 \theoremstyle{plain}
 \newtheorem{thm}{Theorem}[section]
 \newtheorem{defi}[thm]{Definition}
 \newtheorem{prop}[thm]{Proposition}
 \newtheorem{lem}[thm]{Lemma}
 \newtheorem{coro}[thm]{Corollary}
 \theoremstyle{remark}
 \newtheorem{rem}[thm]{Remark}
 \newtheorem{ex}[thm]{Example}
\author{Sylvain Gaulhiac \thanks{Institut de Mathématiques de Jussieu-Paris Rive Gauche, Sorbonne-Université, Paris, France}}
\title{Towards tempered anabelian behaviour of Berkovich annuli }
\date{}
 \font\bf= cmbx10 at 10pt
 \newcommand{\ds}{\displaystyle}
 \newcommand{\A}{\mathbb A}
 \newcommand{\C}{\mathbb{C}}
 \newcommand{\G}{\mathbb{G}}
 \newcommand{\N}{\mathbb{N}}
 \newcommand{\PP}{\mathbb{P}}
 \renewcommand{\P}{\PP}
 \newcommand{\Q}{\mathbb{Q}}
 \newcommand{\R}{\mathbb{R}}
 \newcommand{\Z}{\mathbb{Z}}
\newcommand{\OO}{\mathscr{O}}
 \newcommand{\Aut}{\mathrm{Aut}}
\newcommand{\Gal}{\mathrm{Gal}}
 \newcommand{\Hom}{\mathrm{Hom}}
 \newcommand{\im}{\mathrm{im}}
\newcommand{\res}{\mathrm{res}}
\newcommand{\tp}{\mathrm{temp}}
\newcommand{\an}{\mathrm{an}}
\newcommand{\Cov}{\mathrm{Cov}}
\newcommand{\pp}{\, (p')}
\newcommand{\et}{\mathrm{\acute{e}t}}
\newcommand{\Kum}{\mathsf{Kum}}
\newcommand{\HC}{\mathscr{H}}
\newcommand{\MM}{\mathscr{M}}
 \newcommand{\GG}{\mathcal{G}}
 \newcommand{\CC}{\mathcal{C}}
 \newcommand{\iso}{\xrightarrow{\sim}}
\begin{document}

\maketitle

\renewcommand{\abstractname}{Abstract}
\begin{abstract}
This work brings to light some partial \emph{anabelian behaviours} of analytic annuli in the context of Berkovich geometry. More specifically, if $k$ is a valued non-archimedean complete field of mixed characteristic which is algebraically closed, and $\CC_1$, $\CC_2$ are two $k$-analytic annuli with isomorphic tempered fundamental group, we show that the lengths of $\CC_1$ and $\CC_2$ cannot be too far from each other. When they are finite, we show that the absolute value of their difference is bounded above with a bound depending only on the residual characteristic $p$.

\bigskip
\textbf{Keywords :} Anabelian geometry, Berkovich spaces, tempered fundamental group, analytic curves, cochain morphism, resolution of non-singularities.
\end{abstract}\

\tableofcontents\

\section*{Introduction}

Anabelian geometry is concerned with the following question : 

\begin{center}
\textit{To what extent is a geometric object determined by its fundamental group?}
\end{center}
It is within the framework of algebraic geometry that Grothendieck gave the first conjectures of anabelian geometry in a famous letter to Faltings in $1983$, where the fundamental group is nothing other than the étale one. 
Some deep results for hyperbolic curves have been obtained by Tamagawa and Mochizuki, answering certain conjectures of Grothendieck. However, almost no results are known for higher dimensions. 

\bigskip
In the context of Berkovich analytic geometry, it is possible to define several "fundamental groups" classifying for instance \emph{topological}, \emph{finite étale} or \emph{étale} (in the sens of \cite{DJg}) coverings. However, the group which seems to best capture anabelian behaviours of analytic spaces over non-archimedian fields is the \emph{tempered fundamental group}, introduced by Yves André in \cite{And}. This group classifies \emph{tempered coverings}, defined as étale coverings which become topological after finite étale base change. Both finite étale and topological coverings are examples of tempered coverings. 

\bigskip
It is Yves André, in \cite{And1}, who obtains for the first time some results of anabelian nature related to the tempered fundamental group. A few years later, a huge step was made in this direction with some results of Shinichi Mochizuki (\cite{M3}) followed by Emmanuel Lepage (\cite{Lep$1$} and \cite{Lep$2$}). These results relate the fundamental tempered group of the analytification of an algebraic hyperbolic curve to the dual graph of its stable reduction. If $X$ is a hyperbolic curve defined over some non-archimedian complete field $k$, the homotopy type of its analytification $X^\an$ can be described it terms of the stable model $\mathscr{X}$ of $X$. More precisely, if $\mathscr{X}_s$ stands for the special fibre of $\mathscr{X}$, the \emph{dual graph of the stable reduction} of $X$, denoted $\G_X$, is the finite graph whose vertices are the irreducible components of $\mathscr{X}_s$, and whose edges corresponds to the nodes (singularities in ordinary double points) between irreducible components. If $\overline{X}$ denotes the normal compactification of $X$, a cusp of $X$ is an element of $\overline{X}\setminus X$. Let us denote by $\G_X^\mathtt{c}$ the graph obtained from $\G_X$, adding one open edge to each cusp of $X$, call the \emph{extended dual graph of the stable reduction} of $X$. There exists a canonical topological embedding $\G_X^\mathtt{c}\hookrightarrow X^\an$ which admits a topologically proper deformation retraction $X^\an\twoheadrightarrow \G_X^\mathtt{c}$, thus $X^\an$ and $\G_X^\mathtt{c}$ have the same homotopy type.

\bigskip
Using the language of \emph{semi-graphs of anabelioids} and \emph{temperoids} introduced in high generality in \cite{M2} and \cite{M3}, Mochizuki proves in \cite{M3} that the fundamental tempered group of the analytification of a hyperbolic curve determines the dual graph of its stable reduction :

\begin{thm}[\cite{M3}, Corollary $3.11$]\label{théorème de Mochizuki}Let $X_1$ and $X_2$ be two hyperbolic curves over $\C_p$. Any outer isomorphism of groups $\varphi : \pi_1^\tp(X_1^\an) \iso \pi_1^\tp(X_2^\an)$ determines, functorially in $\varphi$, a unique isomorphism of graphs : $\overline{\varphi} : \G_{X_1}^\mathtt{c}\iso \G_{X_2}^\mathtt{c}$.

\end{thm}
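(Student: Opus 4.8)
The plan is to reconstruct $\G_X^\mathtt{c}$ not from $X^\an$ directly but from the \emph{semi-graph of anabelioids} $\GG_X$ attached to the stable model $\mathscr{X}$ of $X$: its underlying semi-graph is precisely $\G_X^\mathtt{c}$, each vertex (an irreducible component of the special fibre $\mathscr{X}_s$) carrying the Galois category of its finite coverings tamely ramified over the nodes and cusps it meets, each closed edge (a node) carrying a procyclic group (the tame inertia of the node), these data being glued along $\G_X^\mathtt{c}$. The first step is the "semistable reduction" input: the tempered fundamental group of $X^\an$ is canonically isomorphic to the tempered fundamental group $\pi_1^\tp(\GG_X)$ of this semi-graph of anabelioids --- equivalently, the universal tempered covering of $X^\an$ is built by combining the pro-(prime-to-$p$) coverings of the components with the $\Z$-coverings forced by the cycles of $\G_X^\mathtt{c}$. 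One is thereby reduced to a purely group-theoretic assertion: the isomorphism class of the underlying semi-graph $\G_X^\mathtt{c}$ is recoverable from the abstract topological group $\Pi := \pi_1^\tp(\GG_X)$, functorially in $\Pi$.

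\medskip
To see this I would pass to the tower of all finite étale coverings of $\GG_X$; their underlying semi-graphs form an inverse system whose limit is a pro-tree $\widetilde{\G}$ carrying an action of $\Pi$, with $\G_X^\mathtt{c}\cong \Pi\backslash\widetilde{\G}$, and such that the stabilizers of the vertices (resp. of the edges) of $\widetilde{\G}$ are exactly the \emph{verticial} (resp. \emph{edge-like}) subgroups of $\Pi$ --- the $\Pi$-conjugates of the (tame) fundamental groups of the components (resp. of the inertia subgroups of the nodes). The incidence relation of $\G_X^\mathtt{c}$ then becomes purely group-theoretic: a vertex lies on an edge iff a suitable conjugate of the corresponding verticial subgroup contains the corresponding edge-like subgroup. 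Hence $\G_X^\mathtt{c}$ is entirely determined once one has an \emph{intrinsic} characterization, in terms of $\Pi$ as a topological group, of the conjugacy classes of verticial and of edge-like subgroups. Granting such a characterization, an arbitrary isomorphism $\varphi:\pi_1^\tp(X_1^\an)\iso\pi_1^\tp(X_2^\an)$ automatically carries verticial to verticial and edge-like to edge-like subgroups, hence induces a bijection on vertices and on edges compatible with incidence, i.e. the sought $\overline\varphi:\G_{X_1}^\mathtt{c}\iso\G_{X_2}^\mathtt{c}$; since the construction is phrased solely through $\Pi$ it is functorial in $\varphi$, and uniqueness is immediate because a morphism of graphs is determined by its effect on vertices and edges.

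\medskip
The technical heart --- and the main obstacle --- is precisely this intrinsic characterization of verticial and edge-like subgroups, and it is here that the tempered (rather than profinite) setting and the hyperbolicity of the components are essential. On the one hand, topological coverings make the cycles of $\G_X^\mathtt{c}$ visible as $\Z$-quotients of open subgroups of $\Pi$, which isolates the "edge-like direction"; on the other hand, the tame fundamental group of a component, being that of an affine hyperbolic curve over an algebraically closed field, is itself rigid enough that its inertia (cuspidal and nodal) subgroups are group-theoretically recognizable inside it. One then characterizes edge-like subgroups as, essentially, the maximal virtually procyclic subgroups that are commensurably terminal in $\Pi$ and sit across two branches at vertices --- the Tate-curve type self-nodes requiring a separate but analogous treatment --- and verticial subgroups as the maximal commensurably terminal, non virtually abelian closed subgroups arising as such. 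Turning these heuristics into proofs --- in particular keeping the mixed-characteristic bookkeeping straight by working consistently with tame / prime-to-$p$ levels, and promoting assertions about one finite étale layer to the full tempered tower via commensurator arguments --- is exactly the content of \cite{M3}, to which I would refer for the remaining details.
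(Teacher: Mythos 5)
The paper does not prove this theorem: it quotes it as Corollary~$3.11$ of \cite{M3} and builds on it, so there is no ``paper's own proof'' to compare against. That said, the paper's surrounding discussion (and the parallel story in \cite{Gau} for the analytic skeleton, sketched in Section~$1.4$) does outline the reconstruction scheme, and your sketch is broadly faithful to Mochizuki's approach: pass to the semi-graph of anabelioids attached to the stable model, realize the graph as a quotient of a pro-tree on which the group acts, identify vertex and edge stabilizers with verticial and edge-like subgroups, and then give a purely group-theoretic characterization of those subgroups so that any outer isomorphism must preserve them.

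Two points where your account diverges from (or blurs) the actual route. First, the isomorphism you invoke at the outset should be phrased at the \emph{prime-to-$p$} level: it is $\pi_1^{\tp,\pp}(X^\an)$, not the full $\pi_1^\tp(X^\an)$, which is identified with the tempered fundamental group of the (prime-to-$p$) semi-graph of anabelioids $\GG_X$; the full tempered group carries wild ramification data invisible to $\GG_X$. The necessary first step --- which the paper states explicitly in Remark~\ref{pi1t} --- is that $\pi_1^{\tp,\pp}$ is group-theoretically recoverable from $\pi_1^\tp$ as the inverse limit of quotients admitting a torsion-free normal open subgroup of index prime to $p$. You allude to ``working consistently with tame / prime-to-$p$ levels'' at the end, but in your opening paragraph you present the identification at the full tempered level, which as stated is false. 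Second, the group-theoretic tag Mochizuki (and \cite{Gau} in the analytic setting) actually uses for verticial subgroups is \emph{maximal compactness} in $\pi_1^{\tp,\pp}$, with edge-like (vicinal) subgroups recovered as the non-trivial intersections of two maximal compact subgroups --- this is precisely what makes the tempered (as opposed to profinite) framework essential, since compactness in the tempered topology singles out the ``component direction'' against the discrete topological-covering direction. Your proposal instead leans on commensurable terminality together with virtual-(pro)cyclicity / non-virtual-abelianness, which is a Mochizuki-style criterion from the profinite anabelian toolbox but not the one doing the work here. The two are related (maximal compact subgroups are commensurably terminal), but the compactness criterion is both cleaner and the one that feeds directly into the rest of the paper, where the reconstruction of $S^\an(X)^\natural$ is phrased exactly in those terms.
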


Mochizuki shows more precisely that it is possible to reconstruct the graph of the stable reduction $\G_X$ of a hyperbolic curve $X$ from a $(p')$-version $\pi_1^{\tp,\pp}(X^\an)$ of the tempered fundamental group.

\bigskip
A few years later, Emmanuel Lepage refined this result. He proved that the knowledge of the tempered fundamental group of the analytification of a hyperbolic curve $X$ enables to not only reconstruct the graph $\G_X$, but also, in some cases, its canonical metric. This metric is such that the length of an edge corresponding to a node is the width of the annulus corresponding to the generic fibre of the formal completion on this node. It is, however, necessary to restrict this to \emph{Mumford curves}, which are defined as proper algebraic curves $X$ over $\C_p$ such that the normalized irreducible components of the stable reduction are isomorphic to $\P^1$. This is equivalent to saying in Berkovich language that the analytification $X^\an$ is locally isomorphic to open subsets of $\P^{1,\an}$, or that $X^\an$ does not contains any point of genus $>0$.

\begin{thm}[\cite{Lep$2$}]\label{théorème de Lepage}
Let $X_1$ and $X_2$ be two hyperbolic Mumford curves over $\C_p$, and $\varphi : \pi_1^\tp(X_1^\an) \iso \pi_1^\tp(X_2^\an)$ an isomorphism of groups. Then the isomorphism of graphs $\overline{\varphi} : \G_{X_1}\iso \G_{X_2}$ is an isomorphism of metric graphs.
\end{thm}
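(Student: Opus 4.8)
\bigskip
\noindent\emph{Strategy of proof.} The plan is to enhance the graph isomorphism supplied by Theorem~\ref{théorème de Mochizuki} into an isometry, by reconstructing the length of each edge of the dual graph in a purely group-theoretic way, functorially in $\varphi$. Since $\overline\varphi$ is already an isomorphism of graphs, it is enough to prove that $\ell(\overline\varphi(e)) = \ell(e)$ for every edge $e$ of $\G_{X_1}$.

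First I would extract the refined content of Mochizuki's theorem: $\varphi$ respects the combinatorial data attached to $\pi_1^\tp$, matching verticial subgroups with verticial subgroups and nodal (edge-like) subgroups with nodal subgroups, compatibly with $\overline\varphi$; and, the reconstruction being functorial, $\varphi$ induces a compatible isomorphism of the inverse systems of dual graphs $\{\G_{Y}\}$ indexed by the finite étale covers $Y \to X_i$, together with their transition maps. In particular, to an edge $e$ of $\G_{X_1}$ — geometrically the skeleton of the open annulus $\CC_e \hookrightarrow X_1^\an$ attached to the corresponding node — is canonically attached a conjugacy class of subgroups of $\pi_1^\tp(X_1^\an)$ that $\varphi$ carries to that of $\overline\varphi(e)$.

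Next I would reduce the statement to recovering the lengths of loops. An edge that is a bridge lies on no loop of $\G_X$, but two of its preimages join into a loop after a suitable finite étale cover, so by the functoriality above it suffices to reconstruct $\ell(\gamma)$ for every loop $\gamma$ of $\G_Y$, over all finite étale $Y \to X$. Such a loop corresponds to a conjugacy class of elements of the dense discrete subgroup $\pi_1^{\mathrm{top}}(Y^\an) \subset \pi_1^\tp(Y^\an)$, again visible from $\varphi$. Here the Mumford hypothesis is crucial: $Y^\an$ is uniformised, in Mumford's fashion, as $\Omega/\Gamma$ with $\Gamma$ a Schottky subgroup of $\PGL_2$; the element of $\Gamma$ attached to $\gamma$ is hyperbolic, and $\ell(\gamma)$ is the valuation of its multiplier. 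To read off this number I would use the cyclic covers of $Y$ built from a theta function with a simple zero and a simple pole straddling $\gamma$: the combinatorial type of the dual graph of the degree-$n$ cover of this kind — which new vertices appear, with which ramification indices and which winding along $\gamma$ — depends on $\ell(\gamma)$, and passing to the inverse limit over $n$ recovers $\ell(\gamma)$ as a genuine real number, not merely its class modulo the value group. Equivalently, one encodes the lengths by a cochain on $\G_X$ whose reconstructibility from $\pi_1^\tp$ is what has to be established.

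The ingredient that makes this numerical extraction possible — and confines the theorem to Mumford curves — is Lepage's \emph{resolution of non-singularities} for Mumford curves, the analytic analogue of resolving singularities by passing to covers: it controls the behaviour of type-$2$ points in towers of finite étale covers and guarantees that the points relevant above, in particular those forced by the theta-function covers, are genuinely visible in the reconstructed system $\{\G_Y\}$, so that the combinatorial data listed above faithfully records $\ell(\gamma)$. I expect this to be the main obstacle: resolution of non-singularities is a deep statement — it is false, or at least unknown, for general hyperbolic curves — and its proof, producing a finite étale cover that opens up a prescribed point, is the technical heart of the argument. A secondary, intertwined difficulty is the bookkeeping needed to turn the functorially reconstructed combinatorial tower into an honest element of $\R$, i.e. checking that the transition maps of graphs, which may simultaneously rescale edge lengths and wind loops, retain enough information to pin down $\ell(\gamma)$, hence each $\ell(e)$, exactly.
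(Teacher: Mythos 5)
This statement is quoted from Lepage's paper \cite{Lep$2$} and is not proved in the present article, so there is no in-paper proof to compare against directly. The paper does, however, describe the shape of Lepage's argument (see the introduction, and the use of \cite{Lep$2$} in the proof of Lemma~\ref{existence du morphisme de cochaîne}), and your proposal diverges from it in two substantive ways.

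First, the machinery. Lepage's proof, as reported here, is a \emph{cochain} argument centred on $\mu_{p^h}$-torsors: one builds the harmonic-cochain morphism $\theta: H^1(X,\mu_n)\to \mathrm{Harm}(\G,\Z/n\Z)$ and shows its image fills $\mathrm{Harm}(\G^{\natural},\Z/n\Z)$, then recovers the length of an edge from the splitting behaviour of $\mu_{p^h}$-torsors with prescribed cochain, passing to the limit over $h$. The crucial point is that it is specifically the \emph{$p$-power} cyclic torsors that see length: as Proposition~\ref{décomposition du torseur sauvage} and Proposition~\ref{minimalité des rayons de déploiement} make explicit, the splitting radius of a $\mu_{p^h}$-torsor jumps at the threshold $p^{-\frac{p}{p-1}}$ because of wild ramification; tame $\mu_n$-torsors with $(n,p)=1$ only rescale the annulus by $1/n$ and carry no length information by themselves. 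Your sketch speaks of ``cyclic covers of degree $n$'' and ``passing to the inverse limit over $n$'' without isolating the prime-to-$p$/power-of-$p$ dichotomy, and the Schottky-and-theta-function viewpoint you substitute (while natural for Mumford curves) is a different technical route from the harmonic-cochain one you would actually need to reconcile with $\pi_1^\tp$.

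Second, and more seriously, you identify ``Lepage's resolution of non-singularities for Mumford curves'' as \emph{the} ingredient that makes the reconstruction possible and confines the theorem to Mumford curves. This is anachronistic: resolution of non-singularities is Theorem $2.6$ of \cite{Lep$3$} (2013), which postdates \cite{Lep$2$}; the theorem you are asked to prove was established without it. In \cite{Lep$2$} the Mumford hypothesis enters for a different reason: it guarantees (via the genus-$0$ components of the stable reduction and the resulting control on Picard groups and on the cochain morphism) that $\theta$ surjects onto $\mathrm{Harm}(\G^{\natural},\Z/n\Z)$ and that $\ker(\theta)$ for $\mu_{p^h}$-torsors can be characterised group-theoretically. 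It is the \emph{present} paper that brings resolution of non-singularities to bear on the problem (for annuli, in Section~\ref{section résolution des non-singularités}), precisely because the cleaner cochain-based detection available for Mumford curves breaks down in that setting. As written, your proposal therefore both misattributes the key lemma and omits the wild-ramification mechanism ($\mu_{p^h}$, threshold at $\frac{p}{p-1}$) that actually converts combinatorics into real lengths.
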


These two results deal with analytic curves which are of \emph{algebraic nature}, that is, analytifications of algebraic curves. Yet the theory of Berkovich analytic spaces is rich enough to contain lots of curves which are of \emph{analytic nature} without coming from algebraic curves. The most important examples of such curves, which are still very simple to define, are \emph{disks} and \emph{annuli}. In the wake of Mochizuki's and Lepage's results, one wonders whether similar anabelian results exist for more general analytic curves without imposing any algebraic nature. For such analytic curves, the generalisation of Mochizuki's results was carried out in the article \cite{Gau}, whereas the investigation about some analogous of Lepage's result is partially answered in this present article.

\bigskip

\textbf{Reconstruction of the analytic skeleton} For a quasi-smooth analytic curve $X$, the good analogous of the extended dual graph of the stable reduction is the \emph{analytic skeleton} $S^\an(X)$, defined in \ref{definition analytic skeleton}. When the skeleton meets all the connected components of $X$, there exists a canonical topological embedding $S^\an(X)\hookrightarrow X$ which admits a topologically proper deformation retraction $X\twoheadrightarrow S^\an(X)$. Therefore $X$ and $S^\an(X)$ have the same homotopy type. The restriction $S^\an(X)^\natural$ obtained from the skeleton by removing non-relatively compact edges is called the \emph{truncated skeleton} of $X$ (see \ref{definition truncated skeleton}), and is the analogous of the dual graph of the stable reduction. Let $k$ be a complete algebraically closed non-archimedean field of residual exponent $p$. In \cite{Gau}, $3.29$, is defined a certain of class of $k$-analytic curves, called \emph{$k$-analytically hyperbolic}. Their interest lies in the fact that for a $k$-analytically hyperbolic curve $X$ it is possible to reconstruct its truncated skeleton $S^\an(X)^\natural$ from the tempered group $\pi_1^\tp(X)$, or even from a \emph{prime-to-$p$} version $\pi_1^{\tp, \pp}(X)$, obtained by taking the projective limit of all quotients of $\pi_1^\tp(X)$ admitting a normal torsion-free subgroup of finite index prime to $p$. The reconstruction of $S^\an(X)^\natural$ from this group is given by the following : 
\begin{itemize}
\item[•]the vertices correspond to the conjugacy classes of maximal compact subgroups of $\pi_1^{\tp, \pp}(X)$;
\item[•]the edges correspond to the conjugacy classes of non-trivial intersections of two maximal compact subgroups of $\pi_1^{\tp, \pp}(X)$.
\end{itemize}

The condition for a quasi-smooth $k$-analytic curve to be analytically hyperbolic is stated in terms of non-emptiness of the sets of nodes of the skeleton and some combinatorial hyperbolic condition at each of these nodes. However, the analytical hyperbolicity may not be enough to recover all the skeleton. In order to recover also the non relatively compact edges of $S^\an(X)$ is defined in \cite{Gau}, $3.55$, a sub-class of $k$-analytically hyperbolic curves called \emph{$k$-analytically anabelian}. A $k$-analytically anabelian curve is a $k$-analytically hyperbolic curve satisfying a technical condition called \emph{ascendance vicinale}, which enables us to reconstruct open edges of the skeleton :

\begin{thm}[\cite{Gau}, 3.56]\label{théorème principal}
Let $X_1$ and $X_2$ be two $k$-analytically anabelian curves. Any group isomorphism $\varphi : \pi_1^\tp(X_1)\iso \pi_1^\tp(X_2)$ induces (functorially in $\varphi$) an isomorphism of semi-graphs between the analytic skeletons : $S^\an(X_1)\iso S^\an(X_2)$.

\end{thm}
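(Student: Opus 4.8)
The plan is to deduce the statement from the group-theoretic reconstruction of the \emph{truncated} skeleton already available for $k$-analytically hyperbolic curves, and then to use the \emph{ascendance vicinale} hypothesis to recover in addition the open (non-relatively compact) edges. First I would replace $\pi_1^\tp$ by its prime-to-$p$ version: since $\pi_1^{\tp,\pp}(X)$ is the projective limit of those quotients of $\pi_1^\tp(X)$ that admit a torsion-free normal subgroup of finite index prime to $p$ --- a condition phrased entirely in terms of the abstract topological group --- the isomorphism $\varphi$ carries this projective system onto the one attached to $X_2$, hence induces an isomorphism $\varphi^{(p')}\colon \pi_1^{\tp,\pp}(X_1)\iso \pi_1^{\tp,\pp}(X_2)$, and the assignment $\varphi\mapsto\varphi^{(p')}$ is functorial by construction.

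Next, because $X_1$ and $X_2$ are in particular $k$-analytically hyperbolic, I would invoke the reconstruction recalled above: the vertices of $S^\an(X_i)^\natural$ are the conjugacy classes of maximal compact subgroups of $\pi_1^{\tp,\pp}(X_i)$, its edges the conjugacy classes of non-trivial intersections of two such subgroups, and the incidence between an edge and a vertex is read off from inclusion of the corresponding subgroups up to conjugacy. Since being a maximal compact subgroup, conjugacy and these inclusions are all preserved by the isomorphism $\varphi^{(p')}$, we obtain an isomorphism of semi-graphs $S^\an(X_1)^\natural \iso S^\an(X_2)^\natural$, again functorial in $\varphi$.

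It remains to promote this to an isomorphism of the full analytic skeletons by matching the open edges, and this --- where $k$-analytic anabelianity, i.e. hyperbolicity together with ascendance vicinale, is used --- is the main obstacle. The compact-subgroup combinatorics only sees the relatively compact part of the skeleton, so the open edges emanating from a node $v$ must be detected by a finer local analysis near $v$: the branches at $v$ are encoded by suitable subgroups of a localization at $v$ of $\pi_1^{\tp,\pp}$, the branches shared with a neighbouring node are precisely the closed edges already recovered, and the ascendance vicinale property is exactly what ensures that the remaining branches --- the open edges --- are themselves canonically characterized group-theoretically (morally, as the boundary directions along which a cofinal family of tempered coverings can be found trivializing the corresponding vicinal loop, so that the relevant inertia at infinity becomes visible). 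Granting this, $\varphi^{(p')}$ induces a bijection between the open edges of $S^\an(X_1)$ and those of $S^\an(X_2)$ compatible with their attachment to vertices; combined with the previous paragraph this yields the desired isomorphism $S^\an(X_1)\iso S^\an(X_2)$, functorial in $\varphi$ since every intermediate step is. The genuinely hard point is to show that ascendance vicinale really renders the open edges group-theoretically visible: this requires analysing tempered coverings of a half-open annular (respectively punctured-disk-like) neighbourhood of a boundary direction and proving that the combinatorial and monodromy data such coverings carry determine that direction intrinsically, uniformly over all nodes of the skeleton.
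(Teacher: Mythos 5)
The statement you are asked to prove is not actually proved in this paper: it is cited as a result from \cite{Gau}, Theorem $3.56$, a companion paper by the same author, and the present text only recalls the statement together with an informal description of the strategy (reconstruct the truncated skeleton from maximal compact subgroups of $\pi_1^{\tp,\pp}$, then use \emph{ascendance vicinale} for the open edges). Your first two steps --- passage to the prime-to-$p$ tempered quotient via the intrinsic characterization as a projective limit of quotients with torsion-free normal subgroup of finite index prime to $p$, and reconstruction of $S^\an(X)^\natural$ from conjugacy classes of maximal compact subgroups and of their pairwise intersections --- are exactly what the paper says is done in \cite{Gau}, and they are reasonable at that level of detail.

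The genuine gap is in the third step, and you flag it yourself. You never state what \emph{ascendance vicinale} actually is, and you never carry out the argument that this condition renders the open edges group-theoretically visible. The sentence beginning ``Granting this, $\varphi^{(p')}$ induces a bijection between the open edges\ldots'' grants precisely the content of the theorem beyond the truncated skeleton already handled by hyperbolicity, so your proposal circles back to asserting what it set out to establish. A real proof would need to (i) recall the definition of ascendance vicinale from \cite{Gau}, $3.55$; (ii) show that, under this hypothesis, the cuspidal subgroups $\pi_e$ attached to open edges, together with their incidences to the verticial subgroups $\pi_v$, can be characterized intrinsically inside $\pi_1^{\tp,\pp}(X)$ and distinguished from the vicinal subgroups already used; and (iii) verify functoriality of the resulting bijection. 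The purely ``moral'' paragraph about boundary directions, inertia at infinity, and cofinal families of coverings is a gloss, not an argument. As written you have reformulated the statement with a plausible-sounding mechanism attached, but you have not proved it.
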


\textbf{Anabelianity of length ? } This present article concentrates more on the potential anabelianity of lengths of edges of the skeleton of a $k$-analytic curve, inspired from the result of Lepage cited above. There is a natural way to define the length of an analytic annulus (see \ref{length of annuli}), invariant by automorphisms, which makes the skeleton $S^\an(X)$ of a quasi-smooth $k$-analytic curve $X$ a \emph{metric} graph. The question naturally arising is the following : 

\begin{center}
\textit{Does the tempered fundamental group $\pi_1^\tp(X)$ of a $k$-analytically anabelian curve $X$ determine $S^\an(X)$ as a metric graph?}
\end{center}

Before tackling the general case, it seems \textit{a priori} more simple to study first the case of a $k$-analytic annulus, even if this latter is not a $k$-analytically anabelian curve. The $\pp$-tempered group $\pi_1^{\tp, \pp}(\CC)$ of an annulus is always isomorphic to the $p'$-profinite completion $\widehat{\Z}^{\pp}$ of $\Z$, but its total tempered group $\pi_1^\tp(\CC)$ depends on its length whenever $k$ has mixed characteristic. The new question arising is the following : 

\begin{center}
\textit{Does the tempered group $\pi_1^\tp(\CC)$ of a $k$-analytic annulus $\CC$ determine its length?}
\end{center}

In order to investigate this question, one is tempted to follow the scheme of proof that Lepage develops in \cite{Lep$2$}. An idea would be to start from an "ovoid" $\mu_p$-covering of the annulus totally split at the middle of the skeleton, which would be analytically anabelian. Then knowing how to compute the length of any cycle would be enough to know the length of the annulus (by a limit argument). Yet one quickly faces problems of analytic nature that do not appear with Mumford curves : problems of detection of $\mu_{p^h}$-torsors with trivial $\Z/p^h\Z$-cochain. Indeed, if $Y\to X$ is a $\mu_n$-torsor, associating to some edge $e$ of $S^\an(X)$ the growth rate of any analytic function defining locally this torsor over $e$ leads to a harmonic cochain on the graph $S^\an(X)$ with values in $\Z/n\Z$. This growth rate corresponds to the degree of the strictly dominant monomial (see remark \ref{remarque conditions d'inversibilité}) of the corresponding analytic function. Therefore, when $X$ is a quasi-smooth $k$-analytic curve, we show in lemma \ref{existence du morphisme de cochaîne} that there exists a cochain morphism $\theta : H^1(X,\mu_n)\to \mathrm{Harm}(S^\an(X), \Z/n\Z)$ for any $n\in \N^\times$. However, when $n=p^h$ with $h>1$, it seems difficult to detect the kernel of $\theta$ from $\pi_1^\tp(X)$, which makes the hoped scheme of proof illusory. Nevertheless, the detection of $\ker(\theta)$ when $n=p$ is possible in some cases : 

\bigskip
\textbf{Théorème 0}
\textit{Let $X$ be a $k$-analytic  curve satisfying one of the two following conditions :}
\begin{enumerate}
\item \textit{$X$ is an annulus }
\item \textit{$X$ is a $k$-analytically hyperbolic curve, of finite skeleton without bridge, without any point of genus $>0$, without boundary, with only annular cusps, and such that there is never strictly more than one cusp coming from each node.}
\end{enumerate}

\textit{Then the set of $\mu_p$-torsors of $X$ with trivial $\Z/p\Z$-cochain, i.e. the $H^1(X,\mu_p)\cap \ker(\theta)$, is completely determined by $\pi_1^\tp(X)$.}\\

This result uses \emph{resolution of non-singularities} (section \ref{section résolution des non-singularités}) coupled with a characterisation of non-triviality of cochains in terms of minimality of splitting radius in rigid points (proposition \ref{minimalité des rayons de déploiement}). This characterisation can be rephrased set-theoretically with the splitting sets of torsors (corollary \ref{corollaire, détection de la nullité de la cochaîne sur une couronne}), that can themself be characterised from the tempered group by means of solvability of some "threshold" points (lemma \ref{lemme de résolubilité des points seuils}). \\

As for the initial question about the potential anabelianity of lengths of annuli, we found a partial answer, using the solvability of skeletons of annuli (lemma \ref{lemme de résolubilité du squelette d'une couronne}) doubled with some considerations of splitting sets of $\mu_p$-torsors :

\bigskip
\textbf{Théorème 1 : }
\textit{Let $\mathcal{C}_1$ and $\mathcal{C}_2$ be two $k$-analytic annuli whose tempered fundamental groups $\pi_1^\tp(\mathcal{C}_1)$ and $\pi_1^\tp(\mathcal{C}_2)$ are isomorphic. Then $\mathcal{C}_1$ has finite length if and only if $\mathcal{C}_2$ has finite length. In this case : 
$$\vert\ell(\mathcal{C}_1)-\ell(\mathcal{C}_2)\vert < \frac{2p}{p-1}.$$ We also have $d\left( \ell(\mathcal{C}_1),p\N^\times    \right)>1$ if and only if $d\left( \ell(\mathcal{C}_2),p\N^\times    \right)>1$, and in this case : $$\vert\ell(\mathcal{C}_1)-\ell(\mathcal{C}_2)\vert < \frac{p}{p-1}.$$}

\section{Berkovich analytic curves}

In all this text $k$ will denote a complete algebraically closed non-archimedean field of mixed characteristic $(0,p)$, i.e. $\mathrm{char}(k)=0$ and $\mathrm{char}(\widetilde{k})=p$, where $\widetilde{k}$ is the residue field of $k$. Let's assume that the absolute value on $k$ is normalized such that $\vert p\vert=p^{-1}$. The field $\C_p:=\widehat{\overline{\Q_p}}$ with the usual $p$-adic absolute value is an example of such field.

\subsection{Points and skeleton of an analytic curve}

A $k$-analytic curve is defined as a separated $k$-analytic space of pure dimension $1$. We send the reader to the fundational text \cite{Ber1} for references about analytic space, \cite{Ber2} for the cohomology on analytic spaces, and \cite{Duc} for a precise and systematic study of analytic curves. 

\bigskip
Let's recall here some properties about analytic curves which will be important in this text.\

Any $k$-analytic curve, endowed with the Berkovich topology, has very nice topological properties : locally compact, locally arcwise connected, and locally contractible, which makes possible to apply to it the usual theory of universal topological covering. Moreover, $k$-analytic curves are \emph{real graphs}, with potentially infinite branching, as stated by the following proposition : 

\begin{prop}[\cite{Duc}, $3.5.1$]\label{allure graphes reels}
Let $X$ be a non-empty connected $k$-analytic curve. The following statements are equivalent : \

\begin{itemize}
\item[i)] the topological space $X$ is contractible,
\item[ii)] the topological space $X$ is simply connected, 
\item[iii)] for any pair $(x,y)$ of points of $X$, there exists a unique closed subspace of $X$ homeomorphic to a compact interval with extremities $x$ and $y$. 
\end{itemize}
Moreover, any point of a $k$-analytic curve admits a basis of neighbourhood which are \emph{real trees}, i.e. satisfying the equivalent properties above. 
\end{prop}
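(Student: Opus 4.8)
The plan is to reduce the statement to the elementary topology of real graphs, via the local and global structure theory of $k$-analytic curves. I would first treat the last assertion, which also furnishes the tools used afterwards. By the local description of $k$-analytic curves, every point $x\in X$ has a cofinal family of open neighbourhoods $V$ each of which is a real tree: concretely, $V$ is an open disc, an open annulus, or a union of open discs and open annuli glued along a single common point (possibly with infinite branching at that point). For each such $V$ one checks directly that two of its points are joined by exactly one arc, that this arc is a finite concatenation of closed intervals --- hence homeomorphic to a compact interval --- and that $V$ retracts by deformation onto a point; so $V$ satisfies conditions (i) and (iii). This proves the ``moreover'' part, and it records in particular that $X$ is locally arcwise connected, hence, being connected, arcwise connected.

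The global ingredient is that a connected $k$-analytic curve $X$ admits a triangulation, and therefore a topologically proper deformation retraction $\rho\colon X\twoheadrightarrow\Gamma$ onto the underlying \emph{locally finite} real graph $\Gamma$ of that triangulation. Here $\Gamma$ is a closed subspace of $X$ with $\rho|_\Gamma=\mathrm{id}$; the complement $X\setminus\Gamma$ is a disjoint union of open discs $(D_i)_i$, the closure of each $D_i$ meeting $\Gamma$ in a single point; and $\rho$ collapses each $\overline{D_i}$ onto that point. Thus $X$ is homotopy equivalent to $\Gamma$, the graph $\Gamma$ is connected, and $X$ is $\Gamma$ with a locally finite family of real trees attached at isolated points.

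I would then run the equivalences as a cycle. That (i) implies (ii) is formal. For (iii)$\Rightarrow$(i): a uniquely arcwise connected space contains no topologically embedded circle, since two distinct points of such a circle would be joined by two distinct arcs; in particular the subgraph $\Gamma\subseteq X$ contains no embedded circle, so, being connected and locally finite, $\Gamma$ is a tree, hence contractible, and therefore $X\simeq\Gamma$ is contractible. For (ii)$\Rightarrow$(iii): $\pi_1(X)=1$ forces $\pi_1(\Gamma)=1$, so the connected graph $\Gamma$ is a tree, hence uniquely arcwise connected; attaching real trees to a tree along points preserves this property (an embedded circle in $X=\Gamma\cup\bigcup_i\overline{D_i}$ would be compact, meet only finitely many of the $\overline{D_i}$ and a compact sub-tree of $\Gamma$, and could not close up); arcs exist in $X$ because it is arcwise connected (a path between two points is covered by finitely many of the tree-neighbourhoods of the first paragraph, straightened inside each, and pruned to an embedded arc); and such an arc, being a finite concatenation of closed sub-arcs, is homeomorphic to a compact interval and is visibly the unique closed subspace of $X$ with the prescribed extremities. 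This closes the cycle (i)$\Rightarrow$(ii)$\Rightarrow$(iii)$\Rightarrow$(i).

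The main obstacle is the structural input of the second paragraph: the existence of a triangulation of $X$ together with the associated proper deformation retraction onto a locally finite graph whose complementary components are open discs. This, together with the local description of curves used in the first paragraph, is exactly what the systematic study of \cite{Duc} provides, and it is what makes it legitimate to replace $X$ --- for every question about arcs, simple connectedness and contractibility --- by a genuine locally finite real graph. Everything after that is formal topology of graphs and of real trees.
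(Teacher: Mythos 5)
This proposition is quoted in the paper directly from \cite{Duc}, $3.5.1$, with no proof given, so there is no in-paper argument to compare against; your proposal is a genuine reconstruction and, as far as I can see, a correct one. It does, however, take a different route from the one the reference itself follows: Ducros establishes $3.5.1$ early, using only the local structure of curves (every point has a basis of neighbourhoods that are discs, annuli, or stars of such glued at one point) together with general point-set topology of ``real graphs'' --- paracompact, locally arcwise connected spaces admitting tree bases --- whereas you invoke the global triangulation theorem and the proper deformation retraction $X\twoheadrightarrow\Gamma$ onto a locally finite graph, which in \cite{Duc} is a much later and much harder result (and, strictly speaking, requires the skeleton to be enlarged so as to meet every component, e.g.\ when $X$ is a disc and $\Gamma$ would otherwise be empty). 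Your route buys a very transparent reduction to the combinatorics of locally finite graphs and makes the homotopy type of $X$ explicit; the cost is that you are proving an elementary-looking statement with the deepest structural input of the theory, and the logical order of \cite{Duc} is inverted (though not circularly, since Ducros's proof of $3.5.1$ does not use triangulations). The two steps of your argument that carry the real weight and deserve to be written out are: (a) the standard Hausdorff-space fact that two distinct arcs with the same endpoints contain an embedded circle, which is what converts ``no embedded circle'' into unique arcwise connectedness in both (ii)$\Rightarrow$(iii) and (iii)$\Rightarrow$(i); and (b) the observation that an embedded circle cannot pass twice through the single attaching point of a complementary disc $\overline{D_i}$, forcing any circle into $\Gamma$. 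Both are correct as sketched.
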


\begin{rem}
Any real tree can be endowed with a topology called \emph{topology of real tree}, which might be different from its initial topology. The Berkovich topology on an open subset of an analytic curve which is a tree is coarser than the topology of real tree on this tree.  
\end{rem}

\bigskip
\textbf{Points of a $k$-analytic curve :} Let $X$ be a $k$-analytic curve, and $x\in X$. If $\mathscr{H}(x)$ denotes the completed residual field of $x$, it is possible to associate to the complete extension $\mathscr{H}(x)/k$ two transcendental values : 
\begin{align*}
f_x &=\mathrm{degtr}_{\widetilde{k}}\widetilde{\mathscr{H}(x)}\\
e_x &=\mathrm{rang}_\Q \left(  \vert \mathscr{H}(x)^\times\vert/\vert k^\times \vert \otimes_\Z \Q \right),
\end{align*} 
which satisfy $f_x+e_x\leqslant 1$ (in accordance with the Abhyankar inequality). The points of $X$ can be classified in 4 types according to the these transcendental values : 
\begin{defi}
A point $x\in X$ is : 
\begin{enumerate}
\item of \emph{type-$1$} if $\mathscr{H}(x)=k$ (in this case $f_x=e_x=0$)
\item of \emph{type $2$} if $f_x=1$
\item of \emph{type $3$} if $e_x=1$
\item of \emph{type $4$} if $f_x=e_x=0$ but $x$ is not of type $1$.
\end{enumerate}

For $i\in\lbrace 1,2,3,4 \rbrace$, let $X_{[i]}$ be the subset of $X$ consisting of type-$i$ points. 
\end{defi}

This definition of type-$1$ points holds here since we assumed that $k$ is algebraically closed. In general, a point $x\in X$ is of type $1$ if $\mathscr{H}(x)\subseteq\widehat{\overline{k}}$, where $\widehat{\overline{k}}$ denotes the completion of an algebraic closure of $k$. Since $k$ is algebraically closed, type-$1$ points are exactly the \emph{rigid} points, i.e. the points $x\in X$ such that the extension $\mathscr{H}(x)/k$ is finite. Since $k$ is by assumption non-trivially valued, $X_{[2]}$ is dense in $X$.\\

\textit{Preservation of type of points by finite morphisms.} If $f : X'\rightarrow X$ is a finite morphism of $k$-analytic curves, for any $i\in \lbrace 1,2,3,4\rbrace $, a point $x'\in X'$ is of type $i$ if and only if $f(x)$ is of type $i$.

\bigskip
One of the specificity of Berkovich geometry compared to ridid geometry is the existence of a \emph{boundary} which is embodied in the space. It is possible to define two boundaries of a $k$-analytic space : the \emph{analytic boundary} $\Gamma(X)$ and the \emph{Shilov boundary} $\partial^\an X$. However, specifically in the dimension $1$ case, i.e. for analytic curves, these two notions coincide, which allows to speak without any ambiguity about the \emph{boundary of $X$} $\partial^\an X\subseteq X$, potentially empty. 

\bigskip
\textbf{Description of the $k$-analytic affine line} $\A_k^{1, \an}$ : The analytification $\A_k^{1, \an}$ of the (algebraic) affine line $\A_k^1$ is the smooth, without boundary and connected $k$-analytic curve whose points are the multiplicative seminorms on the polynomial ring $k[T]$ extending the absolute value of $k$.
We are going to give an explicit description of $\A_k^{1, \an}$. For $r\geqslant 0$ and $a\in k$, let $B(a,r)=\lbrace x\in k, \vert x-a\vert \leqslant r \rbrace$ be the closed ball (which is also open since $k$ is non-archimedean!) of $k$, centred in $a$ and of radius $r$. 

\begin{itemize}
\bigskip
\item[•] Any element $a\in k$ determines a multiplicative seminorm on $k[T]$, the \emph{evaluation at $a$}, given by $P\in k[T]\mapsto \vert P(a)\vert$. It defines an element of $\A_k^{1,\an}$ denoted $\eta_a$, or $\eta_{a, 0}$. Then $\mathscr{H}(\eta_a)=k[T]/(T-a)\simeq k$, such that $\eta_a$ is a rigid point. 

\bigskip
\item[•] Let $a\in k$ et $r>0$. Consider the map : 
$$P\in k[T]\mapsto \sup_{b\in B(a,r)}\vert P(b)\vert=\sup_{b\in B(a,r)}\vert P\vert_{\eta_b}.$$
It actually defines an element of $\A_k^{1,\an}$, denoted $\eta_{a,r}$, and given by : $$\vert P (\eta_{a, r}) \vert=\mathrm{max}_{0\leqslant i \leqslant n}(\vert \alpha_i \vert r^i) \;\;\text{as soon as} \; \;P=\sum_{i=0}^n \alpha_i (T-a)^i.$$
One can verify that $\eta_{a, r}$ only depends on $B(a,r)$ (i.e. $\eta_{a,r}=\eta_{b,r}$ as soon as $b\in B(a,r)$). There are two cases :\

When $r\in \vert k^\times\vert$, $\widetilde{\mathscr{H}(\eta_{a,r})}=\widetilde{k}(T)$, and $\vert\mathscr{H}(\eta_{a,r})^\times\vert=\vert k^\times \vert$, such that $\eta_{a,r}$ is a type-$2$ point.\

When $r\notin \vert k^\times\vert$, $\widetilde{\mathscr{H}(\eta_{a,r})}=\widetilde{k}$, and $\vert\mathscr{H}(\eta_{a,r})^\times\vert$ is the group generated by $\vert k^\times \vert$ and $r$, so $\eta_{a,r}$ is a type-$3$ point.

\bigskip
\item[•]If $\mathscr{B}=\left(B_n\right)_{n\in \N}$ is a decreasing sequence of non-empty closed balls (i.e. $B_{n+1}\subseteq B_n$, for $n\in \N$) of $k$. Let $\vert\cdot\vert_{B_n}$ be the unique point of $\A_k^{1,\an}$ determined by $B_n$ (i.e. $\vert\cdot\vert_{B_n}=\eta_{a_n, r_n}$ as soon as $B_n=B(a_n,r_n)$). Then the map : 
$$P\in k[T]\mapsto \inf_{n\in \N}\vert P\vert_{B_n}$$
defines an element $\vert \cdot \vert_\mathscr{B}  $ of $\A_k^{1,\an}$.

If $\bigcap_n B_n$ is a point $a\in k$, then $\vert \cdot \vert_\mathscr{B}$ corresponds exactly to $\eta_a$. If $\bigcap_n B_n$ is a closed ball centred in $a\in k$ and of radius $r\in \R_+^*$, then $\vert \cdot \vert_\mathscr{B}$ corresponds to $\eta_{a,r}$. It is also possible that $\bigcap_n B_n$ is empty, in this case $\vert \cdot \vert_\mathscr{B}$ is a type-$4$ point.

\end{itemize}

This description is exhaustive : all the points of $\A_k^{1,\an}$ can be described in this way. 

\begin{rem}
Points of type $4$ exist if and only if $k$ is not \emph{spherically complete}. A valued field is \emph{spherically complete} when it does not admit any \emph{immediate} extension. The field $\C_p$ is not spherically complete, therefore there exist in $\A_{\C_p}^{1,\an}$ some type-$4$ points.
\end{rem}

\bigskip
\textbf{The $k$-analytic projective line} $\P_k^{1, \an}$ is the analytification of the algebraic $k$-projective line $\P_k^1$. It is a proper (compact and without boundary) quasi-smooth connected curve. It admits a rigid point $\infty$ such that there exists a natural isomorphism an $k$-analytic curves : $$\rho : \P_k^{1, \an}\setminus \lbrace \infty \rbrace \iso \A_k^{1, \an}$$

The $k$-analytic affine and projective curves are trees (see \ref{allure graphes reels}), so for each pair $(x,y)$ of points, there exists a unique closed subspace homeomorphic to a compact interval (a segment) with extremities $x$ and $y$. If $a$ et $b$ are in $k$, the segment joining the rigid points $\eta_a$ and $\eta_{b}$ is : $$[\eta_a,\eta_b]=\lbrace \eta_{a,r}\rbrace_{0\leqslant r\leqslant \vert b-a\vert} \cup \lbrace \eta_{b,s}\rbrace_{0\leqslant r\leqslant \vert b-a\vert}.$$
The segment joining $\eta_a$ and $\infty$ is $[\eta_a,\infty]=\lbrace \eta_{a,r}\rbrace_{0\leqslant r\leqslant \infty}$, with $\infty=\eta_{a, \infty}$.

\bigskip
The type of points of $\P_k^{1,\an}$ (or $\A_k^{1,\an})$ can be read on the tree :
\begin{itemize}
\item[•] type-$2$ points are the branching points of the tree,
\item[•] type-$3$ points are the points where nothing special happens (their valence is $2$)
\item[•] type-$1$ or type-$4$ points are the unibranched ones on the tree, there are the "leaves". 
\end{itemize}

\bigskip
\textbf{Analytic skeleton of an analytic curve :} The following notion of \emph{analytic skeleton} of a $k$-analytic curve is the analogous in the analytic word of the dual graph of the special fiber of the stable model of an algebraic $k$-curve. 

\bigskip
A \emph{$k$-analytic disk} is a $k$-analytic curve isomorphic to the analytic domain of $\P_{k}^{1,\an}$ defined by the condition $\vert T\vert \in I$, where $I$ is an interval of the form $[0,r[$, $[0,r]$ for some $r>0$, or $I=[0,+\infty[$.

\begin{defi}[Analytic skeleton]\label{definition analytic skeleton}
The \emph{analytic skeleton} of a quasi-smooth $k$-analytic curve $X$, denoted $S^\an(X)$, is the subset of $X$ consisting of points which do not belong to any open $k$-analytic disk. 
\end{defi}

\begin{prop} [see  \cite{Duc}, $1.6.13$, $5.1.11$] Let $X$ be a quasi-smooth $k$-analytic curve :
\begin{itemize}
\item the analytic skeleton $S^\an(X)$ is a locally finite graph contained in $X_{[2,3]}$, and containing the boundary $\partial^\an X$ of $X$.
\item if $S^\an(X)$ meets all the connected components of $X$, there exists a canonical deformation retraction $r_X : X\to S^\an(X)$. In particular, $X$ and $S^\an(X)$ has the homotopy type.
\end{itemize}
\end{prop}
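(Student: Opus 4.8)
The plan is to reduce every assertion to the \emph{local structure theory} of quasi-smooth $k$-analytic curves (developed in \cite{Duc}, chapters~4--5; see also \cite{Ber1}): every point $x\in X$ admits a fundamental system of open neighbourhoods that are \emph{elementary}, i.e.\ finite unions, glued along a single vertex, of open disks and of one ``basic tube'' $\mathscr{T}_x$ --- the generic fibre of a smooth formal curve over the valuation ring of $k$ --- the latter having a \emph{finite} analytic skeleton; moreover, when $x$ is of type $1$ or $4$ this neighbourhood may be taken to be a single open disk. Granting this, I would first note that a rigid (type-$1$) point has an open disk neighbourhood, a quasi-smooth curve being étale-locally the affine line near such a point, and that a type-$4$ point is a leaf of the real tree $X$ and lies in an open disk by the explicit description of $\P^{1,\an}_k$ recalled above; hence no point of type $1$ or $4$ lies in $S^\an(X)$, so $S^\an(X)\subseteq X_{[2,3]}$. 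I would also observe that $S^\an(X)$ is closed in $X$, its complement being by definition a union of open disks, hence open.

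Next I would establish that $S^\an(X)$ is a locally finite graph and that it contains $\partial^\an X$. For the graph structure, fix $x\in S^\an(X)$ --- necessarily of type $2$ or $3$ --- and an elementary neighbourhood $V_x$ of $x$; the crucial point is that $V_x\cap S^\an(X)=S^\an(\mathscr{T}_x)$, the disk components of $V_x$ contributing no point to the skeleton, so that $S^\an(X)$ is, around each of its points, a finite graph with $x$ as a vertex --- which is precisely local finiteness. For the boundary inclusion, recall that a point of $\partial^\an X$ is of type $2$ or $3$ and lies in the Shilov boundary of each of its affinoid neighbourhoods; were such a point to lie in an open disk $D\subseteq X$, then --- an open disk being boundaryless (write $D$ as the increasing union of its closed subdisks $D_s$: every point of $D$ lies in the topological interior of some $D_{s'}$ and is distinct from the Shilov point $\eta_{0,s'}$ of $D_{s'}$, so $\partial(D/k)=\varnothing$) and the analytic boundary being local ($\partial^\an X\cap D=\partial(D/k)$) --- the point would not lie in $\partial^\an X$, a contradiction. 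Hence $\partial^\an X\subseteq S^\an(X)$.

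Finally, assume $S:=S^\an(X)$ meets every connected component of $X$. Each connected component $U$ of $X\setminus S$ is an open disk: being connected and disjoint from $S$, it is a subtree of $X$ assembled out of the disk pieces of elementary neighbourhoods, hence an open disk; and, $S$ meeting the component of $X$ containing $U$, the disk $U$ is attached to $S$ through its unique end, so $\overline{U}=U\cup\{p_U\}$ for a single point $p_U\in S$. I would then define $r_X\colon X\to S$ to be the identity on $S$ and, on each $\overline{U}$, the canonical proper deformation retraction of a disk onto its marked end $p_U$ (the standard retraction collapsing the subdisks towards the Gauss point). These partial maps agree with the identity on their overlaps, which lie in $S$, hence glue to a continuous deformation retraction of $X$ onto $S$; in particular $X$ and $S$ have the same homotopy type. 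The genuinely delicate point --- where I expect the real work to lie --- is the \emph{topological properness} of $r_X$: a compact $K\subseteq S$ may carry infinitely many disk components, and one must check that $r_X^{-1}(K)=K\cup\bigcup_{p_U\in K}\overline{U}$ is compact. This again rests on the local structure: the disk components attached at a point $x\in S$ accumulate at $x$ --- for every neighbourhood $W$ of $x$ all but finitely many of them lie in $W$, only finitely many ``leaving'' a fixed elementary neighbourhood --- so that $r_X^{-1}(x)$ is a compact wedge of closed disks, and a patching argument over a finite cover of $K$ by elementary open sets yields the compactness of $r_X^{-1}(K)$. I refer to \cite{Duc}, $1.6.13$ and $5.1.11$, for the complete verification: the substance is entirely contained in the structure theory invoked at the outset, the remainder being point-set topology on real graphs.
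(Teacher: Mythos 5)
The paper offers no proof of this proposition: it is quoted directly from Ducros's book (\cite{Duc}, $1.6.13$ and $5.1.11$), so there is no internal argument to compare yours against. On its own terms, your sketch follows the standard route --- local structure theory via elementary neighbourhoods, exclusion of type-$1$ and type-$4$ points, closedness of the skeleton, the complement of $S^\an(X)$ as a disjoint union of open disks attached to $S^\an(X)$ by a single end, and a disk-by-disk retraction --- and you are candid that the topological properness of $r_X$ is deferred to the reference. That overall architecture is right.

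One step, however, is false as stated and happens to sit at the crucial point of the local-finiteness argument. You assert $V_x\cap S^\an(X)=S^\an(\mathscr{T}_x)$. Only the inclusion $S^\an(X)\cap V_x\subseteq S^\an(V_x)$ holds in general: a point of $V_x$ may lie in an open disk of $X$ that is not contained in $V_x$, so it can belong to the skeleton of the neighbourhood without belonging to the skeleton of $X$. (Concretely, for $X=\A_k^{1,\an}$ one has $S^\an(X)=\emptyset$, yet any open annular neighbourhood of $\eta_{0,1}$ has nonempty skeleton.) The paper itself flags exactly this phenomenon in the proof of Lemma \ref{existence du morphisme de cochaîne}: ``the inclusion can be \textit{a priori} strict.'' The one-sided inclusion you do have is not quite enough either, since an arbitrary closed subset of a finite graph need not be a subgraph; you must additionally argue that $S^\an(X)\cap V_x$ is a finite union of vertices and closed subintervals of edges of $S^\an(V_x)$ --- for instance by showing that the locus of points of an edge of $S^\an(V_x)$ that lie in some open disk of $X$ is a (finite union of) open subinterval(s), which is where the structure theory genuinely intervenes. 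With that repair, together with the properness verification you have already deferred to \cite{Duc}, the argument goes through.
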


\begin{rem}
In order to be coherent with the terminology of \cite{M3}, we used in \cite{Gau} the term \emph{semi-graphs} for graphs with potentially "open" edges, i.e. edges which are either not abutting to any vertex, or with only one extremity abutting to one vertex. However we will not make this terminological distinction in this text to avoid some unnecessary heaviness, and speak only \emph{graphs} instead of semi-graphs. 
\end{rem}

\begin{defi}[Truncated skeleton]\label{definition truncated skeleton}
Let $X$ be a quasi-smooth connected $k$-analytic curve with non-empty skeleton $S^\an(X)$, and $r_X : X\to S^\an(X)$ the canonical retraction. The \emph{truncated skeleton} of $X$, denoted $S^\an(X)^\natural$, is the  subgraph of $S^\an(X)$ obtained from $S^\an(X)$ by removing the edges $e$ such that $r_X^{-1}(e)$ is not relatively compact in $X$. 
\end{defi}

\begin{rem}
The edges $e$ of $S^\an(X)$ such that $r_X^{-1}(e)$ is not relatively compact in $X$ are exatly the "open" edges of $S^\an(X)$. So in the terminology of \cite{Gau}, $S^\an(X)^\natural$ is actually the biggest sub-semi-graph of the semi-graph $S^\an(X)$ which is a graph. 
\end{rem}

\begin{defi}[Nodes of the analytic skeleton]
If $x$ is a point of a $k$-analytic curve, its \emph{genus}, denoted $g(x)$, is defined as being $0$ if $x$ is of type $1, 3$ or $4$, and equals the genus of the residual curve (see \cite{Gau}, $3.1.4$) $\mathscr{C}_x$ of $x$ when it is of type $2$. A point $x\in S^\an(X)$ is a \emph{node} of $S^\an(x)$ if it satisfies one of the following conditions : 
\begin{itemize}
\item[•]$x$ is a branching point of $S^\an(X)$ (i.e. $x$ is a vertex of $S^\an(x)$ and at least three different branches of $S^\an(X)$ abut to $x$)
\item[•]$x\in \partial^\an X$
\item[•]$g(x)>0$
\end{itemize}
\end{defi}

\subsection{Analytic annuli : functions, length and torsors}

We are going to define and study the basic properties of $k$-analytic annuli, central in this text.

\bigskip
If $I=[a,b]$ is a compact interval of $\R_{>0}$ (possibly reduced to one point), let $\CC_I$ be the $k$-analytic curve defined as an $k$-affinoid space by :$$\CC_I=\MM\left(k\lbrace b^{-1}T, aU\rbrace / (TU - 1)\right).$$
If $I\subset J$ are compact intervals of $\R_{>0}$, there is a natural morphism $\CC_I\rightarrow \CC_J$ which makes $\CC_I$ a analytic domain of $\CC_J$. If $I$ is an arbitrary interval of $\R_{>0}$, we can define $$\CC_I = \varinjlim_{J\subset I} \CC_J\subset \G_m^\an$$ where $J$ describes all compact intervals of $\R_{>0}$ containing $I$. It would have been equivalent to define $\CC_I$ as the analytic domain of $\P_k^{1, \an}$ defined by the condition $\vert T\vert \in I$. 

\bigskip
\begin{itemize}
\item[•]\textit{Analytic functions :} The $k$-algebra of analytic functions on $\CC_I$ is given by : 

$$\OO_{\CC_I}(\CC_I)=\left\{\sum_{i\in \Z}a_iT^i, a_i\in k, \lim_{\vert i \vert\to +\infty}\vert a_i \vert r^i=0, \forall r\in I   \right\}.$$
\item[•]\textit{Boundary :} If $s<r\in \R_+^*$, $\partial^\an\CC_{\lbrace r \rbrace}=\lbrace \eta_{0,r}\rbrace$, whereas $\partial^\an\CC_{[s,r]}=\lbrace \eta_{0,s}, \eta_{0,r}\rbrace$.
\end{itemize}

\begin{defi}A $k$-analytic \emph{annulus} is defined as a $k$-analytic curve isomorphic to $\CC_I$ for some interval $I$ of $\R_{>0}$. Annuli are quasi-smooth curves.
\end{defi}

\begin{prop}[Condition of invertibility of an analytic function, \cite{Duc} $3.6.6.1$ et $3.6.6.2$]\label{conditions d'inversibilité} Let $I$ be an interval of $\R_{>0}$, and $f=\sum_{i\in\Z}a_iT^i\in \OO_{\CC_I}(\CC_I)$ an analytic function on $\CC_I$. The function $f$ is invertible if and only if there exists an integer $i_0\in \Z$ (necessarily unique) such that $\vert a_{i_0}\vert r^{i_0} >\max_{i\neq i_0} \vert a_i\vert r^i$ for all $r\in I$.

\end{prop}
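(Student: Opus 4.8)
The plan is to read invertibility of $f$ off its behaviour along the canonical skeleton $\{\eta_{0,r}\}_{r\in I}$ of $\CC_I$, on which $\vert f\vert_{\eta_{0,r}}=\max_{i}\vert a_i\vert r^i$. For the \emph{sufficiency} part, suppose such an $i_0$ exists. Since $a_{i_0}\in k^\times$ and $T^{i_0}\in\OO_{\CC_I}(\CC_I)^\times$, it is enough to invert $1+g$ where $g:=a_{i_0}^{-1}T^{-i_0}f-1=\sum_{i\ne i_0}(a_i/a_{i_0})T^{\,i-i_0}\in\OO_{\CC_I}(\CC_I)$; the hypothesis says exactly that $\vert g\vert_{\eta_{0,r}}<1$ for every $r\in I$. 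For any compact subinterval $J=[r_1,r_2]\subseteq I$, the maximum modulus principle together with $\partial^\an\CC_J=\{\eta_{0,r_1},\eta_{0,r_2}\}$ shows that the spectral norm of $g$ on the affinoid $\CC_J$ equals $\max(\vert g\vert_{\eta_{0,r_1}},\vert g\vert_{\eta_{0,r_2}})<1$, so $1+g$ is a unit of the Banach algebra $\OO_{\CC_J}(\CC_J)$ by the Neumann series. Hence $f$ is invertible on every $\CC_J$; writing $\CC_I=\varinjlim_J\CC_J$ and gluing the inverses, which agree on overlaps by uniqueness of inverses, gives $f\in\OO_{\CC_I}(\CC_I)^\times$. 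Uniqueness of $i_0$ is immediate: for a fixed $r\in I$ any admissible index is the strict maximiser of $i\mapsto\vert a_i\vert r^i$.

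For the \emph{necessity} part, assume $f$ invertible, so $f$ has no zero on $\CC_I$. Fix $r^*\in I$ and put $M:=\vert f\vert_{\eta_{0,r^*}}=\max_i\vert a_i\vert(r^*)^i>0$; the key claim is that this maximum is attained by a single index. If $r^*\notin\vert k^\times\vert$ this is automatic: an equality $\vert a_i\vert(r^*)^i=\vert a_j\vert(r^*)^j$ with $i\ne j$ would force $(r^*)^{\vert i-j\vert}\in\vert k^\times\vert$, and since $k$ is algebraically closed $\vert k^\times\vert$ is a divisible subgroup of $\R_{>0}$, whence $r^*\in\vert k^\times\vert$, a contradiction. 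If $r^*\in\vert k^\times\vert$, choose $\lambda,\mu\in k^\times$ with $\vert\lambda\vert=r^*$ and $\vert\mu\vert=M$ and set $h:=\mu^{-1}f(\lambda T)=\sum_i b_iT^i$; then every $b_i$ lies in the valuation ring $\OO_k$, with $\vert b_i\vert=1$ exactly when $i$ realises $M$. The point $\eta_{0,r^*}$ is of type $2$, and the reduction of $h$ at it is the Laurent polynomial $\widetilde h=\sum_{i\,:\,\vert b_i\vert=1}\widetilde b_i\,T^i\in\widetilde k[T,T^{-1}]$. If two or more indices realised $M$, then $\widetilde h$ would not be a monomial, so, $\widetilde k$ being algebraically closed, it would have a root $\widetilde\alpha\in\widetilde k^\times$; lifting $\widetilde\alpha$ to $\alpha\in\OO_k^\times$, the residue disk $\{\vert T-\lambda\alpha\vert<r^*\}$ lies in $\CC_I$ (all its points have $\vert T\vert=r^*\in I$) and contains a zero of $f$, by Hensel's lemma and the standard Newton-polygon description of zeros of Laurent series over an algebraically closed field. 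This contradicts invertibility, proving the claim; call $i_0(r^*)$ the unique dominant index at $r^*$.

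It remains to see that $r^*\mapsto i_0(r^*)$ is constant. It is locally constant: near a given $r^*$ the strict inequalities $\vert a_{i_0}\vert r^{i_0}>\vert a_j\vert r^{j}$ persist, because the finitely many ``large'' coefficients vary continuously while, by the convergence condition defining $\OO_{\CC_I}(\CC_I)$, one has $\sup_{r\in[r_1,r_2]}\vert a_j\vert r^j\to 0$ as $\vert j\vert\to\infty$ on any compact $[r_1,r_2]\ni r^*$. Since $I$ is connected, $i_0(r^*)$ equals a fixed $i_0\in\Z$, which is then the required index. The one genuinely delicate point in all of this is the reduction step in the necessity direction — extracting an honest zero of $f$ in $\CC_I$ from a non-monomial reduction at a type-$2$ skeleton point; this is exactly what makes the boundary behaviour of $I$ enter the statement, as the non-invertible function $f=1+T$ on $\CC_{[1,p]}$ (with a tie at the included endpoint $r=1$) illustrates. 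Everything else is a routine blend of the maximum modulus principle, gluing of inverses over an affinoid exhaustion, and connectedness of $I$.
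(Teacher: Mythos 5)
The paper itself does not prove this proposition: it cites Ducros (\cite{Duc}, $3.6.6.1$ and $3.6.6.2$) as a black box, so there is no in-text proof to compare yours against. Your argument is correct and follows the standard route for such statements, close in spirit to Ducros's own treatment. The sufficiency direction via the Shilov boundary of a compact subannulus, the Neumann series, and an affinoid exhaustion is clean, and gluing the inverses over $\varinjlim_J\CC_J$ is legitimate since inverses are unique. For necessity, reducing to a single dominant index at each $r^*\in I$ via the dichotomy on whether $r^*\in\vert k^\times\vert$ is exactly the right key step, using divisibility of $\vert k^\times\vert$ in one case and the residual Laurent polynomial in the other. Two small remarks. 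First, a notational slip: after rescaling $T\mapsto\lambda T$ the reduction of $h$ is taken at $\eta_{0,1}$, not at $\eta_{0,r^*}$. Second, the appeal to ``Hensel's lemma'' is loose: $\tilde\alpha$ may be a multiple root of $\tilde h$, in which case naive Hensel lifting does not apply; what one really needs is the Newton-polygon count of zeros of $h(\alpha+S)$ in the open disk $\vert S\vert<1$ (equivalently, Weierstrass preparation), which indeed places a zero of $f$ inside the residue disk and hence inside $\CC_I$. Since you also invoke the Newton polygon, the argument stands. The local-constancy step correctly exploits the convergence condition to control the tail $\sup_{r\in[r_1,r_2]}\vert a_j\vert r^j$ uniformly on compacts, and connectedness of $I$ finishes.
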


\begin{rem}\label{remarque conditions d'inversibilité}For an analytic function $f=\sum_{i\in\Z}a_iT^i\in\OO_{\CC_I}(\CC_I)$, we will say that $f$ admits a \emph{strictly dominant monomial} $a_{i_0}T^{i_0}$ when there exists an interger $i_0\in \Z$ such that $\vert a_{i_0}\vert r^{i_0} >\max_{i\neq i_0} \vert a_i\vert r^i$ for all $r\in I$. Such a strictly dominant monomial is unique, and $i_0$ is the \emph{degree} of this monomial. 
The last proposition says that $f\in\OO_{\CC_I}(\CC_I)$ is invertible if and only if it admits a strictly dominant monomial, $a_{i_0}T^{i_0}$, in which case $f$ is written $f=a_{i_0}T^{i_0}(1+u)$ with $u\in \OO_{\CC_I}(\CC_I)$ of norm $<1$ on $\CC_I$.
\end{rem}

\bigskip
Let $f\in \OO_{\CC_I}(\CC_I)^\times$ be an invertible function on $\CC_I$ such that the degree $i_0$ of its strictly dominant monomial is different from $0$. Let $\varphi_f : \CC_I\to \A_k^{1,\an}$ be the morphism induced by $f$, and $\Lambda$ the map from $\R_{>0}$ to itself defined by $r\mapsto \vert a_{i_0} \vert r^{i_0}$.

\begin{prop}[\cite{Duc}, $3.6.8$]
The map $\Lambda$ induces a homeomorphism from $I$ to the interval $\Lambda(I)$ of $\R_{>0}$, and $\varphi_f$ induces a finite and flat morphism of degree $\vert i_0\vert$ from $\CC_I$ to $\CC_{\Lambda(I)}$.
\end{prop}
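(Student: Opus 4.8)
The plan is to treat the two assertions separately, the homeomorphism being formal and the finite‑flatness requiring a reduction to a pure monomial. The statement on $\Lambda$ is immediate: the map $r\mapsto|a_{i_0}|r^{i_0}$ is continuous and, since $i_0\neq 0$, strictly monotone on $\R_{>0}$ (increasing if $i_0>0$, decreasing if $i_0<0$), hence restricts to a homeomorphism of $I$ onto its image, which is an interval $\Lambda(I)\subseteq\R_{>0}$.

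For $\varphi_f$, the first step is to see that it respects the skeleta compatibly with $\Lambda$. Write $f=a_{i_0}T^{i_0}(1+u)$ with $\|u\|_{\CC_I}<1$ as in remark \ref{remarque conditions d'inversibilité}. For $x\in\CC_I$ with $r_X(x)=\eta_{0,r}$ one has $|T(x)|=r$ — from the explicit description of $\A_k^{1,\an}$, the function $|T|$ is constant, equal to $r$, along the fibre of the retraction onto $S^\an(\CC_I)=\{\eta_{0,r}\}_{r\in I}$ — and the ultrametric inequality forces $|1+u(x)|=1$, so $|f(x)|=|a_{i_0}|r^{i_0}=\Lambda(r)\in\Lambda(I)$. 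Hence $\varphi_f$ maps $\CC_I$ into $\CC_{\Lambda(I)}$, carries $r_X^{-1}(\eta_{0,r})$ into $r_{X'}^{-1}(\eta_{0,\Lambda(r)})$ where $X'=\CC_{\Lambda(I)}$, and satisfies $\varphi_f^{-1}(\CC_{\Lambda(J)})=\CC_J$ for every subinterval $J\subseteq I$. Since $\CC_I=\varinjlim_J\CC_J$ over compact $J\subseteq I$ and since finiteness, flatness and degree are local on the target, I would then reduce to $I=[a,b]$ compact, and further assume $i_0=d>0$ (exchanging the coordinates $T$ and $T^{-1}$ if necessary).

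The heart of the argument is the compact case. I would first handle the pure monomial $g:=a_dT^d$: from the presentations $A:=\OO_{\CC_I}(\CC_I)=k\{b^{-1}T,aU\}/(TU-1)$ and $B:=k\{\Lambda(b)^{-1}S,\Lambda(a)S'\}/(SS'-1)$, the assignment $S\mapsto g$, $S'\mapsto a_d^{-1}U^d$ is a bounded $k$-algebra morphism, and grouping a Laurent series $\sum_i\alpha_iT^i$ according to the class of $i$ modulo $d$ exhibits $A$ as a \emph{free} $B$-module with basis $1,T,\dots,T^{d-1}$, the convergence conditions matching precisely because $\|g\|=\Lambda(b)$ and $\|g^{-1}\|=\Lambda(a)^{-1}$; thus $\varphi_g$ is finite flat of degree $d$. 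To pass to $f=g(1+u)$, I would use that $|f(x)|=|g(x)|$ for all $x$, so $f$ and $g$ have the same norms on $\CC_I$; for $h\in A$, expanding $h=\sum_{j=0}^{d-1}c_j(g)T^j$ in the $\varphi_g$-basis, the series $c_j(f)$ still converge, $h':=\sum_{j=0}^{d-1}c_j(f)T^j$ is well defined, and from $g^m-f^m=f^m\big((1+u)^{-m}-1\big)$ together with $\|(1+u)^{-m}-1\|\leq\|u\|$ one obtains a contraction estimate $\|h-h'\|\leq\|u\|\,\|h\|$. Iterating shows that $1,T,\dots,T^{d-1}$ generate $A$ over the image of $B$ under $S\mapsto f$, so $\varphi_f$ is finite; being finite and torsion-free over the regular ring $B$ (as $\CC_I$ is integral and $\varphi_f$ is non-constant), it is flat; and its degree is $|i_0|$ by a Newton-polygon count on the fibre over a rigid point $S=c$ with $|c|$ interior to $\Lambda(I)$, where proposition \ref{conditions d'inversibilité} shows the dominant monomials of $f-c$ are $-c$ (degree $0$) and $a_{i_0}T^{i_0}$ (degree $i_0$), yielding a single Newton segment of horizontal length $|i_0|$, hence exactly $|i_0|$ zeros counted with multiplicity.

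The main obstacle I anticipate is the transition from $g$ to $f$: one must verify that the successive-approximation series converges to coefficients genuinely lying in $B$ — uniformly in the Laurent exponent $m$ — and not merely in a dense subring, and this is precisely where the equality $|f|=|g|$ and the ultrametric estimate $\|h-h'\|\le\|u\|\,\|h\|$ do the work. (Alternatively, finiteness of $\varphi_f$ can be deduced from general finiteness criteria for proper morphisms with finite fibres, properness coming from $\varphi_f^{-1}(\CC_{\Lambda(J)})=\CC_J$ and the behaviour of $\varphi_f$ on $\partial^\an\CC_I=\{\eta_{0,a},\eta_{0,b}\}$, and finiteness of the fibres from the same Newton-polygon computation; flatness and degree then follow as above.)
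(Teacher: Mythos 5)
The paper does not prove this statement: it is quoted verbatim from Ducros (\cite{Duc}, 3.6.8) and used as a black box, so there is no internal argument to compare yours against. Judged on its own terms, your proof is sound and follows the standard route one would expect: the homeomorphism claim is indeed immediate from strict monotonicity of $r\mapsto |a_{i_0}|r^{i_0}$; the identity $|f(x)|=\Lambda(|T(x)|)$, coming from $|1+u(x)|=1$, correctly shows $\varphi_f$ is compatible with the retractions and that $\varphi_f^{-1}(\CC_{\Lambda(J)})=\CC_J$, which legitimizes the reduction to compact $I$; the explicit freeness of $A$ over $B$ on the basis $1,T,\dots,T^{d-1}$ for the pure monomial, with the convergence conditions matching via $|\alpha_{md+j}a_d^{-m}|\Lambda(r)^m = |\alpha_{md+j}|r^{md}$, is the heart of the matter; and the Newton-polygon count of the fibre over a rigid point $c$ (the extremal exponents of $f-c$ being $0$ and $i_0$ on each circle of $\CC_I$) gives the degree.

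Two small points deserve to be made explicit in the perturbation step. First, the contraction estimate should be stated as $\|h-h'\|\leq\|u\|\max_{j,m}\|\beta_{j,m}\,g^mT^j\|$ and combined with the fact that the Laurent decomposition on an annulus is isometric, i.e. $\|h\|=\max_{j,m}\|\beta_{j,m}\,g^mT^j\|$ (note that $\|g^mT^j\|$ need not equal $\|g^m\|\cdot\|T^j\|$ when $m$ and $j$ have opposite signs, so the max must be taken over the monomials themselves); with that formulation the inequality $\|h-h'\|\leq\|u\|\,\|h\|$ is correct, using $|f(x)|=|g(x)|$ and $|(1+u(x))^{-m}-1|\leq|u(x)|$ pointwise. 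Second, to conclude that the limit coefficients lie in $B$ one should observe that the isometry also bounds each coefficient of $h_n$ in the $g$-basis by $\|h_n\|\leq\|u\|^n\|h\|$ (up to the fixed factors $\|T^j\|^{-1}$), so the series of coefficients converges in the Banach algebra $B$ itself. With these two clarifications the argument is complete; the alternative you sketch (properness plus finite fibres implies finiteness in Berkovich's theory) is also a legitimate shortcut.
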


\begin{defi}[Coordinate functions]
If $\CC$ is a $k$-analytic annulus, une function $f\in \OO_{\CC}(\CC)$ is a \emph{coordinate function} when it induces an isomorphism of $k$-analytic curves $\CC\iso \CC_I$ for some interval $I$ de $\R_{>0}$.
\end{defi}

\begin{coro}[Characterization of coordinate functions, \cite{Duc}, $3.6.11.3$ et $3.6.12.3$]\label{caractérisation des fonctions coordonnées}

An analytic function $f\in \OO_{\CC_I}(\CC_I)$ is a coordinate function if and only if $f$ admits a strictly dominant monomial of degree $i_0\in \lbrace -1, 1\rbrace$. If it is the case, $f$ is invertible in $\OO_{\CC_I}(\CC_I)$ and induces an analytic isomorphism $\CC_I\simeq \CC_{\vert a_{i_0}\vert I^{i_0}}$.

\end{coro}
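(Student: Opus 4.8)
The plan is to read the statement off two results already recorded above --- the invertibility criterion of Proposition~\ref{conditions d'inversibilité} and the structural description (\cite{Duc}, $3.6.8$) of the morphism $\varphi_f$ associated with an invertible function whose strictly dominant monomial has nonzero degree --- together with the standard fact that a finite flat morphism of degree one is an isomorphism.

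For the implication ``strictly dominant monomial of degree $i_0\in\lbrace -1,1\rbrace$ $\Rightarrow$ coordinate function'', I would argue as follows. Proposition~\ref{conditions d'inversibilité} gives at once that $f$ is invertible. Since $i_0\neq 0$, the cited proposition \cite{Duc}, $3.6.8$, applies: $\Lambda\colon r\mapsto |a_{i_0}|r^{i_0}$ is a homeomorphism of $I$ onto $|a_{i_0}|I^{i_0}$, and $\varphi_f$ is a finite flat morphism of degree $|i_0|=1$ from $\CC_I$ onto $\CC_{|a_{i_0}|I^{i_0}}$; being of degree one, it is an isomorphism, so $f$ is a coordinate function and induces $\CC_I\iso\CC_{|a_{i_0}|I^{i_0}}$.

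For the converse, suppose $f$ is a coordinate function. Composing the defining isomorphism with the inclusion of its target into $\A_k^{1,\an}$ shows that $\varphi_f$ is an isomorphism of $\CC_I$ onto some annulus $\CC_J=\lbrace|T|\in J\rbrace$; in particular $f$ is invertible (it pulls back the invertible coordinate of $\CC_J$), so Proposition~\ref{conditions d'inversibilité} provides a strictly dominant monomial $a_{i_0}T^{i_0}$ of some degree $i_0\in\Z$. The crucial point is to exclude $i_0=0$: then $f=a_0(1+u)$ with $|u(x)|<1$ for every $x\in\CC_I$ (apply Proposition~\ref{conditions d'inversibilité} over each compact subinterval of $I$), hence $|T|\equiv|a_0|$ on $\varphi_f(\CC_I)=\CC_J$, which forces $J=\lbrace|a_0|\rbrace$, while $|T-a_0|=|a_0|\,|u(x)|<|a_0|$ at every point of the image; but $\eta_{0,|a_0|}\in\CC_{\lbrace|a_0|\rbrace}$ satisfies $|T-a_0|_{\eta_{0,|a_0|}}=|a_0|$, so it is not in $\varphi_f(\CC_I)$, contradicting surjectivity. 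Once $i_0\neq 0$, \cite{Duc}, $3.6.8$, gives that $\varphi_f$ is finite flat of degree $|i_0|$ onto $\CC_{\Lambda(I)}$; comparing with the isomorphism $\varphi_f\colon\CC_I\iso\CC_J$ yields $\CC_J=\CC_{\Lambda(I)}$, hence $J=\Lambda(I)=|a_{i_0}|I^{i_0}$, and $|i_0|$ equals the degree of this isomorphism, namely $1$, so $i_0\in\lbrace-1,1\rbrace$.

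I expect the exclusion of $i_0=0$ to be the only delicate step: one has to rule out that a function of constant absolute value realises an isomorphism onto the degenerate annulus $\CC_{\lbrace|a_0|\rbrace}$. The computation with $\eta_{0,|a_0|}$ above does this; alternatively, one may observe that $\varphi_f(\CC_I)$ would lie inside the open disk $\lbrace|T-a_0|<|a_0|\rbrace$, which by Proposition~\ref{allure graphes reels} is a tree strictly smaller than $\CC_{\lbrace|a_0|\rbrace}$, and reach a contradiction by comparing Shilov boundaries. The rest is a direct application of the two cited propositions.
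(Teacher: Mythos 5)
The paper gives no in-house proof for this corollary --- it is stated with a citation to \cite{Duc}, $3.6.11.3$ and $3.6.12.3$ --- so there is no internal argument to compare against. Your derivation from Proposition~\ref{conditions d'inversibilité} and the finite-flat structure result (\cite{Duc}, $3.6.8$) is correct and is the natural way to unpack the cited statements: invertibility plus $|i_0|=1$ gives a degree-one finite flat morphism, hence an isomorphism; conversely, pulling back the coordinate $T$ of $\CC_J$ gives invertibility, and once $i_0\neq 0$ is known, comparing the two descriptions of $\varphi_f$ forces $|i_0|=1$ and $J=|a_{i_0}|I^{i_0}$.

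The one genuinely delicate step is the exclusion of $i_0=0$, and your treatment of it is sound: if $i_0=0$ then $|T|\equiv|a_0|$ on the image forces $J=\lbrace|a_0|\rbrace$, while $|(T-a_0)(\varphi_f(x))|=|a_0|\,|u(x)|<|a_0|$ for every $x\in\CC_I$, so the Shilov point $\eta_{0,|a_0|}$ of $\CC_{\lbrace|a_0|\rbrace}$ is missed and $\varphi_f$ cannot be surjective. Your sketched ``alternative'' at the end (comparing Shilov boundaries of a tree) is essentially the same observation phrased more loosely; the explicit computation with $\eta_{0,|a_0|}$ you give first is preferable because it is unambiguous. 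Overall the proof is correct and complete at the level of detail the paper itself operates on.
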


One can directly deduce the following corollary : 

\begin{coro}\label{caractérisation de couronnes (et des disques) isomorphes à une couronne donnée}
Let $I$ and $I'$ be two intervals of $\R_{>0}$, then $C_{I'}$ is isomorphic to $\CC_I$ if and only if $I'\in \vert k^\times\vert \,I^{\pm 1}.$

\end{coro}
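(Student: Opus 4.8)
The plan is to read this off directly from Corollary~\ref{caractérisation des fonctions coordonnées}, as the phrasing ``one can directly deduce'' suggests. Here $I^{\pm 1}$ means either $I$ or $I^{-1}=\{1/r:r\in I\}$, and $\vert k^\times\vert\,I^{\pm1}$ denotes the set of elements $c\,r$ (resp. $c/r$) for $c\in\vert k^\times\vert$ and $r\in I$.

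For the implication $I'\in\vert k^\times\vert\,I^{\pm 1}\Rightarrow\CC_{I'}\simeq\CC_I$, I would exhibit explicit coordinate functions on $\CC_I$. If $I'=\vert a\vert\,I$ with $a\in k^\times$, the function $f=aT\in\OO_{\CC_I}(\CC_I)$ has strictly dominant monomial $aT$, of degree $i_0=1$, so by Corollary~\ref{caractérisation des fonctions coordonnées} it is a coordinate function inducing an isomorphism $\CC_I\simeq\CC_{\vert a\vert I}=\CC_{I'}$. If instead $I'=\vert a\vert\,I^{-1}$, the same applies to $f=aT^{-1}$, whose strictly dominant monomial has degree $i_0=-1$, yielding $\CC_I\simeq\CC_{\vert a\vert I^{-1}}=\CC_{I'}$.

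For the converse, I would start from an isomorphism $\psi:\CC_{I'}\iso\CC_I$, let $g\in\OO_{\CC_I}(\CC_I)^\times$ be the standard coordinate (the restriction of $T$), and set $f:=g\circ\psi\in\OO_{\CC_{I'}}(\CC_{I'})$. Being the composite of the isomorphism $\psi$ with the coordinate function $g$, the function $f$ is a coordinate function on $\CC_{I'}$, so by Corollary~\ref{caractérisation des fonctions coordonnées} it has a strictly dominant monomial $a_{i_0}T^{i_0}$ with $i_0\in\{-1,1\}$ and induces an isomorphism $\CC_{I'}\simeq\CC_{\vert a_{i_0}\vert(I')^{i_0}}$. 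Since $\psi$ is only an abstract isomorphism of analytic curves (not a restriction of an automorphism of $\G_m^\an$), I would extract the intervals by comparing norm-images: writing $f=a_{i_0}T^{i_0}(1+u)$ with $\vert u\vert<1$ on $\CC_{I'}$ (Remark~\ref{remarque conditions d'inversibilité}) gives $\{\vert f(x)\vert:x\in\CC_{I'}\}=\vert a_{i_0}\vert(I')^{i_0}$, while $\psi(\CC_{I'})=\CC_I$ together with $\{\vert g(y)\vert:y\in\CC_I\}=I$ gives $\{\vert g(\psi(x))\vert:x\in\CC_{I'}\}=I$; hence $I=\vert a_{i_0}\vert(I')^{i_0}$. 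Solving this yields $I'=\vert a_{i_0}\vert^{-1}I$ when $i_0=1$ and $I'=\vert a_{i_0}\vert\,I^{-1}$ when $i_0=-1$, so in both cases $I'\in\vert k^\times\vert\,I^{\pm1}$. There is no serious obstacle here, the statement being a formal consequence of Corollary~\ref{caractérisation des fonctions coordonnées}; the only point needing a little care is this last extraction of the interval in an isomorphism-invariant fashion, and the norm-image computation conveniently also settles the type (bounded or not, open or closed at each end) of the interval for free, since $r\mapsto\vert a\vert r$ and $r\mapsto\vert a\vert/r$ are self-homeomorphisms of $\R_{>0}$.
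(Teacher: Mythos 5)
Your proof is correct and takes exactly the approach the paper has in mind: the paper offers no proof beyond the remark that the corollary ``can be directly deduced'' from Corollary~\ref{caractérisation des fonctions coordonnées}, and your argument is precisely that deduction (exhibit $aT^{\pm1}$ as explicit coordinate functions for one direction; pull back the standard coordinate along the isomorphism for the other and identify the image interval). The norm-image step for the converse is sound, though you could shorten it by observing that $\varphi_f=\psi$ post-composed with the inclusion $\CC_I\hookrightarrow\A_k^{1,\an}$, so the image analytic domain is $\CC_I$ and Corollary~\ref{caractérisation des fonctions coordonnées} already identifies it with $\CC_{\vert a_{i_0}\vert(I')^{i_0}}$, forcing $I=\vert a_{i_0}\vert(I')^{i_0}$.
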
\

\begin{rem}[Algebraic characterization of coordinate functions of annuli $(\cite{Duc}, 3.6.13.1)$]\label{remark about annuli}

Define $ \OO_{\CC_I}(\CC_I)^{\circ\circ}$ as the subset of $ \OO_{\CC_I}(\CC_I)$ consisting in functions of norms strictly lower than $1$ on $\CC_I$.
We saw that a function $f\in \OO_{\CC_I}(\CC_I)$ is invertible if and only if it admits a strictly dominant monomial, $a_{i_0}T^{i_0}$. In this case, it can be written $f=a_{i_0}T^{i_0}(1+u)$ with $u\in \OO_{\CC_I}(\CC_I)^{\circ\circ}$, and $\vert f\vert$ equals $\vert a_{i_0}\vert \cdot\vert T\vert ^{i_0}$ on $\CC_I$. Consequently, the group $$\mathscr{Z}_I:= \OO_{\CC_I}(\CC_I)^\times/k^\times\cdot (1+\OO_{\CC_I}(\CC_I)^{\circ\circ})  $$ 
is isomorphic to $\Z$, such an isomorphism is given by the degree of the strictly dominant monomial. From corollary \ref{caractérisation des fonctions coordonnées}, a function $f\in \OO_{\CC_I}(\CC_I)$ is a coordinate function of $\CC_I$ if and only of it is invertible and its class in $\mathscr{Z}_I$ is a generator of $\mathscr{Z}_I$.\\

Therefore, if $\CC$ is any $k$-analytic annulus and $f\in \OO_{\CC}(\CC)$, $f$ is a coordinate function if and only if it is invertible and is sent to a generator of the free abelian group of rank $1$ : $$\mathscr{Z}(\CC):= \OO_{\CC}(\CC)^\times/k^\times\cdot (1+\OO_{\CC}(\CC)^{\circ\circ}).  $$ 
\end{rem}

\begin{defi}[Length of an analytic annulus]\label{length of annuli}
\item If $I$ is an interval of $\R_{>0}$, the length of the annuls $\CC_I$ is defined as : $$\ell(\CC_I)=\log_p\left(\frac{\sup I}{\inf I}\right),$$ with $\ell(\CC_I)=+\infty$ whenever $\inf I=0$ or $\sup I=+\infty$.

The \emph{length} of a general $k$-analytic annulus $\CC$, denoted $\ell(\CC)$, is defined as the length of $\CC_I$ for any interval $I$ of $\R_{>0}$ such that $\CC$ is isomorphic to  $\CC_I$. From corollary \ref{caractérisation de couronnes (et des disques) isomorphes à une couronne donnée} we see that this definition does not depend on the choice of such $I$. 
\end{defi}

There exists a natural distance on the set of type-$2$ and type-$3$ points of $\P_k^{1,\an}$, which is coherent with this definition of length of an annulus. However, we will not define it in this text.

\bigskip

\textbf{\textit{Kummer torsors of an annulus :}} If $X$ is a $k$-analytic space and $\ell\in \N^\times$ and integer (in general it is necessary to ask that $\ell$ is not $0$ in $k$, but it is obviously the case here since we assumed from the beginning that $\mathrm{char}(k)=0$), the \emph{Kummer exact sequence} on $X_{\et}$ :

$$ 1\longrightarrow \mu_\ell \longrightarrow \G_m\overset{z\mapsto z^\ell}{\longrightarrow }\G_m\longrightarrow1 $$

induces an injective morphism $$\mathscr{O}_X(X)^\times / (\mathscr{O}_X(X)^\times)^\ell\overset{\iota}{\hookrightarrow} H^1(X_\et, \mu_\ell)$$ whose image will be denoted $\Kum_\ell(X)$. It is known (\cite{Ber2}) that any locally constant étale sheaf on $X_\et$ is representable. Consequently, $H^1(X_\et, \mu_\ell)$ classyfies all the \emph{analytic} étale $\mu_\ell$-torsors on $X$ up to isomorphism. If $f \in \mathscr{O}_X(X)^\times$, its image $(f)$ in $H^1(X_\et, \mu_\ell)$ by $\iota$ corresponds to $ \mathscr{M}(\mathscr{O}_X[T]/(T^\ell-f)) $. The elements of $\Kum_\ell(X)$ seen as analytic étale $\mu_\ell$-torsors will be called \emph{Kummer $\mu_\ell$-torsors}.

\begin{ex}
If $I$ is a non-empty interval of $\R_{>0}$, the (invertible) function $T^\ell\in \OO_{\CC_I}(\CC_I)^\times$ induces a Kummer $\mu_\ell$-torsor $\mathcal{C}_I\rightarrow \mathcal{C}_{I^\ell}$, identifying $\mathcal{C}_I$ with $\mathscr{M}(\mathscr{O}_{\mathcal{C}_{I^{\ell}}}[T]/(T^\ell-S))$, where $S$ is the standard coordinate of $\mathcal{C}_{I^{\ell}}$.
\end{ex}

\begin{prop}\label{propriétés kummériennes des revêtements de couronnes} Let $\CC$ be a $k$-analytic annulus, and $\ell\in \N^\times$ an integer \emph{prime to the residual characteristic $p$} :
\begin{enumerate}
\item The group $\Kum_\ell(\CC)$ is isomorphic to $\Z/\ell\Z$. This isomorphism is non-canonical as soon as $\ell\geqslant 3$, but becomes canonical when one fixes an orientation of $\CC$.
\item Any connected component of a Kummer $\mu_\ell$-torsor of $\CC$ is an $k$-analytic annulus. 
\item Any $\mu_\ell$-torsor of $\CC$ is Kummer, which leads to an isomorphism : $$ H^1(\mathcal{C}_\et, \mu_\ell) \simeq \Kum_\ell (\CC)\simeq \Z/\ell\Z. $$ 
\end{enumerate}
\end{prop}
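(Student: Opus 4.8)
The plan is to establish the three assertions in the order (1), (3), (2), since the computation of $\Kum_\ell(\CC)$ feeds the other two. For (1): as $\iota$ is injective, $\Kum_\ell(\CC)\cong\OO_{\CC}(\CC)^\times/(\OO_{\CC}(\CC)^\times)^\ell$, so I would compute this quotient from the description of units in Remark~\ref{remark about annuli}: fixing a coordinate function $T$ of $\CC$, every unit is $c\,(1+u)\,T^n$ with $c\in k^\times$, $u\in\OO_{\CC}(\CC)^{\circ\circ}$, $n\in\Z$, and the degree of the dominant monomial identifies $\mathscr{Z}(\CC)=\OO_{\CC}(\CC)^\times/\bigl(k^\times\cdot(1+\OO_{\CC}(\CC)^{\circ\circ})\bigr)$ with $\Z$. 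The crux is that $k^\times$ and $1+\OO_{\CC}(\CC)^{\circ\circ}$ are both $\ell$-divisible: the former because $k$ is algebraically closed; the latter because $\ell$ is prime to $p$, so $1/\ell\in\Z_p^\times$, the coefficients $\binom{1/\ell}{n}$ lie in $\Z_p\subset k$, and $\sum_n\binom{1/\ell}{n}u^n$ converges in $\OO_{\CC}(\CC)$ (since $\lvert\binom{1/\ell}{n}u^n\rvert\le\lvert u\rvert^n\to 0$) to an element of $1+\OO_{\CC}(\CC)^{\circ\circ}$ with $\ell$-th power $1+u$. Hence $(\OO_{\CC}(\CC)^\times)^\ell=k^\times\cdot(1+\OO_{\CC}(\CC)^{\circ\circ})\cdot T^{\ell\Z}$, so $\Kum_\ell(\CC)\cong\mathscr{Z}(\CC)/\ell\,\mathscr{Z}(\CC)\cong\Z/\ell\Z$, a generator being the class of a coordinate function; an orientation of $\CC$ selects one of the two generators of $\mathscr{Z}(\CC)$, making the isomorphism canonical (for $\ell\le 2$ there is nothing to choose).

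For (3): I would feed the Kummer long exact sequence in étale cohomology on $\CC$ (with $H^0(\CC_\et,\G_m)=\OO_{\CC}(\CC)^\times$ and $H^1(\CC_\et,\G_m)=\mathrm{Pic}(\CC)$) to get a short exact sequence $0\to\Kum_\ell(\CC)\to H^1(\CC_\et,\mu_\ell)\to\mathrm{Pic}(\CC)[\ell]\to 0$. It then suffices to know $\mathrm{Pic}(\CC)=0$, i.e. that the ring of analytic functions on an annulus has trivial Picard group — equivalently, via the classification of finite étale covers of annuli of degree prime to $p$, that every such cover is a disjoint union of standard power-map covers — which is part of Ducros' study of analytic curves~\cite{Duc}. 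Granting it, $H^1(\CC_\et,\mu_\ell)=\Kum_\ell(\CC)$, so every $\mu_\ell$-torsor of $\CC$ is Kummer, and by (1) this group is isomorphic to $\Z/\ell\Z$.

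For (2): let $Y\to\CC$ be a Kummer $\mu_\ell$-torsor. A Kummer torsor depends only on the class of its defining unit modulo $(\OO_{\CC}(\CC)^\times)^\ell$, so by (1) I may assume, after an isomorphism of torsors, that $\CC\cong\CC_I$ with coordinate $S$ and $Y=\MM(\OO_{\CC}[T]/(T^\ell-S^m))$, $0\le m<\ell$. Put $d=\gcd(m,\ell)$, $m=d\,m'$, $\ell=d\,\ell'$, and fix a primitive $d$-th root of unity $\zeta\in k$; then $T^\ell-S^m=\prod_{j=0}^{d-1}(T^{\ell'}-\zeta^{j}S^{m'})$, the factors being pairwise coprime (their pairwise differences are units, since $S$ is invertible), so $Y$ is the disjoint union of the pieces $Y_j=\MM(\OO_{\CC}[T]/(T^{\ell'}-\zeta^{j}S^{m'}))$. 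Absorbing an $\ell'$-th root of $\zeta^{j}$ into $T$ identifies $Y_j$ with $\MM(\OO_{\CC}[T]/(T^{\ell'}-S^{m'}))$, and since $\gcd(m',\ell')=1$ the $\OO_{\CC}$-algebra map $\OO_{\CC}[T]/(T^{\ell'}-S^{m'})\to\OO_{\CC}[W]/(W^{\ell'}-S)$, $T\mapsto W^{m'}$, is surjective (as $W=(W^{m'})^{a}S^{b}$ for $am'+b\ell'=1$) between free $\OO_{\CC}(\CC)$-modules of the same rank $\ell'$, hence an isomorphism; thus $Y_j\cong\MM(\OO_{\CC_I}[W]/(W^{\ell'}-S))=\CC_{I^{1/\ell'}}$, a $k$-analytic annulus. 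Being connected, these $Y_j$ are exactly the connected components of $Y$.

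I expect the main obstacle to be the vanishing $\mathrm{Pic}(\CC)=0$ in (3) — equivalently, the classification of the prime-to-$p$ finite étale covers of an annulus — which rests on the structure theory of Berkovich annuli rather than on manipulation of the Kummer sequence; parts (1) and (2) are then essentially bookkeeping, the one slightly delicate point being the convergence of the $\ell$-th-root binomial series in (1), which works precisely because $\ell$ is prime to the residual characteristic.
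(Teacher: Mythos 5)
Your overall strategy is sound and more self-contained than the paper's, which for (1) and (2) simply points to Ducros (\cite{Duc}, 3.6.30--31) and for (3) invokes Berkovich's theorem that tame finite \'etale covers of a compact annulus are Kummer (\cite{Ber2}, 6.3.5), followed by a colimit over compact subannuli to handle the open case. Your treatment of (1) matches the paper's idea (the $\ell$-divisibility of $k^\times\cdot(1+\OO_\CC(\CC)^{\circ\circ})$ is precisely the key fact the paper highlights) but supplies the binomial-series argument explicitly, and your factorization argument for (2) is a clean, elementary replacement for the reference to Ducros. For (3) you take a genuinely different route: the Kummer long exact sequence plus $\mathrm{Pic}(\CC)=0$, instead of the tameness theorem. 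Both work; the paper's route quotes a harder theorem, while yours reduces the whole proposition to a Picard-group vanishing.

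The one place your argument is not yet airtight is precisely that vanishing. Your parenthetical --- that $\mathrm{Pic}(\CC)=0$ is \emph{equivalent} to the classification of prime-to-$p$ finite \'etale covers of annuli as disjoint unions of standard Kummer covers --- is not an equivalence, and if you tried to \emph{prove} $\mathrm{Pic}(\CC)=0$ that way you would be assuming essentially part (3), making the argument circular. You should instead source the vanishing independently: either from the triviality of the Picard group of affinoid annuli together with a Mittag-Leffler/colimit argument for open annuli, or, most cleanly, from the fact the paper itself uses in Lemma \ref{existence du morphisme de cochaîne}, namely that by \cite{Ber2}, 4.1.10 every \'etale $\G_m$-torsor on a smooth analytic curve is a topological torsor, and an annulus is contractible (it deformation-retracts onto its skeleton, an interval), so $H^1(\CC,\G_m)=H^1_{\mathrm{top}}(\CC,\G_m)=0$. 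With that fix, your proof goes through for compact and open annuli alike, without needing the separate colimit step the paper uses in (3).
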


\begin{proof}
The proof of the two first points can be found in \cite{Duc}, $3.6.30$ and $3.6.31$. The facts that $k$ is algebraically closed and that $\ell$ is prime to $p$ are necessary for the first point, since it implies that the subgroup  $k^\times\cdot (1+\OO_\CC(\CC)^{\circ\circ})$ of $\OO_\CC(\CC)^\times$ is $\ell$-divisible. It means in terms of the group $\mathscr{Z}(\CC)$ defined in remark \ref{remark about annuli} that :
$\ell\mathscr{Z}(\CC)\simeq \left(\OO_\CC(\CC)^\times\right)^\ell/k^\times\cdot (1+\OO_\CC(\CC)^{\circ\circ})$. Therefore, there is a canonical isomorphism : 
$$\mathrm{Kum}_{\ell}(\CC)\simeq \OO_\CC(\CC)^\times/\left(\OO_\CC(\CC)^\times\right)^\ell\simeq \mathscr{Z}(\CC)/\ell \mathscr{Z}(\CC)\simeq \Z/\ell\Z.$$
The proof of the last point comes from \cite{Ber2}, $6.3.5$, where Berkovich shows that any connected tame finite étale covering of a compact annulus is Kummer, and that any $\mu_\ell$-torsor is tame since $\ell$ is assumed to be prime to $p$. It easy to extend it to the case when $\CC$ is not compact, since it is then identified with the colimit of its compact subannuli.
\end{proof}

\subsection{Tempered fundamental group}

Let $X$ be a quasi-smooth strictly $k$-analytic space (not necessarily a curve). As defined in \cite{DJg}, an \emph{étale covering} of $X$ is a morphism $\varphi : Y\to X$ such that $X$ admits an open covering $X=\bigcup_{i\in I}U_i$ such that for each $i\in I$ : 
$$\varphi^{-1}(U_i)=\coprod_{j\in J_i} Y_{i,j},$$
where each $Y_{i,j}\to U_i$ is finite étale, with potentially infinite index sets. If $X$ is connected, an étale covering $\varphi : Y\to X$ is \emph{Galois} when $Y$ is connected and the action of the automorphism group $G=\Aut(\varphi)$ is simply transitive.  

\bigskip
For instance, finite étale coverings as well as topological coverings (for the Berkovich topology) are étale coverings. Yves André defined in \cite{And} the notion of \emph{tempered covering}, defined as follows : 
\begin{defi}
An étale covering $\varphi : Y\to X$ is \emph{tempered} if it is the quotient of the composition of a topological covering and of a finite étale covering, i.e. if there exists a commutative diagram of $k$-analytic spaces :
\begin{equation*}\label{diagramme commutatif tempéré}
 \xymatrix{ & Z \ar[ld]_\psi \ar[rd]&\\ W \ar[rd]_\chi & & Y \ar[ld]^\varphi\\ & X &}
\end{equation*} where $\chi$ is a finite étale covering and $\psi$ a topological covering. It is equivalent to say that $\varphi$ becomes topological after pullback by some finite étale covering. Let $\Cov^\tp(X)$ be the category of tempered coverings of $X$.
\end{defi}

If $x\in X$ is a geometric point, consider the fibre functor $$F_x : \Cov^\tp(X)\to \mathrm{Set}$$ which maps a covering $Y\to X$ to the fibre $Y_x$. The \emph{tempered fundamental group pointed at $x$} is defined as the automorphism group of the fibre functor in $x$ : 
$$\pi_1^\tp(X,x):=\Aut(F_x).$$ 
The group $\pi_1^\tp(X,x)$ becomes a topological group, by considering the basis of open subgroups consisting of the stabilizers $(\mathrm{Stab}_{F_x(Y)}(y))_{Y\in \Cov^\tp(X),\; y\in F_x(Y)}$. It is a prodiscrete topological group. 

\bigskip
If $x$ and $x'$ are two different geometric points, the functors $F_x$ and $F_{x'}$ are (non canonically) isomorphic, and any automorphism of $F_x$ induces an inner automorphism of $\pi_1^\tp(X,x)$. Thus, one can consider the \emph{tempered fundamental group} $\pi_1^\tp(X)$, defined up to unique outer isomorphism.

\bigskip
If $\pi_1^\mathrm{alg}(X,x)$ (resp. $\pi_1^\mathrm{top}(X,x)$) denotes the group classifying pointed finite étale (resp. topological) coverings of $X$, the natural morphism $\pi_1^\mathrm{temp}(X,x)\to \pi_1^\mathrm{top}(X,x)$ is always surjective, and the natural morphism $\pi_1^\mathrm{temp}(X,x)\to \pi_1^\mathrm{alg}(X,x)$ has dense image, such that $\pi_1^\mathrm{alg}(X,x)$ can be identified with the profinite completion of $\pi_1^\tp(X,x)$ : $$\pi_1^\mathrm{alg}(X,x)=\widehat{\pi_1^\tp(X,x)}.$$ In dimension $1$, when $X$ is a $k$-analytic curve, the morphism $\pi_1^\mathrm{temp}(X,x)\to \pi_1^\mathrm{alg}(X,x)$ is injective (these results can be found in \cite{And}, $2.1.6$). As a consequence, the affine and projective lines $\A_k^{1,\an}$ and $\P_k^{1,\an}$ do not admit any non-trivial tempered coverings : $$\pi_1^\tp(\P_k^{1,\an}) \simeq \pi_1^\tp(\A_k^{1,\an}) \simeq 0.$$

\begin{defi}[Moderate tempered coverings]\label{p' version du groupe fondamental}
Let $\Cov^{\tp, \pp}(X)$ be the full subcategory of $\Cov^\tp(X)$ consisting of tempered coverings which are quotients of a topological covering and a Galois finite étale covering of \emph{degree prime to $p$}. In the same way than for the tempered case, it is possible to consider a classifying group defined as the automorphism group of a geometric fibre functor, and well defined up to unique outer automorphism : this group $\pi_1^{\tp, \pp}(X)$ is called the \emph{moderate tempered group} of $X$. It is naturally a topological pro-discrete group.\end{defi}

\begin{rem}\label{pi1t}
When $X$ is a $k$-analytic curve, the group $\pi_1^{\tp, \pp}(X)$ can be constructed group-theoretically from $\pi_1^\tp(X)$ as the projective limit of quotients of $\pi_1^\tp(X)$ admitting a torsion-free normal subgroup of finite index prime to $p$. 
\end{rem}

\subsection{Verticial, vicinal and cuspidal subgroups}

We recall here some notions and terminology of \cite{Gau} about $k$-analytically hyperbolic curves. Let $X$ be a quasi-smooth connected $k$-analytic curve with non-empty skeleton $S^\an(X)$, and $r_X : X\twoheadrightarrow S^\an(X)$ be the canonical retraction. Let $\Sigma_X$ be the set of vertices of $S^\an(X)$ (it is the set of \emph{nodes} of $S^\an(X)$ in the langage of \cite{Gau}).

\bigskip
An edge $e$ of $S^\an(X)$ can be of two different types :  
\begin{itemize}
\item[•]it is a \emph{vicinal} edge whenever the connected component of $X\setminus \Sigma_X$ associated to $e$, i.e. $r_X^{-1}(\overset{\circ}{e})$, is relatively compact in $X$, which is the same than asking that each of the two extremities of $e$ abuts to a vertex (it is a "closed" edge).

\item[•]it is a \emph{cusp} whenever the associated connected component of $X\setminus \Sigma_X$ is non-relatively compact in $X$, in other words when it is either a isolated edge, or only one of its extremities abuts to a vertex.
\end{itemize}

\begin{rem}
The connected component of $X\setminus \Sigma_X$ associated to a vicinal edge is always a $k$-analytic annulus. However, it might not always be the case for cusps (see \cite{Gau}, Remark $2.18$). A cusp whose associated connected component of $X\setminus \Sigma_X$ is an annulus will be called \emph{annular}.
\end{rem}

Recall from \cite{Gau}, that an étale covering $\varphi : Y\to X$ of a quasi-smooth connected curve $X$ is called \emph{moderate} if for any $y\in Y$, the degree $[\HC(y)^\mathrm{gal}:\HC(\varphi(y))]$ is prime to $p$, where $\HC(y)^\mathrm{gal}$ stands for the Galois closure of the extension $\HC(y)/\HC(\varphi(y))$. The category of moderate covering of $X$ is a Galois category whose fundamental group is denoted $\pi_1^\mathrm{t}(X)$ : the moderate fundamental group of $X$, which is a profinite group. 

\bigskip
Let $e$ be an edge of $S^\an(X)$ and $\CC_e$ the associated connected component of $X\setminus \Sigma_X$. Let $\pi_e=\pi_1^\mathrm{t}(\CC_e)$ the \emph{moderate} fundamental group of $\CC_e$. If $v$ is a vertex of $S^\an(X)$, the \emph{star} centred in $v$, denoted $\mathrm{St}(v,X)$, is defined by $$\mathrm{St(v,X)}=\lbrace v\rbrace \sqcup \bigsqcup_e \CC_e,$$where the disjoint union is taken over all edges $e$ of $S^\an(X)$ abutting to $v$.\
Let ${\pi_v=\pi_1^\mathrm{t}(\mathrm{St}(v,X))}$ be the \emph{moderate} fundamental group of $\mathrm{St(v,X)}$.

\bigskip
We saw in \cite{Gau} that if $X$ is $k$-analytically hyperbolic, for any component $c$ of $S^\an(X)$ (vertex or edge), there is a natural embedding $\pi_c\hookrightarrow \pi_1^{\tp,\pp}(X)$. This comes from the fact that the semi-graph of anabelioids $\GG(X,\Sigma_X)$ is of \emph{injective type} and that there is a natural isomorphism $ \pi_1^\tp(\GG(X,\Sigma_X))\simeq \pi_1^{\tp,\pp}(X) $ (see \cite{Gau}, Corollary $3.36$).

\begin{defi}
If $X$ is a $k$-analytically hyperbolic curve, a compact subgroup of $\pi_1^{\tp,\pp}(X)$ is called : 
\begin{itemize}
\item[•]\emph{vicinal} if it is of the form $\pi_e$ for some vicinal edge $e$ of $S^\an(X)$,
\item[•]\emph{cuspidal} if it is of the form $\pi_e$ for some cusp $e$ of $S^\an(X)$,
\item[•]\emph{verticial} if it is of the form $\pi_v$ for some vertex $v$ of $S^\an(X)$.
\end{itemize}
\end{defi} 

\begin{rem}
The Kummer nature of moderate coverings of an annulus implies that vicinal subgroups as well as cuspidal subgroups are always isomorphic to $\widehat{\Z}^{\pp}$, even for non-annular cusps. However, a compact subgroup of $\pi_1^{\tp,\pp}(X)$ cannot be at the same time vicinal and cuspidal. Verticial subgroups are always isomorphic to the \emph{prime-to-$p$}-profinite completion of the fundamental group of a hyperbolic Riemann surface (see \cite{Gau}, Corollary $3.23$ and proof of $3.30$).
\end{rem}

For a $k$-analytically hyperbolic curve $X$, verticial and vicinal subgroups can be characterised directly from the group $\pi_1^{\tp, \pp}(X)$ : verticial subgroups correspond exactly to (conjugacy classes of) maximal compact subgroup, whereas vicinal subgroups correspond to (conjugacy classes of) non-trivial intersections of two maximal compact subgroups. Therefore one can reconstruct the truncated skeleton $S^\an(X)^\natural$ from the tempered group $ \pi_1^{\tp, \pp}(X)$ (so also from $\pi_1^{\tp}(X)$, since the first one can be deduced from the second taking a suited inverse limit, see \ref{pi1t}).

\section{Harmonic cochains and torsors}\label{section cochaînes harmoniques et torseurs}

\subsection{Splitting conditions of $\mu_p$-torsors}

\begin{lem}\label{lemme sur la distance entre deux racines de l'unité} 
Let $\xi$ and $\xi'$ be two distinct $p^{\mathrm{th}}$-roots of unity in $k$ (recall that $k$ is assumed to be algebraically closed). Then $\vert \xi - \xi'\vert= p^{-\frac{1}{p-1}}$.
\end{lem}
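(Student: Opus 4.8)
The plan is to reduce everything to the single identity $\prod_{j=1}^{p-1}(1-\zeta^j)=p$, valid in $\Z[\zeta]\subseteq k$ for a fixed primitive $p^{\mathrm{th}}$-root of unity $\zeta$, and then to argue that the $p-1$ factors on the left-hand side all have the same absolute value.

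First I would reduce to the case where one of the two roots equals $1$. Writing $\xi-\xi'=\xi'\,(\xi/\xi'-1)$ and using that $k$ is non-archimedean with $\vert\xi'\vert=1$ (roots of unity are units of the valuation ring), one gets $\vert\xi-\xi'\vert=\vert\xi/\xi'-1\vert$, where $\xi/\xi'$ is again a $p^{\mathrm{th}}$-root of unity, distinct from $1$ since $\xi\neq\xi'$. So it suffices to prove $\vert 1-\omega\vert=p^{-1/(p-1)}$ for every $p^{\mathrm{th}}$-root of unity $\omega\neq 1$; as $p$ is prime, every such $\omega$ is primitive, hence of the form $\zeta^j$ with $1\leqslant j\leqslant p-1$.

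Next I would show that $\vert 1-\zeta^j\vert$ does not depend on $j\in\{1,\dots,p-1\}$. Since $\dfrac{1-\zeta^j}{1-\zeta}=1+\zeta+\cdots+\zeta^{j-1}$ lies in $\Z[\zeta]$, hence in the valuation ring of $k$, we get $\vert 1-\zeta^j\vert\leqslant\vert 1-\zeta\vert$. Choosing $j'$ with $jj'\equiv 1\pmod p$ and repeating the computation with $\zeta$ replaced by the (also primitive) root $\zeta^j$, for which $(\zeta^j)^{j'}=\zeta$, yields the reverse inequality. Hence $\dfrac{1-\zeta^j}{1-\zeta}$ is a unit of the valuation ring, i.e. $\vert 1-\zeta^j\vert=\vert 1-\zeta\vert$ for all $j$.

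Finally, evaluating the factorization $X^{p-1}+\cdots+X+1=\prod_{j=1}^{p-1}(X-\zeta^j)$ of the $p^{\mathrm{th}}$ cyclotomic polynomial at $X=1$ gives $p=\prod_{j=1}^{p-1}(1-\zeta^j)$, whence $p^{-1}=\vert p\vert=\prod_{j=1}^{p-1}\vert 1-\zeta^j\vert=\vert 1-\zeta\vert^{p-1}$, using the normalization $\vert p\vert=p^{-1}$ fixed at the beginning of the paper. Therefore $\vert 1-\zeta\vert=p^{-1/(p-1)}$, which together with the reductions above proves the lemma. There is no serious obstacle here: the only point needing a little care is the symmetry argument showing that all the $1-\zeta^j$ are associates in the valuation ring of $k$, and everything else is a direct computation.
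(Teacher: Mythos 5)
Your proof is correct and rests on the same core identity as the paper's: evaluating the $p$-th cyclotomic polynomial at $1$ to get $p=\prod_{j=1}^{p-1}(1-\zeta^j)$, together with the same initial reduction (factoring out a root of unity, which is an isometry, to reduce to pairs of the form $1,\omega$). Where you diverge is in the argument that all the factors $1-\zeta^j$ share the same absolute value: the paper invokes that they lie in one $\Gal(k/\Q)$-conjugacy class, whereas you observe directly that $\dfrac{1-\zeta^j}{1-\zeta}=1+\zeta+\cdots+\zeta^{j-1}$ lies in the valuation ring, and that the symmetric quotient obtained via $j'$ with $jj'\equiv 1\pmod p$ also does, so the quotient is a unit. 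Your route is more elementary and more self-contained: it avoids the Galois-theoretic step, which as stated in the paper is slightly loose (automorphisms of $k$ over $\Q$ need not be isometries of $k$; the correct justification would pass through conjugacy over $\Q_p$ and uniqueness of extension of absolute values from a complete field). The paper's version is shorter once one grants that fact; yours makes the equality of valuations an explicit arithmetic identity in $\Z[\zeta]$, which is arguably cleaner in a context where $k$ is not algebraic over $\Q$.
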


\begin{proof}
Write $\displaystyle{\Phi_p=\frac{X^p-1}{X-1}=\sum_{i=0}^{p-1}X^i=\prod_{\xi\in \mu'_p}X-\xi\in \Q[X]}$ for the $p^{\mathrm{th}}$ cyclotomic polynomial, where $\mu_p'$ stands for the set of the $p-1$ primitive $p^{\mathrm{th}}$-roots of unity in $k$. The evaluation at $1$ gives : $p=\prod_{\xi\in \mu'_p}1-\xi.$ For $\xi$ describing $\mu'_p$, all the $1-\xi$ have the same norm since they are on the same $\Gal(k/\Q)$-conjugacy class. Therefore, $\vert 1-\xi \vert=p^{-\frac{1}{p-1}}$, and we obtain the result since multiplication by any $p^{\mathrm{th}}$-root of unity is an isometry of $k$. 
\end{proof}

An étale coverings $\varphi : Y\to X$ between two $k$-analytic curves \emph{totally splits} over a point $x\in X$ if for any $y\in \varphi^{-1}(\lbrace x\rbrace)$, the extension $\mathscr{H}(x)\to \mathscr{H}(y)$ is an isomorphism. When $\varphi$ is of degree $n$, $\varphi$ totally splits over $x$ if and if the fibre $\varphi^{-1}(\lbrace x\rbrace)$ has exactly $n$ elements, which is the same as saying that locally, over a neighbourhood of $x$, $\varphi$ is a topological covering (see \cite{And}, III, $1.2.1$).

\bigskip
The following proposition, precising the splitting sets of the $\mu_{p^h}$-torsor given by the function $\sqrt[p^h]{1+T}$, will be of paramount important in this article.

\begin{prop}\label{décomposition du torseur sauvage} 
If $h\in \N^\times$, the étale covering $\G_m^\an \xrightarrow{z\mapsto z^{p^h}} \G_m^\an$ totally splits over a point $\eta_{z_0, r}$ satisfying $r<\vert z_0\vert=:\alpha$ if and only if : $r<\alpha p^{-h-\frac{p}{p-1}}$. More precisely, the inverse image of $\eta_{z_0, r}$ contains :
\begin{itemize}
\item[•]only one element when $r\in [\alpha p^{-\frac{p}{p-1}}, \alpha];$
\item[•]$p^i$ elements when $r\in[\alpha p^{-i-\frac{p}{p-1}}, \alpha p^{-i-\frac{1}{p-1}}[,$ with $1\leqslant i\leqslant h-1$;
\item[•]$p^h$ elements when $r\in[0, \alpha p^{-h-\frac{1}{p-1}}[.$
\end{itemize}
\end{prop}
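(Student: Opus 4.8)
The plan is to work in the single annulus $\Gm^{\an}$ with coordinate $z$, and to analyze the covering $z\mapsto z^{p^h}$ over a point $\eta_{z_0,r}$ with $r<|z_0|=\alpha$. After a multiplicative change of coordinate I may normalize so that $|z_0|=\alpha$, and since $r<\alpha$ the point $\eta_{z_0,r}$ lies in the open disk $D(z_0,\alpha^-)$, on which $z$ is a unit with $|z|\equiv\alpha$. Writing $z=z_0(1+w)$ with $|w|<1$, the fibre of $\eta_{z_0,r}$ under $z\mapsto z^{p^h}$ is governed by how the $p^h$ branches $z_0^{1/p^h}\zeta(1+w)^{1/p^h}$, $\zeta\in\mu_{p^h}$, are separated at the Berkovich point of radius $r$; equivalently, I want to count how many of the $p^h$ preimages $y$ of $\eta_{z_0,r}$ have $\HC(y)=\HC(\eta_{z_0,r})$, which happens exactly when the corresponding branches are "visible" at radius $r$.

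**The key estimate.** The main quantitative input is Lemma~\ref{lemme sur la distance entre deux racines de l'unité}: two distinct $p$th roots of unity are at distance $p^{-1/(p-1)}$, and more generally two distinct $p^j$th roots of unity $\xi\neq\xi'$ with $\xi/\xi'$ of exact order $p^m$ are at distance $p^{-1/(p^{m-1}(p-1))}$ (this follows by the same cyclotomic-norm argument applied to $\Phi_{p^m}$). I would first treat the case $z_0=1$, i.e. the covering near the point $\eta_{1,r}$: the $p^h$ roots of unity $\mu_{p^h}$ are the fibre of $z\mapsto z^{p^h}$ over $\eta_1$, and pushing out to radius $r$, the preimage $\eta_{\xi,\rho}$-type points merge precisely according to the ultrametric clustering of $\mu_{p^h}$ by the distances above. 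Counting: at radius $r\in[p^{-p/(p-1)},1]$ all of $\mu_{p^h}$ lies in a single ball $B(1,r)$ once $r\geq p^{-1/(p-1)}$ — wait, one must be careful, the merging of the *fibre over $\eta_{1,r}$* is not the clustering of $\mu_{p^h}$ at scale $r$ but at scale $r^{1/p^h}$ rescaled, because the map raises to the $p^h$; concretely $|\zeta(1+w)^{1/p^h}-\zeta'(1+w')^{1/p^h}|$ for $|w|,|w'|\sim r$. The genuinely careful step is to show that the relevant threshold for separating the branch indexed by $\zeta$ of order $p^i$ from the trivial branch is $r=\alpha p^{-i-\frac{1}{p-1}}$ on one side and that full splitting of all $p^h$ sheets occurs once $r<\alpha p^{-h-\frac1{p-1}}$; the shift by $\frac{p}{p-1}=\frac1{p-1}+1$ versus $\frac1{p-1}$ in the statement reflects exactly the difference between "the fibre stops growing" and "the fibre is already maximal", i.e. the interval boundaries in the three bullet points.

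**Carrying out the count.** I would prove the three bullet points by induction on $h$, or directly: localize at a preimage $y$ of $\eta_{z_0,r}$, compute $\HC(y)$ as (the completion of) $\HC(\eta_{z_0,r})(z_0^{1/p^h}(1+w)^{1/p^h})$, and observe that the $p$th-power reductions are controlled by whether $(1+w)^{1/p}-1$ has absolute value $\geq$ or $<$ the value forced by Lemma~\ref{lemme sur la distance entre deux racines de l'unité}. Since $|(1+w)^{1/p}-1|$ relates to $|w|$ by the standard binomial/Newton-polygon estimate for $p$th roots in mixed characteristic — namely $|(1+w)^{1/p}-1|=\max(|w|^{1/p},|w|/p^{1/(p-1)}\cdot\text{(unit)})$ with the breakpoint at $|w|=p^{-p/(p-1)}$ — this is precisely where the exponent $\frac{p}{p-1}$ enters, and iterating $h$ times gives the breakpoints $p^{-i-\frac{p}{p-1}}$ and $p^{-i-\frac1{p-1}}$. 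Then the fibre cardinality is $p^i$ where $i$ is the number of successive $p$th-power extractions that remain "invisible" (stay inside the residue field) at radius $r$.

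**Main obstacle.** The delicate point is the precise Newton-polygon analysis of $(1+w)^{1/p^h}$ over the Berkovich disk: one must control not just a single $p$th root but the interaction of all $p^h$ branches $\zeta(1+w)^{1/p^h}$ simultaneously at a type-$2$ or type-$3$ point of radius $r$, and correctly identify at which $r$ the reductions of the coordinate differences land in $\widetilde{k}^\times$ versus $\widetilde{k}\setminus\{0,\dots\}$ — in other words, pinning down the half-open versus closed nature of the intervals $[\alpha p^{-i-\frac{p}{p-1}},\alpha p^{-i-\frac1{p-1}}[$. I expect everything else (the reduction to $|z_0|=\alpha$, the cyclotomic distance computation, the inductive bookkeeping on $h$) to be routine given Lemma~\ref{lemme sur la distance entre deux racines de l'unité} and the results of Ducros recalled above.
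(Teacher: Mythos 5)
Your proof strategy is morally the same as the paper's---both hinge on Lemma~\ref{lemme sur la distance entre deux racines de l'unité} and on a Newton-polygon analysis of $p$th-power extraction with breakpoint at $p^{-p/(p-1)}$---but you analyze the preimage branches $(1+w)^{1/p^h}$ from below, whereas the paper computes the \emph{image} of a point explicitly: it evaluates $|(T-z_1^p)(f(\eta_{z_1,\rho}))|$ using $T^p-z_1^p = \sum_{i=1}^p\binom{p}{i}z_1^{p-i}(T-z_1)^i$, extends multiplicatively to all polynomials to show $f(\eta_{z_1,\rho}) = \eta_{z_1^p,\widehat{\rho}}$ with $\widehat{\rho} = \max\{\rho^p,\,p^{-1}\rho|z_1|^{p-1}\}$ (breakpoint at $\rho = |z_1|p^{-1/(p-1)}$), inverts this to get the preimage radius $\widetilde{r}$, and then counts distinct preimages $\eta_{\xi\widetilde{z_0},\widetilde{r}}$ by comparing $\widetilde{r}$ to $|\xi\widetilde{z_0}-\xi'\widetilde{z_0}| = \alpha^{1/p}p^{-1/(p-1)}$, finally recursing on $h$. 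Your observation that the cyclotomic-norm argument generalizes to give $|\xi-\xi'| = p^{-1/(p^{m-1}(p-1))}$ for $p^m$th roots via $\Phi_{p^m}$ is correct and would give a legitimate alternative to that recursion.

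The issue is that, as written, your submission is a plan rather than a proof. You yourself flag the ``main obstacle''---controlling all $p^h$ branches simultaneously at the Berkovich point of radius $r$, and pinning down the half-open versus closed boundaries of the intervals $[\alpha p^{-i-p/(p-1)},\alpha p^{-i-1/(p-1)}[$---and you leave it unresolved; but that obstacle is precisely the substance of the proposition. The moment you write ``wait, one must be careful'' is exactly where the real work begins and then stops. A complete argument along your lines would have to either push a single point forward under $z\mapsto z^p$ and invert (the paper's route), or explicitly compute the radii $|\zeta(1+w)^{1/p}-\zeta'(1+w')^{1/p}|$ and the resulting equivalence relation on $\mu_{p^h}$ at radius $r$ (your proposed route). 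In addition, your imprecise interpolation $|(1+w)^{1/p}-1| = \max(|w|^{1/p},|w|/p^{1/(p-1)}\cdot\text{(unit)})$ is not quite right on the small-radius branch---the smallest root satisfies $|v| = p|w|$, not $|w|p^{-1/(p-1)}$---which shows the Newton-polygon bookkeeping you call ``routine'' still needs to be pinned down carefully. Until one of these computations is carried through, the three bullet points are asserted rather than proved.
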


\begin{proof}
Let $f: \G_m^\an\rightarrow \G_m^\an$ be the covering given by $f(z)=z^{p}$. Let $z_1\in k^*$ and $\rho\in \R_+$ satisfying $\rho<\vert z_1 \vert$ (such that $\eta_{z_1, \rho} \notin ]0, \infty[$).\
In order to compute $f(\eta_{z_1,\rho})$, notice that for any polynomial $P\in k[T]$ :
$$\vert P\left(f(\eta_{z_1, \rho})\right) \vert = \vert (P\circ f)(\eta_{z_1, \rho})\vert= \vert P(T^p)(\eta_{z_1, \rho})\vert.$$
Thus, $\vert (T-z_1^p)(f(\eta_{z_1, \rho}))\vert=\vert (T^p-z_1^p)(\eta_{z_1, \rho})\vert$. Moreover : 
$$T^p-z_1^p=\sum_{i=1}^p \binom{p}{i}z_1^{p-i}(T-z_1)^i=\sum_{i=1}^p \gamma_i (T-z_1)^i ,$$
where $\gamma_i= \binom{p}{i}z_1^{p-i}$, with : 
$$
\vert \gamma_i \vert = \left\{
    \begin{array}{ll}
        1& \hbox{si } i=p \\
        p^{-1}\vert z_1 \vert ^{p-i} & \hbox{si } 1\leqslant i\leqslant p-1
    \end{array}
\right.
$$
Consequently, $\vert (T-z_1^p)(f(\eta_{z_1, \rho}))\vert=\max_{1\leqslant i \leqslant p} \lbrace\vert \gamma_i \vert \rho^i \rbrace=\max \lbrace \rho^p, \left( p^{-1}\rho^i \vert z_1 \vert^{p-i}\right)_{1 \leqslant i \leqslant p-1}  \rbrace $. Since we assumed $\rho< \vert z_1 \vert$, we get $\vert (T-z_1^p)(f(\eta_{z_1, \rho}))\vert= \max \lbrace \rho^p, p^{-1}\rho\vert z_1 \vert^{p-1}\rbrace $, that is to say : 
$$
\vert (T-z_1^p)(f(\eta_{z_1, \rho}))\vert = \left\{
    \begin{array}{ll}
         p^{-1}\rho \vert z_1 \vert^{p-1}& \hbox{si } \rho \leqslant \vert z_1 \vert p^{-\frac{1}{p-1}} \\
        \rho^p & \hbox{si } \rho \geqslant \vert z_1 \vert p^{-\frac{1}{p-1}}
    \end{array}
\right.
$$
Define $\widehat{\rho}:=\vert (T-z_1^p)(f(\eta_{z_1, \rho}))\vert $. As $f(\eta_{z_1, \rho})$ is a multiplicative seminorm, for any $k\in \N$ we have  
$\vert (T-z_1^p)^k(f(\eta_{z_1, \rho}))\vert=\widehat{\rho}^k$. By writing : 
$$(T^p-z_1^p)^k=\sum_{j=0}^{pk}\lambda_{k,j}(T-z_1)^j,$$
we obtain : $\widehat{\rho}^k= \max_{0\leqslant j\leqslant pk}\lbrace \vert\lambda_{k,j}\vert\rho^j\rbrace$.\

Let $P=\sum_{k=0}^n\alpha_k(T-z_1^p)^k\in k[X]$ be a polynomial. Up to defining $\lambda_{k, j}:=0$ if $j>pk$,
one can write : \begin{align*}
P(T^p)&=\sum_{k=0}^n \alpha_k(T^p-z_1^p)^k=\sum_{k=0}^n \alpha_k \sum_{j=0}^{pn}\lambda_{k,j}(T-z_1)^j\\
&=\sum_{j=0}^{pn}\underbrace{ \left(\sum_{k=0}^n\alpha_k\lambda_{k,j}\right)}_{:=\widetilde{\alpha}_j}(T-z_1)^j.
\end{align*}
Then we have : 
\begin{align*}
\vert P(f(\eta_{z_1,\rho}))\vert&= \max_{0\leqslant j\leqslant pn}\lbrace \vert \widetilde{\alpha}_j \vert \rho^j\rbrace=\max_{0\leqslant k\leqslant n} \left\lbrace  \vert \alpha_k\vert \cdot\max_{0\leqslant j\leqslant pn}\lbrace \vert\lambda_{k,j}\vert \rho^j\rbrace \right\rbrace\\
&=\max_{0\leqslant k\leqslant n}\lbrace \vert \alpha_k\vert \widehat{\rho}^k \rbrace
\end{align*}
Therefore $f(\eta_{z_1, \rho})=\eta_{z_1^p, \widehat{\rho}}$, which can be written : 

$$
f(\eta_{z_1, \rho})= \left\{
    \begin{array}{ll}
         \eta_{z_1^p,p^{-1}\rho \vert z_1 \vert^{p-1}}& \hbox{si } \rho \leqslant \vert z_1 \vert p^{-\frac{1}{p-1}} \\
        \eta_{z_1^p, \rho^p} & \hbox{si } \rho \geqslant \vert z_1 \vert p^{-\frac{1}{p-1}}
    \end{array}
\right.
$$

\item Let's try to find the preimages by $f$ of $\eta_{z_0, r}$, where $0\leqslant r<\alpha:=\vert z_0 \vert$. Define : 

$$
\widetilde{r} = \left\{
    \begin{array}{ll}
         rp \alpha^{-\frac{p-1}{p}}& \hbox{si } r \leqslant \alpha p^{-\frac{p}{p-1}} \\
        r^{\frac{1}{p}} & \hbox{si } r \geqslant \alpha p^{-\frac{p}{p-1}} 
    \end{array}
\right.
$$
From what is above, if $\widetilde{z_0}$ is a $p^{\mathrm{th}}$-root of $z_0$, then : $$\eta_{\widetilde{z_0}, \widetilde{r}}\in f^{-1}\left(\lbrace \eta_{z_0,r}\rbrace \right),$$ et $f^{-1}\left(\lbrace \eta_{z_0,r}\rbrace \right)$ consists of all conjugates $\eta_{\xi\widetilde{z_0}, \widetilde{r}}$ of $\eta_{\widetilde{z_0}, \widetilde{r}}$ for $\xi\in \mu_p$. Therefore : 
$$
f^{-1}\left(\lbrace \eta_{z_0,r}\rbrace \right) = \left\{
    \begin{array}{ll}
         \lbrace \eta_{\xi \widetilde{z_0},rp\alpha^{-\frac{p-1}{p}}}\rbrace_{\xi\in \mu_p} & \hbox{si } r \leqslant \alpha p^{-\frac{p}{p-1}} \\
         \lbrace \eta_{\xi \widetilde{z_0},r^{\frac{1}{p}}}\rbrace_{\xi\in \mu_p} & \hbox{si } r \geqslant \alpha p^{-\frac{p}{p-1}} 
    \end{array}
\right.
$$
Since $\vert \widetilde{z_0} \vert= \alpha^{\frac{1}{p}}$, we have $\vert \xi \widetilde{z_0}- \xi' \widetilde{z_0}\vert=\alpha^{\frac{1}{p}} p^{-\frac{1}{p-1}}$ as soon as $\xi\neq \xi' \in \mu_p$, from lemma \ref{lemme sur la distance entre deux racines de l'unité}. Thus, $f^{-1}\left(\lbrace \eta_{z_0,r}\rbrace \right)$ has a unique element if $r \geqslant \alpha p^{-\frac{p}{p-1}}$, $p$ otherwise. 
\item For the general case, with $h\geqslant 1$, a recursive reasoning on $h$ leads to the conclusion.

\end{proof}
\begin{figure}
\includegraphics[width=1\textwidth]{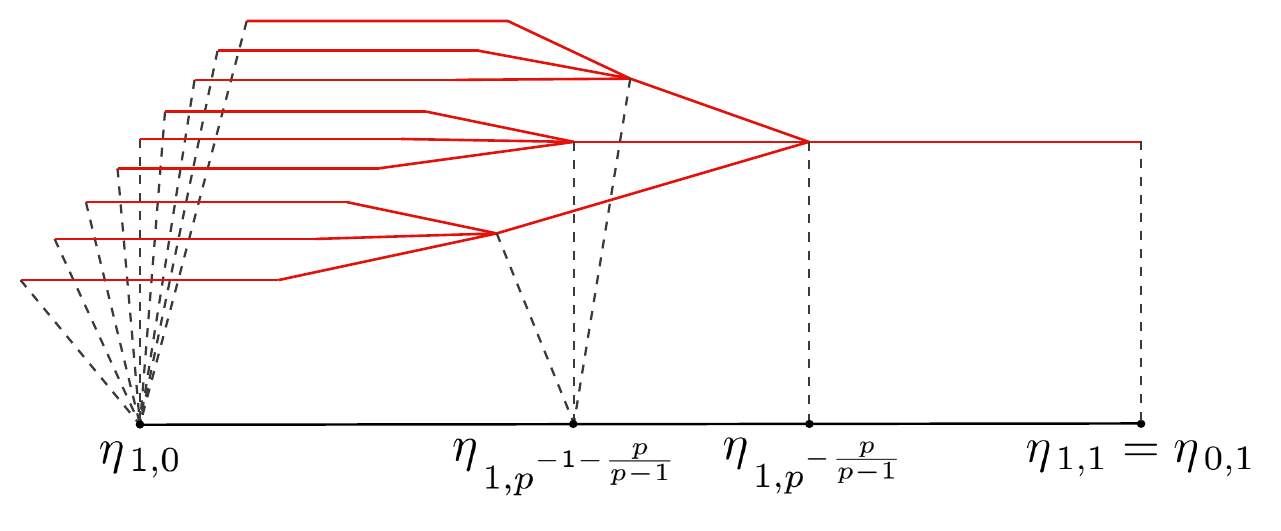}
\caption{Covering $\G_m^\an \xrightarrow{z\mapsto z^{9}} \G_m^\an$ with $p=3$, $h=2$ and $z_0=1$.}
\end{figure}

\subsection{Cochain morphism}\label{sous-section morphisme de cochaîne}

We are going to define the important notion of \emph{$\Z/n\Z$-cochain} associated to a $\mu_n$-torsor. This is exactly from a close look at the behaviours of such cochains of torsors that it will be possible, in section \ref{section anabélianité partielle de la longueur des couronnes}, to extract some information about lengths of annuli.

\begin{defi}[Harmonic cochains]\label{cochaînes harmoniques} 
Let $\Gamma$ be a locally finite graph, and $A$ an abelian group. A \emph{harmonic $A$-cochain on $\Gamma$} is map  $c : \lbrace \text{oriented edges of } \Gamma\rbrace \rightarrow A$ satisfying the two following conditions : 
\begin{enumerate}
\item if $e$ and $e'$ correspond to the same edge with its two different orientations : $c(e')=-c(e)$. 
\item if $x$ is a vertex of $\Gamma$ : $$\sum_{\text{edges oriented towards }x}c(e)=0_A. $$
\end{enumerate}
The set of harmonic $A$-cochains of $\Gamma$ forms an abelian group denoted $\mathrm{Harm}(\Gamma, A)$. In the following, we will simply write $A$-cochains, or cochains when $A$ is explicit.

\end{defi}

Let $X$ be a non-empty $k$-analytic curve with skeleton $\G=S^\an(X)$, and truncated skeleton $\G^\natural=S^\an(X)^\natural$.

\begin{lem}\label{existence du morphisme de cochaîne}
Let $n\in \N^\times$ :
\begin{itemize}
\item  there exists a natural morphism : $$H^1(X,\mu_n)\xrightarrow{\theta} \mathrm{Harm}(\G,\Z/n\Z),$$

\item in the case where $X$ has finite skeleton, does not have any point of genus $>0$, is without boundary and has only annular cusps, then the image of $\theta$ contains $\mathrm{Harm}(\G^\natural,\Z/n\Z)$ (seen as a subgroup of $\mathrm{Harm}(\G,\Z/n\Z)$ by prolongation of cochains by $0$ on all cuspidal edges of $\G$).
\end{itemize}

\end{lem}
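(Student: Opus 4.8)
The plan is to construct $\theta$ locally, edge by edge, from the structure of $\mu_n$-torsors over annuli, and then check the harmonicity axioms using the behaviour at vertices. First I would recall that by Proposition \ref{propriétés kummériennes des revêtements de couronnes}, for each vicinal edge $e$ of $\G$ the associated annulus $\CC_e = r_X^{-1}(\overset{\circ}{e})$ satisfies $H^1(\CC_{e,\et},\mu_n)\simeq \Kum_n(\CC_e)$ — here I must be a little careful since $n$ is not assumed prime to $p$, so I would instead use only the Kummer subgroup $\Kum_n(\CC_e) = \OO_{\CC_e}(\CC_e)^\times/(\OO_{\CC_e}(\CC_e)^\times)^n$ together with the group $\mathscr{Z}(\CC_e)$ of Remark \ref{remark about annuli}. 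Given a class $\alpha \in H^1(X,\mu_n)$, restrict it to $\CC_e$; over an annulus every $\mu_n$-torsor need not be Kummer when $p\mid n$, but its image in the quotient measuring the "growth rate" still makes sense: concretely, pull back $\alpha$ to an invertible function $f_e$ defining the torsor (after restricting to a slightly smaller annulus if necessary, or passing through the tame quotient), extract the degree $i_0$ of its strictly dominant monomial (Remark \ref{remarque conditions d'inversibilité}), and set $\theta(\alpha)(e) := i_0 \bmod n$, with the orientation of $e$ fixing the sign of $i_0$. This is well-defined because two functions defining isomorphic torsors differ by an $n$-th power times a unit of the form $k^\times(1+\OO^{\circ\circ})$, which changes $i_0$ by a multiple of $n$.

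The antisymmetry axiom (1) is immediate: reversing the orientation of $e$ corresponds to replacing the coordinate $T$ on $\CC_e$ by $T^{-1}$, hence $i_0 \mapsto -i_0$. The substantive point is the harmonicity axiom (2) at each vertex $v \in \Sigma_X$. Here I would look at the star $\St(v,X)$ and its associated residual curve $\mathscr{C}_v$ of type-$2$ point $v$: restricting $\alpha$ to $\St(v,X)$ gives a $\mu_n$-torsor, and the degrees $i_0(e)$ for the edges $e$ abutting to $v$ are exactly the ramification/branching data of this torsor along the punctures of $\mathscr{C}_v$ corresponding to those edges. The relation $\sum_{e \to v} i_0(e) \equiv 0 \pmod n$ is then the statement that the sum of the "residues" of a $\mu_n$-torsor on a punctured curve vanishes — equivalently, that a $\mu_n$-torsor on the nodal/residual curve extends across, or the degree-zero condition on the associated line bundle. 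I would make this precise by reducing to the tame part via the specialization map $H^1(\St(v,X),\mu_n) \to H^1(\mathscr{C}_v \setminus \{\text{punctures}\}, \mu_n)$ and invoking the standard fact that the sum of local monodromies of such a covering around all punctures is trivial in $\Z/n\Z$ (this is where the hypothesis "locally finite graph" and the finiteness of the number of branches at $v$ is used).

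For the second bullet, assume $\G$ is finite, $X$ has no point of genus $>0$, no boundary, and only annular cusps. I would argue surjectivity onto $\mathrm{Harm}(\G^\natural,\Z/n\Z)$ by explicitly building, for a given harmonic cochain $c$ supported on the vicinal edges, a $\mu_n$-torsor realizing it. Since there are no positive-genus points and no boundary, $X$ is (locally, near its skeleton) a union of $\PP^1$-pieces glued along annuli — a "Mumford-type" situation — so I can prescribe on each vicinal annulus $\CC_e$ the Kummer torsor given by $T_e^{c(e)}$ and then glue these over the vertices: the harmonicity of $c$ at $v$ is exactly the compatibility condition (vanishing total degree on $\mathscr{C}_v \simeq \PP^1$ minus finitely many points) that allows the local torsors to be patched into a global $\mu_n$-torsor on $\St(v,X)$, and hence by Mayer–Vietoris / van Kampen over the finite graph $\G$, into a global class on $X$. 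The main obstacle I anticipate is precisely the gluing step when $p \mid n$: over an annulus a $\mu_{p^h}$-torsor is not determined by its degree $i_0$ alone, so I must be careful that the construction of $\theta$ only remembers the well-defined $\Z/n\Z$-datum and that surjectivity is established by the explicit Kummer torsors (which \emph{are} controlled by their degree), not by an abstract cohomological surjection; handling the vertices of $\G\setminus\G^\natural$ and confirming the prolongation-by-zero identification is then routine bookkeeping.
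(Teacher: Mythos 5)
Your first bullet follows essentially the same mechanism as the paper — locally express the $\mu_n$-torsor by an invertible function and read off the degree of its strictly dominant monomial — but you implement it edge-by-edge on $\G$, whereas the paper works with small open neighbourhoods $\mathscr{U}$ over which the underlying $\G_m$-torsor is trivial (using Berkovich's fact that every étale $\G_m$-torsor is topological, so $_nH^1(X,\G_m)$ dies locally) and then glues. That route buys a uniform treatment of all points, including those lying on non-annular cusps, and it forces the paper to verify explicitly that the support of $\theta_{\mathscr{U}}(f)$ is contained in $S^\an(X)\cap\mathscr{U}$ (via the triviality of $\mathrm{Pic}$ of closed disks and the constancy of norms of units there) — a point your version sidesteps but should still address if you want to see $\theta(\alpha)$ as a cochain on $\G$ rather than on each $S^\an(\mathscr{U})$. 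Your verification of harmonicity through the vanishing of the sum of local monodromies around the punctures of the residual curve $\mathscr{C}_v$ is a genuine addition: the paper asserts without comment that the degree data define an element of $\mathrm{Harm}(S^\an(\mathscr{U}),\Z)$, so your residue argument fills in a step the paper leaves implicit.

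For the second bullet you take a genuinely different route. The paper does not rebuild the torsor at all: it completes $X$ to a proper curve $X'$ by gluing a disk onto each annular cusp, observes that $X'$ is the analytification of a Mumford curve with $\G^\natural=S^\an(X')$, cites Lepage's theorem that $\overline{\theta}:H^1(X',\mu_n)\to\mathrm{Harm}(\G^\natural,\Z/n\Z)$ is surjective, and then pulls back along $X\hookrightarrow X'$ via the obvious commutative triangle. Your approach — prescribing $T_e^{c(e)}$ on each vicinal annulus and then patching over stars via a Mayer–Vietoris/van Kampen argument on the finite graph — amounts to reproving Lepage's surjectivity from scratch. It is in spirit viable (it is exactly what Lepage does under the hood), but as written it has gaps: you need to exhibit an actual $\mu_n$-torsor on $\St(v,X)$ whose restrictions to the edge annuli are the prescribed Kummer torsors (harmonicity only gives the necessary numerical compatibility, not the existence of the extension; this requires using that the residual curve is $\P^1$, hence $\mathrm{Pic}$ and the relevant $H^2$ vanish), and you then need to check that the local torsors on overlapping stars are actually isomorphic on the shared edge annulus — not merely that they have equal cochain value, which is weaker when $p\mid n$ (as you yourself flag). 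The paper's reduction to Lepage avoids all of this bookkeeping and is the cleaner choice; if you want to keep a self-contained construction you should make the existence-of-extension and uniqueness-on-overlaps steps precise, most conveniently by working on the proper Mumford completion $X'$ rather than trying to glue over the possibly non-compact curve $X$ directly.
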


\begin{proof}
The exact Kummer sequence gives the following exact sequence :
$$1\rightarrow \mathscr{O}_X(X)^\times/\left(\mathscr{O}_X(X)^\times \right)^n\rightarrow H^1(X, \mu_n)\rightarrow \,_nH^1(X, \G_m)\rightarrow 1,$$
where $_nH^1(X, \G_m)$ denotes the $n$-torsion subgroup of $H^1(X, \G_m)$.\

Moreover $H^1(X, \G_m)=H^1_{\mathrm{top}}(X, \G_m)$ : any étale $\G_m$-torsor is topological, this comes from \cite{Ber2} ($4.1.10$).\ 

Let $h\in H^1(X, \mu_n)$ and $\overline{h}$ its image in $_nH^1(X,\G_m)$. Thus, if $x\in X$, there exists an open neighbourhood $\mathscr{U}$ of $x$ in $X$ such that $\overline{h}$ is trivial on $\mathscr{U}$. Then $h_{\vert \mathscr{U}}$ comes from a function $f\in \mathscr{O}_{\mathscr{U}}(\mathscr{U})^\times$ defined modulo $nth$ powers. There is a natural morphism : $\ds{ \mathscr{O}_{\mathscr{U}}(\mathscr{U})^\times\xrightarrow{\theta_{\mathscr{U}}} \mathrm{Harm}(S^\an(\mathscr{U}), \Z)}$ which factorises through : $$\ds{ \mathscr{O}_{\mathscr{U}}(\mathscr{U})^\times/\left( \mathscr{O}_{\mathscr{U}}(\mathscr{U})^\times\right)^n \rightarrow \mathrm{Harm}\left(S^\an(\mathscr{U}), \Z/n\Z\right)}.$$ This morphism $\theta_{\mathscr{U}}$ can be constructed in the following way : if $e$ is an oriented edge of $S^\an(\mathscr{U})$ and $r$ is the canonical retraction of $\mathscr{U}$ on its skeleton, then $r^{-1}(e)$ is isomorphic to some open annulus of $\P_k^{1,\an}$ defined by the condition $\lbrace 1<\vert T\vert < \lambda \rbrace$, where the beginning of the edge corresponds to $1$, whereas the end corresponds to $\lambda$.\

Let $\tau : \lbrace z\in \P_k^{1,\an},1<\vert T(z)\vert < \lambda \rbrace \iso r^{-1}(e)$ be such an isomorphism, and $\alpha\in \mathscr{O}_{\mathscr{U}}(\mathscr{U})^\times$. There exists a unique $m\in \Z$ such that $\alpha\circ\tau$ is written $z\mapsto z^m g(z)$ with $g$ of constant norm. This comes from the characterization of invertibility of analytic functions on an annulus, and $m$ is the degree of the unique strictly dominant monomial of $\alpha\circ \tau$. It is enough to write $\theta_{\mathscr{U}}(\alpha)(e)=m$, this defines an element of $\mathrm{Harm}(S^\an(\mathscr{U}), \Z)$.\\

We have $S^\an(X)\cap \mathscr{U}\subseteq S^\an(\mathscr{U})$, but the inclusion can be \textit{a priori} strict. However, we are going to show that the support of $\theta_\mathscr{U}(f)$ (i.e. the set of edges $e$ of $S^\an(\mathscr{U})$ such that $\theta_{\mathscr{U}}(f)(e)\neq 0$) is included in $S^\an(X)\cap \mathscr{U}$. Let $e$ be an oriented edge of $S^\an(\mathscr{U})$ not included in $S^\an(X)$. If $y\in e$, $y$ belongs to an open disk $\mathcal{D}$ of $X$. Then there exists a closed disk $\mathcal{D}_0\varsubsetneq\mathcal{D}$ containing $y$ in its interior. As $\mathcal{D}_0$ is a closed disk, its Picard group $\mathrm{Pic}\left(\mathcal{D}_0\right)$ is trivial. Therefore, the $\mu_n$-torsor $h_{\vert \mathcal{D}_0}$ is given by a function $f_0\in \mathscr{O}_{\mathcal{D}_0}(\mathcal{D}_0)^\times$. Moreover, any invertible function on a closed disk has constant norm, hence the cochain associated to $f_0$ at a neighbourhood of $y$ is trivial. In particular, $\theta_{\mathscr{U}\cap \mathcal{D}_0}(f_0)$ is the trivial cochain on $S^\an(\mathscr{U}\cap \mathcal{D}_0)$. Moreover, all these local construction are compatible between each other : $\theta_{\mathscr{U}\cap \mathcal{D}_0}(f_0)=\theta_{\mathscr{U}\cap \mathcal{D}_0}(f)$. Thus $\theta_{\mathscr{U}}(f)(e)=0$, so the support of $\theta_{\mathscr{U}}(f)$ is included in $S^\an(X)\cap \mathscr{U}$.

These local constructions $x\mapsto \theta_{\mathscr{U}}(f)$ can be glued together to finally give a morphism: $H^1(X,\mu_n)\rightarrow \mathrm{Harm}(\G,\Z/n\Z)$.\\

For the second point, let's first explain how to embed $X$ in the analytification of a Mumford $k$-curve : let $X'$ be a proper $k$-analytic curve obtained from $X$ by prolongation of each cusp by a disk. Then $X'$ is the analytification $\mathscr{X}^\an$ of a Mumford $k$-curve $\mathscr{X}$. Moreover $\G^\natural=S^\an(X')$ : the annular cusps of $X$ do not appear anymore in the skeleton of $X'$ since they are prolonged by disks. \

As $\G^\natural=S^\an(X')$, from \cite{Lep$2$} we have a morphism $\overline{\theta} : H^1(X',\mu_n)\rightarrow \mathrm{Harm}(\G, \Z/n\Z)$ whose image exactly equals $\mathrm{Harm}(\G^\natural, \Z/n\Z)$. If $\iota$ denotes the embedding of $X$ in $X'$, there is a commutative diagram : 
$$\xymatrix{ H^1(X',\mu_n) \ar[rr]^{\iota^*} \ar[rd]_{\overline{\theta}} && H^1(X,\mu_n) \ar[ld]^\theta \\ & \mathrm{Harm}(\G, \Z/n\Z)  }$$
which is enough to conclude that $ \mathrm{Harm}(\G^\natural,\Z/n\Z)\subseteq \im(\theta)$.

\end{proof}

\begin{rem}As the morphism $H^1(X,\mu_n)\xrightarrow{\theta} \mathrm{Harm}(\G,\Z/n\Z)$ exists for any $n$, we will now consider $\theta$ as an map :

$$\theta : \bigsqcup_{n\in \N^\times} H^1(X,\mu_n)\rightarrow \bigsqcup_{n\in \N^\times}\mathrm{Harm}(\G,\Z/n\Z)  $$ which induces for each $n\in \N^\times$ a morphism $H^1(X,\mu_n)\rightarrow \mathrm{Harm}(\G,\Z/n\Z)$. \end{rem}

\subsection{Cochains and minimality of the splitting radius on an annulus}

Let $\mathcal{C}$ be a $k$-analytic annulus of finite length, $\alpha\in k$ and $\eta_{\alpha,0}=\eta_\alpha$ (sometimes simply denoted $\alpha$) the assiocated rigid point of $\P_k^{1,\an}$. We are going to show that $\mu_p$-torsors with non-trivial cochains modulo $p$ on $\CC$ satisfy a minimality condition which enables to make them out from trivial cochain torsors.

\begin{defi}
If $X$ is a $k$-analytic curve and $f\in H^1(X,\mu_n)$, let $\mathscr{D}(f)$ denotes the set of points of $X$ over which the analytic torsor defined by $f$ totally splits. 
\end{defi}

\begin{defi}[Splitting radius of a torsor on a rigid point]

Assume $\CC$ is the subannulus  of $\P_k^{1,\an}$ defined by $\vert T\vert \in I$, where $I$ is an interval of $\R_{>0}$. If $\eta_\alpha\in \mathcal{C}$, for any torsor ${f\in H^1(\mathcal{C}, \mu_{p})}$, let $\varrho_f(\alpha)$ be the \emph{splitting radius of $f$ in $\alpha$}, defined by : $$\varrho_f(\alpha)=\sup\left\{r\in]0,\vert \alpha\vert[, \eta_{\alpha, r}\in \mathscr{D}(f)  \right\}   $$

\end{defi}

The following proposition shows how one can detect with this notion the triviality of the $\Z/p\Z$-cochain $\theta(f)$.

\bigskip
\begin{prop}\label{minimalité des rayons de déploiement}
Fix the rigid point $\eta_\alpha\in \CC$, then  $\varrho_f(\alpha)$ is minimal exactly when the cochain of $f\in  H^1(\mathcal{C}, \mu_{p})$ is non-trivial modulo $p$, i.e. when $f\notin \ker(\theta)$.\

More precisely :
\begin{itemize}
\item[•]$f\notin \ker(\theta)$ if and only if $\ds{\varrho_f(\alpha)=\vert \alpha \vert \,p^{-\frac{p}{p-1}}},$
\item[•]$f\in \ker(\theta)$ if and only if $\ds{\varrho_f(\alpha)>\vert \alpha \vert\, p^{-\frac{p}{p-1}}}$.
\end{itemize}
\end{prop}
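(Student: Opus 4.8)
The plan is to reduce everything to the explicit computation of Proposition \ref{décomposition du torseur sauvage} after bringing the torsor $f$ into a normal form. First I would use Proposition \ref{propriétés kummériennes des revêtements de couronnes}, item (3): since $p$ is \emph{not} prime to the residual characteristic here, that proposition does not directly apply, so instead I need the structure of $H^1(\CC,\mu_p)$ via the Kummer exact sequence $1\to \OO_\CC(\CC)^\times/(\OO_\CC(\CC)^\times)^p\to H^1(\CC,\mu_p)\to {}_pH^1(\CC,\G_m)\to 1$, together with the fact that $\mathrm{Pic}(\CC)$ is trivial for an annulus (any line bundle on an annulus is trivial), so that $H^1(\CC,\mu_p)\simeq \OO_\CC(\CC)^\times/(\OO_\CC(\CC)^\times)^p$. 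Writing an invertible function on $\CC=\CC_I$ as $f=a_{i_0}T^{i_0}(1+u)$ with $\|u\|<1$, I would observe that modulo $p$-th powers we may absorb $a_{i_0}$ into a $p$-th power times a representative, and, more importantly, that $1+u$ with $\|u\|<1$ is \emph{not} automatically a $p$-th power (this is exactly where mixed characteristic enters). So the class of $f$ is determined by $i_0 \bmod p$ — which is precisely $\theta(f)$ read on the unique edge of $S^\an(\CC)$ — together with the class of $1+u$ in $(1+\OO_\CC(\CC)^{\circ\circ})/\big((1+\OO_\CC(\CC)^{\circ\circ})\cap(\OO_\CC(\CC)^\times)^p\big)$.

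Next I would split into the two cases. If $f\notin\ker(\theta)$, then $i_0\not\equiv 0 \bmod p$; after a coordinate change (multiplying $T$ by a unit and replacing $f$ by $f$ times a $p$-th power, using Corollary \ref{caractérisation des fonctions coordonnées} to adjust) the torsor becomes, locally around $\eta_\alpha$, the pullback of the standard torsor $z\mapsto z^p$ on $\G_m^\an$ — or more precisely its fibre over $\eta_\alpha$ is controlled by the map $z\mapsto z^{i_0}$ composed with $z\mapsto z^p$, and since $\gcd(i_0,p)=1$ the extra factor $z\mapsto z^{i_0}$ is a coordinate automorphism near $\eta_\alpha$ that preserves total splitting. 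Then Proposition \ref{décomposition du torseur sauvage} with $h=1$ gives directly that $\eta_{\alpha,r}$ lies in $\mathscr{D}(f)$ iff $r<\vert\alpha\vert p^{-\frac{p}{p-1}}$, whence $\varrho_f(\alpha)=\vert\alpha\vert p^{-\frac{p}{p-1}}$, the minimal possible value. Conversely, if $f\in\ker(\theta)$, then $i_0\equiv 0\bmod p$, so after modifying by a $p$-th power we may assume $i_0=0$, i.e. $f=1+u$ with $\|u\|<1$ (up to a constant, which does not affect splitting after a further $p$-th-power adjustment, since $k$ is algebraically closed so every constant is a $p$-th power). For such $f$ the torsor $T^p=1+u$ already splits on a strictly larger disk around $\eta_\alpha$: near a rigid point where $|u|$ is small, $\sqrt[p]{1+u}$ converges (one estimates the radius of convergence of the binomial series $\sum\binom{1/p}{n}u^n$, whose $n$-th term has norm controlled by $|u|^n p^{n/(p-1)}$, so it converges and defines an actual section whenever $|u|<p^{-p/(p-1)}$ on the relevant disk), giving $\varrho_f(\alpha)>\vert\alpha\vert p^{-\frac{p}{p-1}}$. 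This needs the elementary fact that $|u|_{\eta_{\alpha,r}}\to 0$ as $r\to 0$ for $u\in\OO_\CC(\CC)^{\circ\circ}$ vanishing appropriately — or more carefully, that $|u|_{\eta_{\alpha,r}}$ is bounded by a quantity tending to something $<p^{-p/(p-1)}$ on a small enough disk, which forces convergence of the $p$-th root.

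The main obstacle I expect is the case $f\in\ker(\theta)$ with $i_0=0$: showing that the splitting radius is \emph{strictly} larger than $\vert\alpha\vert p^{-\frac{p}{p-1}}$ in \emph{all} such cases, uniformly, rather than just for the trivial torsor. The subtlety is that $1+u$ need not be a $p$-th power on all of $\CC$ (so the torsor can be non-trivial), yet its restriction to a sufficiently small disk around any rigid point \emph{is} a $p$-th power; the quantitative statement — that "sufficiently small" can be taken with radius exceeding $\vert\alpha\vert p^{-\frac{p}{p-1}}$ — requires the sharp estimate on the $p$-th-root binomial series combined with the maximum-modulus behaviour of $u$ along the segment $[\eta_\alpha,\eta_{\alpha,|\alpha|}]$. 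I would handle this by noting that on the closed disk $B(\alpha, |\alpha|p^{-1/(p-1)})$ — which is strictly larger than the critical disk of radius $|\alpha|p^{-p/(p-1)}$ — the function $u$ restricted there has norm $<1$, and after checking that the torsor restricted to this disk is trivial (closed disk has trivial Picard group, so $1+u = g^p$ there iff a certain obstruction in $(\OO^\times/(\OO^\times)^p)$ of the disk vanishes, and on a disk every unit of norm-ratio-$1$ close to $1$ is a $p$-th power once inside the convergence radius), deduce that $\eta_{\alpha,r}\in\mathscr{D}(f)$ for all $r<|\alpha|p^{-1/(p-1)}$, in particular for some $r>|\alpha|p^{-p/(p-1)}$. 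Finally I would assemble the two implications into the biconditional, noting that $\varrho_f(\alpha)\geq \vert\alpha\vert p^{-\frac{p}{p-1}}$ always (so minimality is equivalent to equality), which follows because the binomial series always converges on a disk of radius $|\alpha|p^{-p/(p-1)}$ regardless of $u$ — giving the clean dichotomy stated.
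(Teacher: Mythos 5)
Your treatment of the case $f\notin\ker(\theta)$ is essentially the same as the paper's: absorb the unit $1+u$ by an $n$-th root (convergent since $n$ is prime to $p$), reduce to the standard Kummer torsor $T\mapsto T^p$, and invoke Proposition~\ref{décomposition du torseur sauvage}. That part is sound.

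The case $f\in\ker(\theta)$ has a genuine gap, and it is exactly at the point you flag as the "main obstacle." Your proposed fix --- that on $B(\alpha, |\alpha|p^{-1/(p-1)})$ the function $u$ has norm $<1$, and that on a disk "every unit of norm-ratio-$1$ close to $1$ is a $p$-th power once inside the convergence radius" --- is false. The binomial series for $(1+u)^{1/p}$ converges only when $|u|<p^{-p/(p-1)}$, not $|u|<1$; and the function $1+T$ on a disk of radius $p^{-1/(p-1)}$ around $0$ is \emph{not} a $p$-th power (Proposition~\ref{décomposition du torseur sauvage} shows the covering $z\mapsto z^p$ is still ramified there, with a single preimage over $\eta_{1,r}$ for $p^{-p/(p-1)}\leqslant r\leqslant p^{-1/(p-1)}$). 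Restricting to a disk of radius $<p^{-p/(p-1)}$ in the variable $u$ is also not the same as restricting to a disk of radius $<|\alpha|p^{-p/(p-1)}$ in $T-\alpha$: you need a quantitative relation between the two, and your sketch never establishes it.

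The ingredient you are missing is the Taylor expansion $g(t+\alpha)=\sum_{i\geqslant 0}A_i t^i$ together with the coefficient estimate $|A_i|<|\alpha|^{-i}$ for $i\geqslant 1$. This strict inequality comes directly from the invertibility criterion (Proposition~\ref{conditions d'inversibilité}): after normalizing $a_0=1$, one has $|a_k|\,|\alpha|^k<1$ strictly for all $k\neq 0$, hence by the ultrametric bound $|A_i|=|\sum_{k\neq 0}a_k\binom{k}{i}\alpha^{k-i}|<|\alpha|^{-i}$. Writing $v(t)=\sum_{i\geqslant 1}A_it^i$, the torsor splits over $\eta_{\alpha,r}$ once $|v(\eta_{0,r})|<p^{-p/(p-1)}$, and the estimate yields
$$\varrho_f(\alpha)\geqslant \min\left\{|A_1|^{-1}p^{-\frac{p}{p-1}},\,|\alpha|\,p^{-\frac{1}{2}\cdot\frac{p}{p-1}}\right\}>|\alpha|\,p^{-\frac{p}{p-1}},$$
where the strictness of the first term is precisely $|A_1|^{-1}>|\alpha|$, i.e. the strictness of the dominance of the constant term. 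Without this expansion and estimate there is no control on how fast $|u|_{\eta_{\alpha,r}}$ grows with $r$, and the "restrict to a small disk" heuristic alone does not give the required strict inequality. Note also that the same expansion is what justifies your parenthetical remark that $\varrho_f(\alpha)\geqslant|\alpha|p^{-p/(p-1)}$ always holds; it does not follow from convergence of the binomial series "regardless of $u$," since one must first bound $|u|_{\eta_{\alpha,r}}$ in terms of $r$.
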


\begin{proof}
The exact Kummer sequence gives a morphism $$\ds{\mathscr {O}_{\CC}(\mathcal{C})^\times/(\mathscr {O}_{\CC}(\mathcal{C})^\times)^p \hookrightarrow H^1(\mathcal{C},\mu_p)}$$ which becomes an isomorphism when one restricts to any compact subannulus, because of the triviality of the Picard group of any $k$-affinoid subspace. Up to restricting $\CC$, one can assume $f$ is given by a function $g\in \mathscr{O}_{\CC}(\mathcal{C})^\times$, which means that the associated analytic torsor is defined by :
$\ds{\mathscr{O}_{\CC}(\mathcal{C})[S]/(S^p-g)}$.\

Studying the splitting radius of $f$ along the interval $[\eta_\alpha,\eta_{\alpha, \vert \alpha \vert}]$ amounts to doing the change of coordinate function $t:= T-\alpha$ and studying the convergence of $\sqrt[p\;]{g(t+\alpha)}$.\\

\begin{itemize}
\item[•]
Assume $f\notin \ker(\theta)$.There exists $n\in \Z\setminus \lbrace 0 \rbrace$, prime to $p$, such that $g$ has growth rate $n$, i.e. $n$ is the degree of the strictly dominant monomial : $g(T)=a_nT^n(1+u(T))$, with $\vert u\vert <1$ on $\mathcal{C}$. After normalization ($k$ is algebrically closed), one can assume $a_n=1$.\

The series defining $\sqrt[n\,]{1+T}$ has convergence radius equal to $1$ since $n$ is prime to $p$. Therefore, there exists a function $v(T)$ of norm $<1$ on $\CC$ such that $(1+v)^n= 1+u$, so $g(T)=\left(T(1+v)\right)^n$. As $T(1+v)$ is a coordinate function, we can assume $g(T)=T^n$. Since $n$ is prime to $p$, the two $\mu_p$-torsors given by functions $T^n$ et $T$ have the same sets of total splitting, so we can assume $g(T)=T$. Then the result is given by proposition \ref{décomposition du torseur sauvage}.\\

\item[•]Assume $f\in \ker(\theta)$. This means the degree of the strictly dominant monomial of $g$ (the growth rate) is $0$ modulo $p$ : there exists $m\in\Z$ such that $g(T)=a_0T^{mp}(1+u(T))$, with $\vert u\vert <1$ on $\mathcal{C}$. 

Up to division by $T^{mp}$ (it is the class of $g$ modulo $(\mathscr{O}(\mathcal{C})^\times)^p$ which determines the torsor $f$), we can take $m=0$. Let's write 
$$g(T)=\sum_{k\in \Z}a_k T^k.$$
Thus, for all $r\in I$ and $k\in \Z\setminus \lbrace 0\rbrace$, $\vert a_k\vert r^k< \vert a_0 \vert$. Up to normalization and restriction to a subannulus, we can assume that $a_0=1$ and that the extremities of the interval $I$ (open or closed), are $1-\varepsilon$ and $1$ for some $\varepsilon \in ]0,1[$. In this case, for all $k\in \N^\times$, we have $\vert a_k \vert<1$ and $\vert a_{-k} \vert < (1-\varepsilon)^k$. For all $i\geqslant 0$ and $k\in \Z$, let's write ${{k\choose i}=\frac{k(k-1)\ldots (k-i+1)}{i!}}$. Using the generalised binomial expansion, write :
\begin{align*}
g(t+\alpha)&= \sum_{k\in \Z}a_k (t+\alpha)^k\\
                  &= \sum_{k\in \Z}a_k\left(\sum_{i\geqslant 0} {k\choose i}\alpha^{k-i}t^i \right)\\
                  &=\sum_{i\geqslant 0}\, \underbrace{\left(\sum_{k\in \Z}a_k {k\choose i}\alpha^{k-i}\right)}_{A_i} t^i=\sum_{i\geqslant 0} A_i t^i .
\end{align*} 

We have $\vert \alpha \vert \leqslant 1$ since $\eta_\alpha\in \mathcal{C}$, which implies $\vert A_0 \vert = \vert a_0\vert=1$. Writing $v(t)=\sum_{i\geqslant 1}A_it^i$, proposition \ref{décomposition du torseur sauvage} states that the torsor $f$ splits totally on $\eta_{\alpha, r}$ as soon as $\ds{\vert v(\eta_{0,r}) \vert <p^{-\frac{p}{p-1}}}$. Consequently, $\varrho_f(\alpha)\geqslant r$ if $\vert A_i \vert r^i<p^{-\frac{p}{p-1}}$ for all $i\geqslant 1$. Therefore : 
$$\varrho_f(\alpha)\geqslant \inf_{i\geqslant 1} \left\{ \sqrt[i]{\vert A_i \vert^{-1}}p^{-\frac{1}{i}\left(\frac{p}{p-1}  \right)}   \right\}.  $$ Moreover, for all $k\in \Z\setminus \lbrace 0\rbrace$, $\vert a_k \alpha^k \vert <1$. Then, all $i\geqslant 1$ satisfy : $\vert A_i\vert <\vert \alpha\vert^{-i}$, so : 
$$\sqrt[i]{\vert A_i \vert ^{-1}}p^{-\frac{1}{i}\left(\frac{p}{p-1}\right)}>\vert\alpha\vert\, p^{-\frac{1}{i}\left(\frac{p}{p-1}\right)}.$$ 
We deduce : $$\varrho_f(\alpha)\geqslant \min\left\{\vert A_1\vert^{-1}p^{-\frac{p}{p-1}}, \vert\alpha\vert\, p^{-\frac{1}{2}\left(\frac{p}{p-1}\right)}   \right\}> \vert \alpha \vert \,p^{-\frac{p}{p-1}}.$$
\end{itemize}

\end{proof}

\begin{rem}
If $h>1$, it is not true anymore that $\varrho_f(\alpha)$ is minimal if and only if $f\in  H^1(\mathcal{C}, \mu_{p^h})$ has a non-trivial $\Z/p^h\Z$-cochain, i.e. when $f\notin \ker(\theta)$. It is not difficult to show that if $f$ has a cochain \emph{prime to $p$}, then  : $$\ds{\varrho_f(\alpha)=\vert \alpha \vert \,p^{-h-\frac{1}{p-1}}}.$$ However, if $f'\in H^1(\mathcal{C}, \mu_{p^h})$ is the element corresponding to the function $T^p\in \mathscr{O}_{\CC}(\mathcal{C})^\times$, then its $\Z/p^h\Z$-cochain $\theta(f')$ is non-trivial since it equals $p$, but one can show that its splitting radius on $\alpha$ satisfies : 
 $$\ds{\varrho_{f'}(\alpha)\geqslant\vert \alpha \vert \,p^{1-h-\frac{1}{p-1}}}=p\varrho_f(\alpha)>\varrho_f(\alpha),$$ implying that $\varrho_{f'}(\alpha)$ is not minimal even though $f'\notin \ker(\theta)$. Moreover, if the annulus $\CC$ is for instance given by the condition $\vert T\vert \in ]1-\varepsilon, 1[$ with $\varepsilon>0$, the torsor $g\in H^1(\CC,\mu_{p^h})$ given by the function $1+T\in \OO_{\CC}(\CC)^\times$ has trivial cochain, so belongs to $\ker(\theta)$, but its splitting radius on a rigid point $\alpha=\eta_{\alpha}$ is :
 $$\varrho_g(\alpha)=p^{-h-\frac{1}{p-1}}.$$ Consequently, as soon as $\vert \alpha \vert \in ]\frac{1}{p}, 1[$, we have : $\varrho_g(\alpha)<\varrho_{f'}(\alpha)$.
\end{rem}

\begin{coro}\label{corollaire, détection de la nullité de la cochaîne sur une couronne}
If $f\in  H^1(\mathcal{C}, \mu_{p})$, the triviality of the cochain corresponding to $f$ can be detected set-theoretically from the splitting sets of the different $\mu_{p}$-torsors on $\mathcal{C}$  : 

\begin{itemize}
\item[•]\center{$\ds{f\notin \ker(\theta)\;\Longleftrightarrow \;\mathscr{D}(f)_{[2]}\subseteq \bigcap_{f'\in H^1(\mathcal{C}, \mu_{p})}\mathscr{D}(f')_{[2]}}$,}

\center{\begin{align*}
\;\;\;\;\;\bullet \;\;f\in \ker(\theta)\;&\Longleftrightarrow \;\exists f'\in H^1(\mathcal{C}, \mu_{p}), \mathscr{D}(f)_{[2]} \nsubseteq \mathscr{D}(f')_{[2]}\\
&\Longleftrightarrow \forall f'\in H^1(\mathcal{C}, \mu_{p})\setminus\ker(\theta),  \mathscr{D}(f)_{[2]} \nsubseteq \mathscr{D}(f')_{[2]}.
\end{align*}}
\end{itemize}

\end{coro}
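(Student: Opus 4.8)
The plan is to deduce this corollary directly from Proposition \ref{minimalité des rayons de déploiement} by translating the statement about splitting radii at rigid points into a statement about the type-$2$ points of the splitting sets. First I would record the key dichotomy coming from that proposition: for every rigid point $\eta_\alpha \in \CC$ and every $f \in H^1(\CC,\mu_p)$, one has $\varrho_f(\alpha) = \vert\alpha\vert\,p^{-\frac{p}{p-1}}$ if $f \notin \ker(\theta)$, and $\varrho_f(\alpha) > \vert\alpha\vert\,p^{-\frac{p}{p-1}}$ if $f \in \ker(\theta)$. Since $\varrho_f(\alpha)$ is by definition the supremum of the radii $r$ for which $\eta_{\alpha,r} \in \mathscr{D}(f)$, and since type-$2$ points along the segment $[\eta_\alpha, \eta_{\alpha,\vert\alpha\vert}]$ are exactly the $\eta_{\alpha,r}$ with $r \in \vert k^\times\vert$, the condition $f \notin \ker(\theta)$ is equivalent to saying that $\mathscr{D}(f)$ contains no point $\eta_{\alpha,r}$ of type $2$ with $r > \vert\alpha\vert\,p^{-\frac{p}{p-1}}$ on any such segment; whereas $f \in \ker(\theta)$ forces $\mathscr{D}(f)$ to contain some $\eta_{\alpha,r}$ of type $2$ strictly closer to $\eta_\alpha$ than the threshold value.

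Next I would prove the first equivalence. If $f \notin \ker(\theta)$, then for any other torsor $f'$ and any type-$2$ point $x = \eta_{\alpha,r} \in \mathscr{D}(f)$ with $r < \vert\alpha\vert$, necessarily $r \leqslant \vert\alpha\vert\,p^{-\frac{p}{p-1}} = \varrho_f(\alpha)$, and since the splitting radius is minimal among all $\mu_p$-torsors, we get $r \leqslant \varrho_{f'}(\alpha)$, hence $x \in \mathscr{D}(f')$; taking into account that $\CC$ is covered by the segments $[\eta_\alpha,\eta_{\alpha,\vert\alpha\vert}]$ as $\alpha$ ranges over rigid points (together with the fact that any type-$2$ point of $\CC$ lies on the skeleton or on such a segment, and that total splitting over $\eta_{\alpha,r}$ is governed entirely by the radial computation of Proposition \ref{décomposition du torseur sauvage}), this shows $\mathscr{D}(f)_{[2]} \subseteq \bigcap_{f'} \mathscr{D}(f')_{[2]}$. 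Conversely, if $\mathscr{D}(f)_{[2]}$ is contained in every $\mathscr{D}(f')_{[2]}$ then in particular it is contained in $\mathscr{D}(f_0)_{[2]}$ for a torsor $f_0$ of nontrivial cochain (which exists: e.g. the torsor of the coordinate function), so $\varrho_f(\alpha) \leqslant \varrho_{f_0}(\alpha) = \vert\alpha\vert\,p^{-\frac{p}{p-1}}$ for every rigid $\alpha$, which by Proposition \ref{minimalité des rayons de déploiement} means $f \notin \ker(\theta)$.

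The second display is then just the contrapositive of the first, combined with the observation that when $f \in \ker(\theta)$ the strict inequality $\varrho_f(\alpha) > \vert\alpha\vert\,p^{-\frac{p}{p-1}}$ lets us produce, for \emph{any} given $f' \notin \ker(\theta)$, a type-$2$ point $\eta_{\alpha,r}$ with $\varrho_{f'}(\alpha) = \vert\alpha\vert\,p^{-\frac{p}{p-1}} < r < \varrho_f(\alpha)$ (using $\vert k^\times\vert$ dense in $\R_{>0}$), which lies in $\mathscr{D}(f)_{[2]}$ but not in $\mathscr{D}(f')_{[2]}$; this gives the implication towards the strong right-hand side, and the implication towards the weak right-hand side is immediate, while the weak right-hand side implies $f\in\ker(\theta)$ by the already proved first equivalence. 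The main obstacle I anticipate is bookkeeping rather than conceptual: one must check carefully that testing on type-$2$ points along radial segments $[\eta_\alpha,\eta_{\alpha,\vert\alpha\vert}]$ through rigid points genuinely captures all of $\mathscr{D}(f)_{[2]}$ — i.e. that a type-$2$ point of $\CC$ not of the form $\eta_{\alpha,r}$ with $\vert\alpha\vert$ a rigid point does not cause trouble — and that the comparison of splitting behaviour reduces, after the coordinate changes used in the proof of Proposition \ref{minimalité des rayons de déploiement}, to the purely radial situation of Proposition \ref{décomposition du torseur sauvage}; once that reduction is in hand, everything else is a direct translation of inequalities between real numbers.
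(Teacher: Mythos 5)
Your argument is correct and is exactly the route the paper has in mind: its one-line proof cites Proposition \ref{minimalité des rayons de déploiement} together with density of type-$2$ points, and you have simply spelled out the translation of splitting radii into splitting sets, including the necessary density step to produce a type-$2$ point between $\vert\alpha\vert\,p^{-\frac{p}{p-1}}$ and $\varrho_f(\alpha)$ and the existence of a torsor of non-trivial cochain. The one loose end you yourself flagged (type-$2$ points on the skeleton) is harmless: when $f\notin\ker(\theta)$ the reduction in the proof of Proposition \ref{minimalité des rayons de déploiement} identifies the splitting behaviour with that of $z\mapsto z^p$, which by Proposition \ref{décomposition du torseur sauvage} never totally splits over a skeleton point, so $\mathscr{D}(f)_{[2]}$ consists only of points on radial segments, and your argument applies verbatim (also note the small slip ``strictly closer to $\eta_\alpha$'' should read ``strictly farther from $\eta_\alpha$'').
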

\begin{proof}
It is a direct consequence of \ref{minimalité des rayons de déploiement} coupled with the density of $X_{[2]}$ in $X$.
\end{proof}

\subsection{Characterisation of $\mu_p$-torsors with trivial cochain}

The study led so far, which gives a set-theoretic characterization of $\mu_p$-torsors with trivial $\Z/p\Z$-cochain, only deals with $k$-analytic annuli. In order to extend these considerations, we will need a definition and a few restrictions.

\begin{defi}
An edge $e$ of a graph $\mathbb{H}$ is a \emph{bridge} if an only if the map $\pi_0( \mathbb{H}\setminus \lbrace e\rbrace)\to \pi_0(\mathbb{H})$ is not injective, which happens when the edge $e$ "separates" several connected components of $\mathbb{H}\setminus \lbrace e\rbrace$. The graph is said to be \emph{without bridge} when none of its edges is a bridge.
\end{defi}

\begin{prop}\label{proposition de détection du noyau pour les p cochaines}
Let's come back to the $k$-analytic curve $X$ considered in the second part of lemma \ref{existence du morphisme de cochaîne} : without boundary, of finite skeleton, without  points of genus $>0$, and whose cusps are annular. Assume moreover that $\G=S^\an(X)$ is without bridge and there is never strictly more than one cusp coming from each node.\

If $f\in  H^1(X, \mu_{p})$, then $f\in \ker(\theta)$ if and only if, for any vicinal edge $e$ of $S^\an(X)$ of associated annulus $\mathcal{C}_e$, there exists $f_e\in  H^1(X, \mu_{p})$ such that : $$\left(\mathscr{D}(f)_{[2]}  \setminus \mathscr{D}(f_e)_{[2]}    \right)\cap \mathcal{C}_{e\,[2]}\neq \emptyset.$$ 
\end{prop}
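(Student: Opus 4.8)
The plan is to reduce the global statement on $X$ to the local statement on annuli already proved in Corollary~\ref{corollaire, détection de la nullité de la cochaîne sur une couronne}, using the cochain morphism $\theta$ and its harmonicity. Recall that, by Lemma~\ref{existence du morphisme de cochaîne}, $\theta(f)$ is a harmonic $\Z/p\Z$-cochain on $\G=S^\an(X)$ whose support lies on the \emph{vicinal} edges when $f\in\ker(\theta)$... more precisely, the point is that $\ker(\theta)$ consists exactly of those $f$ whose associated cochain vanishes on every edge, and that $f\notin\ker(\theta)$ iff $\theta(f)(e)\neq 0$ for at least one vicinal edge $e$. So the first reduction is: $f\in\ker(\theta)$ iff for every vicinal edge $e$, the restriction $f|_{\CC_e}\in H^1(\CC_e,\mu_p)$ has trivial cochain on the annulus $\CC_e$; this is immediate from the compatibility of $\theta$ with restriction to sub-analytic domains (the local construction $\theta_{\mathscr U}$ in the proof of Lemma~\ref{existence du morphisme de cochaîne} is exactly the annular cochain on each $\CC_e$).

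Next I would feed in Corollary~\ref{corollaire, détection de la nullité de la cochaîne sur une couronne}: for the annulus $\CC_e$, $f|_{\CC_e}\in\ker(\theta)$ iff there exists $g_e\in H^1(\CC_e,\mu_p)$ with $\mathscr D(f|_{\CC_e})_{[2]}\nsubseteq \mathscr D(g_e)_{[2]}$, i.e. iff there is a type-$2$ point of $\CC_e$ over which $f$ splits totally but some other $\mu_p$-torsor of $\CC_e$ does not. The remaining task is then a \emph{globalisation/localisation of torsors} statement: I must show that (i) every $g_e\in H^1(\CC_e,\mu_p)$ is the restriction of some $f_e\in H^1(X,\mu_p)$, so that the witness found on the annulus comes from a global torsor, and conversely (ii) the splitting locus behaves well under restriction, namely $\mathscr D(f_e)_{[2]}\cap\CC_{e\,[2]}=\mathscr D(f_e|_{\CC_e})_{[2]}$, which is essentially the definition of total splitting being local on the base. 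Point (ii) is formal. Point (i) is where the hypotheses enter: since $X$ has finite skeleton, no positive-genus point, no boundary and only annular cusps, $X$ embeds in the analytification $X'=\mathscr X^\an$ of a Mumford curve with $\G^\natural=S^\an(X')$ (as in Lemma~\ref{existence du morphisme de cochaîne}), and by Lepage's surjectivity result $\overline\theta:H^1(X',\mu_p)\twoheadrightarrow\mathrm{Harm}(\G^\natural,\Z/p\Z)$ together with the Kummer description of $H^1$ of annuli, one realises any prescribed cochain value on a single vicinal edge $e$ by a global class; the condition ``$\G$ is without bridge and no node carries more than one cusp'' is exactly what guarantees that $\mathrm{Harm}(\G^\natural,\Z/p\Z)$ separates edges sufficiently — on a bridge or near a doubled cusp a harmonic cochain would be forced to vanish or to be constrained, obstructing the construction of a torsor witnessing non-containment on that particular edge. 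So I would spell out: given $g_e$ on $\CC_e$ with nonzero cochain value $c\in\Z/p\Z$, build a harmonic cochain on $\G^\natural$ supported near $e$ (possible since $e$ is not a bridge, using that removing $e$ still leaves the two endpoints in the same component, so one can route a compensating flow through a cycle), lift it via $\overline\theta$ to $f_e\in H^1(X',\mu_p)$, restrict to $X$, and check its splitting locus does the job on $\CC_e$.

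The main obstacle I expect is precisely the realisation step (i): controlling the splitting set $\mathscr D(f_e)_{[2]}\cap\CC_{e\,[2]}$ of the \emph{global} torsor $f_e$ in terms of its cochain value on $e$ alone. On a single annulus this is Proposition~\ref{minimalité des rayons de déploiement} (minimality of the splitting radius detects non-triviality of the cochain), so what is needed is that $f_e$ restricted to $\CC_e$ still has the nonzero cochain value $c$ one engineered — which requires that the lift $\overline\theta(f_e)$ really has $e$-coordinate $c$ and not just ``$c$ up to the combinatorial constraints'' — and that no interference from other edges spoils the local splitting behaviour on $\CC_e$. This is handled by the ``no doubled cusp / no bridge'' hypothesis and the fact that $\theta$ is compatible with restriction, but making the harmonic-cochain bookkeeping precise (choosing a cycle through $e$, assigning values so that the cochain is harmonic and has the right value on $e$) is the delicate combinatorial heart of the argument; once that is in place, the equivalence follows by combining the two reductions above with Corollary~\ref{corollaire, détection de la nullité de la cochaîne sur une couronne} edge by edge.
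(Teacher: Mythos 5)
Your overall strategy---reduce to annuli via restriction-compatibility of $\theta$, apply Corollary~\ref{corollaire, détection de la nullité de la cochaîne sur une couronne} edge by edge, and realise a nonzero cochain value globally via the surjectivity of $\overline\theta$ onto $\mathrm{Harm}(\G^\natural,\Z/p\Z)$---is exactly the paper's. There are, however, two points that need fixing before this becomes a proof.

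The more serious one is that you have misplaced the hypothesis ``no node carries more than one cusp.'' You invoke it, together with ``no bridge,'' to argue that $\mathrm{Harm}(\G^\natural,\Z/p\Z)$ separates edges; but cusps are by definition absent from $\G^\natural$, so this hypothesis imposes no constraint whatsoever on $\mathrm{Harm}(\G^\natural,\Z/p\Z)$, and only ``no bridge'' is relevant to the non-vanishing of $\mathrm{ev}_e$. Where the no-doubled-cusp hypothesis is actually needed is in your very first reduction, which you assert without justification: $\theta(f)$ lives in $\mathrm{Harm}(\G,\Z/p\Z)$ with $\G$ the full skeleton including cusps, so ``$\theta(f)(e)=0$ for all vicinal $e$'' does not by itself imply $\theta(f)=0$. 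What makes it true here is harmonicity at each node together with the bound on cusps: if every vicinal edge at a node vanishes and at most one cusp abuts to that node, the harmonicity relation forces the cusp's value to vanish as well. With two cusps at a node one could have a nonzero harmonic cochain supported entirely on cusps, and the reduction would fail. As written, your plan leaves this step open and assigns the hypothesis to a place where it is vacuous.

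The second point is milder: your statement (i) as first phrased---that \emph{every} $g_e\in H^1(\CC_e,\mu_p)$ extends to a global $f_e\in H^1(X,\mu_p)$---is far stronger than needed and false in general ($H^1(\CC_e,\mu_p)$ is much too large). You do self-correct to ``realise any prescribed cochain value on $e$,'' which is what is actually used; it suffices because Proposition~\ref{minimalité des rayons de déploiement} (equivalently, the last equivalence in Corollary~\ref{corollaire, détection de la nullité de la cochaîne sur une couronne}) gives $\mathscr D(f|_{\CC_e})_{[2]}\nsubseteq\mathscr D(g_e)_{[2]}$ for \emph{every} $g_e$ of nonzero local cochain, so exhibiting a single such $g_e$ that comes from a global class is enough.
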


\begin{proof}
The assumption that there is never strictly more than one cusp coming from a node implies that a cochain $c\in \mathrm{Harm}(\G,\Z/p\Z)$ is trivial if and only if it is trivial on all vicinal edges of $\G$.

\bigskip
\begin{itemize}
\item[•] Assume that for any vicinal edge $e$ of $\G$ of corresponding annulus $\mathcal{C}_e$, there exists $f_e\in  H^1(X, \mu_{p})$ such that : $\left(\mathscr{D}(f)_{[2]}  \setminus \mathscr{D}(f_e)_{[2]}    \right)\cap \mathcal{C}_{e\,[2]}\neq \emptyset.$ Let $f_{e\, \vert\, \mathcal{C}_e}$ and $f_{\vert \,\mathcal{C}_e}$ be the restrictions of $f_e$ and $f$ to $\mathcal{C}_e$. Then we have $\mathscr{D}(f_{\vert \,\mathcal{C}_e})_{[2]} \nsubseteq \mathscr{D}(f_{e\, \vert\, \mathcal{C}_e})_{[2]}$. With corollary \ref{corollaire, détection de la nullité de la cochaîne sur une couronne}, it implies $\theta(f)(e)=0$. But it is true for any vicinal edge $e$, so $\theta(f)$ is the trivial cochain.
\bigskip
\item[•] Let $f\in  H^1(X, \mu_{p})$, $e$ a vicinal edge of annulus $\mathcal{C}_e$, and assume $f\in \ker(\theta)$. From \ref{minimalité des rayons de déploiement}, as $\theta(f)(e)=0$, we have $\mathscr{D}(f_{\vert \,\mathcal{C}_e})_{[2]} \nsubseteq \mathscr{D}(g_e)_{[2]}$  for all $g_e\in  H^1(\mathcal{C}_e, \mu_{p})$ of non-trivial cochain.

It remains to show that there exists $f_e\in  H^1(X, \mu_{p})$ with non-trivial cochain at $e$.\

From the assumption, the edge $e$ is not a bridge of $\G$, so is neither a bridge of $\G^\natural$. Thus, the evaluation at $e$ : $$\mathrm{Harm}(\G^\natural,\Z/p\Z)\xrightarrow{\mathrm{ev_e}} \Z/p\Z$$ is non-zero. Let's choose $c_e\in \mathrm{ev}_e^{-1}\left( \Z/p\Z\setminus \lbrace 0\rbrace  \right)$, i.e. such that $c_e(e)\neq 0$. From lemma \ref{existence du morphisme de cochaîne}, the image of $\theta$ contains $\mathrm{Harm}(\G^\natural, \Z/p\Z)$. It is enough to take $f_e\in \theta^{-1}(\lbrace c_e \rbrace )$ : we have $\mathscr{D}(f_{\vert \,\mathcal{C}_e})_{[2]} \nsubseteq \mathscr{D}(f_{e\, \vert\, \mathcal{C}_e})_{[2]}$, which can be written :
$$\left(\mathscr{D}(f)_{[2]}  \setminus \mathscr{D}(f_e)_{[2]}    \right)\cap \mathcal{C}_{e\,[2]}\neq \emptyset.$$

\end{itemize}
\end{proof}
\section{Resolution of non-singularities}\label{section résolution des non-singularités}

In algebraic geometry, resolution of non-singularities consists in knowing whether a hyperbolic curve $\mathscr{X}$ admits a finite cover $\mathscr{Y}$ whose stable reduction has some irreducible components above the smooth locus of the stable (or semi-stable) reduction of $\mathscr{X}$. Such techniques happened to be useful in anabelian geometry, see for instance \cite{PoSt} : if $X_0$ is a geometrically connected hyperbolic curve over a finite extension $K$ of $\Q_p$ such that $X_{0, \overline{\Q}_p}$ satisfies such resolution of non-singularities, then any section of $\pi_1^\mathrm{alg}(X_0)\to \Gal (\overline{\Q}_p/K)$ has its image in a decomposition group of a unique valution point.\\

Lepage shows in \cite{Lep$3$} that any Mumford curve over $\overline{\Q}_p$ satisfies a resolution of non-singularities, and applies this result to the anabelian study of the tempered group of such curves. He shows for instance that if $X_1$ and $X_2$ are two Mumford curves over $\overline{\Q}_p$ whose analytifications have isomorphic tempered fundamental groups, then $X_1^\an$ are $X_2^\an$ are naturally homeomorphic (\cite{Lep$3$}, Theorem $3.9$).

\subsection{Definition and properties of solvable points}\label{partie propriétés des points résolubles}

In the framework of this article, we are going to give a \emph{ad hoc} definition of \emph{solvable point}, and \emph{resolution of non-singularities}, in order to stay in the language of analytic geometry without entering in the considerations of (semi-)stable model.

\begin{defi}[Solvable point]
Let $X$ be a $k$-analytic quasi-smooth curve. We will say that a point $x\in X$ \emph{satisfies the resolution of non-singularities}, equivalently \emph{is solvable}, when there exists a finite étale covering $Y$ of $X$ and a node $y$ of $S^\an(Y)$ above $x$. This amounts to "singularising" $x$ to some node of some finite étale covering of $X$, whence the terminology. The set of \emph{solvable points} is denoted  $X_{\res}$.
\end{defi}

\begin{rem}One always has $X_{\res}\subseteq X_{[2]}$. We will say that $X$ \emph{satisfies resolution of non-singularities} when $X_{\res}= X_{[2]}$. Lepage shows in \cite{Lep$3$} (Theorem $2.6$) that the analytification of any Mumford curve over $\overline{\Q}_p$ satisfies resolution of non-singularities.  
\end{rem}

\begin{defi}
If $f\in H^1(X,\mu_n)$, define $\mathscr{D}(f)_{\res}:=\mathscr{D}(f)\cap X_{\res}$ as the set of solvable points of $X$ over which the analytic torsor defined by $f$ totally splits.
\end{defi}

Resolution of non-singularities has a specific anabelian flavour : from the tempered group $\pi_1^\tp(X)$ it is possible to determine the set of solvable points, as well as the set of solvable points belonging to an annulus defined by a vicinal edge, to the skeleton itself, or to the splitting sets of analytic torsors on $X$.

\bigskip
\textbf{Properties : } If $X$ is a $k$-analytically hyperbolic curve, the tempered fundamental group $\pi_1^\tp(X)$ enables to determine :
\begin{itemize}
 \item[•] \textit{the set $X_{\res}$ of solvable points of $X$,}
 \item[•] \textit{if $e$ is a vicinal edge of annulus $\mathcal{C}_e$,
the set $\CC_e\cap X_{\res}$,}
\item[•]\textit{the set $S^\an(X)_{\res}:=S^\an(X)\cap X_{\res}$ of solvable points belonging to the skeleton, }
 \item[•] \textit{if $f$ is a $\mu_n$-torsor of $X$, the set $\mathscr{D}(f)_{\res}$.}
 \end{itemize}  
 \bigskip
 More precisely :\

\begin{enumerate}
\item The decomposition groups $D_x$ of solvable points of $X$ in $\pi_1^\tp(X)$ correspond exactly to the maximal compact subgroups $D$ of $\pi_1^\tp(X)$ such that there exists a open finite index subgroup $H$ of $\pi_1^\tp(X)$ such that the image of $D\cap H$ by the natural morphism $H\to H^{\pp}$ is non-commutative.

\item Let $e$ be a vicinal edge of $S^\an(X)$, and $\mathcal{C}_e$ the associated annulus. If $D_x$ is a decomposition group of a point $x\in X_{\res}$, then $x\in \mathcal{C}_e$ if and only if the image $D_x^{\pp}$ of $D_x$ by the morphism $\pi_1^\tp(X)\to \pi_1^{\tp, \pp}(X)$ is some open in some vicinal subgroup $\pi_e$ associated to $e$.

\item Let $x\in X_{\res}$ be a solvable point. Let $D_x^{\pp}$ be a decomposition group of $x$ in $\pi_1^{\tp, \pp}(X)$, and $Y$ a finite étale covering such that there exists a node $y$ of $S^\an(Y)$ above $x$, which amounts to considering a open subgroup $H$ of $\pi_1^\tp(X)$ of finite index such that $\pi_y=D_x^{\pp}\cap H^{\pp}$ is non-commutative. Let $\iota$ be the morphism $H^{\pp}\to \pi_1^{\tp, \pp}$, then $\iota(\pi_y)$ is an open subgroup of $D_x^{\pp}$. There are three possibilities :
\begin{itemize}

\bigskip
\item[•]\textit{Case $1$ : } $x\notin S^\an(X)$, it is the case when $\iota(\pi_y)$ is trivial.\

\item[•]\textit{Case $2$ : }$x$ is a vertex of the skeleton, it is the case when $\iota(\pi_y)$ is not commutative. In this case $D_x^{\pp}=\pi_x$ is the only verticial subgroup containing $\iota(\pi_y)$, and from Lemma $3.51$ of \cite{Gau} it is also the commensurator of $\iota(\pi_y)$ in $\pi_1^{\tp,\pp}(X)$.\

\item[•]\textit{Case $3$ : }$x$ belongs to an egde $e$ of $S^\an(X)$, this is the case when $\iota(\pi_y)$ is non-trivial and commutative. In this case $D_x^{\pp}=\pi_e$, and $\pi_e$ is the only vicinal or cuspidal subgroup (according to the nature of the edge $e$) containing $\iota(\pi_y)$, it also equals the commensurator of $\iota(\pi_y)$ in $\pi_1^{\tp,\pp}(X)$.
\end{itemize}
\item Let $f\in H^1(X,\mu_n)$ and $D_x$ a decomposition group in $\pi_1^\tp(X)$ of a point $x\in X_{\res}$. Then the knowledge of $\pi_1^\tp(X),$ of $f$ (considered as morphism from $\pi_1^\tp(X)$ to $\Z/n\Z$) and of $D_x$ is enough to determine whether $x\in \mathscr{D}(f)$. 
\end{enumerate}

\bigskip
The point $(1)$, which appears in \cite{Lep$3$}, is a consequence of (\cite{Lep$4$}, prop. $10$) : if $D$ is a compact subgroup of $\pi_1^\tp(X)$, there exists $x\in X$ and a decomposition subgroup $D_x$ of $x$ in $\pi_1^\tp(X)$ such that $D\subseteq D_x$. Therefore, decomposition subgroups in $\pi_1^\tp(X)$ of points of $X$ are exactly the maximal compact subgroups of $\pi_1^\tp(X)$. The image $D_x^{\pp}$ of $D_x$ by the morphism $\pi_1^\tp(X)\to \pi_1^{\tp,\pp}(X)$ is trivial if $x$ does not belong to $S^\an(X)$, non-trivial and commutative (in fact isomorphic to $\widehat{\Z}^{\pp})$) if $x$ belongs to an edge of $S^\an(X)$, and non commutative if $x$ is a vertex of $S^\an(X)$.\

The point $(2)$ comes from the following fact : if $Y\xrightarrow{f} X$ is a finite étale Galois covering of group $G$ with a node $y$ of $S^\an(Y)$ resolving $x\in X_{\mathrm{r\acute{e}s}}$, there exists a canonical retraction $S^\an(Y)/G\twoheadrightarrow  f^{-1}(S^\an(X))/G\simeq S^\an(X)$, and the sub-graph $f^{-1}(S^\an(X))\subseteq S^\an(Y)$ is such that $f^{-1}(S^\an(X))\cap Y_{\res}$ is determined by the data of $\pi_1^\tp(X)$ and of an open subgroup of finite index $H\subseteq \pi_1^\tp(X)$ defining the covering $f$.\

The point $(3)$ can be interpreted as a consequence of lemmas $3.6$ and $3.8$ of \cite{Lep$3$}.\

For the point $(4)$, one needs to have in mind the following fact : if $Y\xrightarrow{f} X$ is a finite étale Galois covering given by an open subgroup $H\subseteq \pi_1^\tp(X)$, the data of the morphism $\iota : H^{\pp}\to \pi_1^{\tp,\pp}(X)$ enables to know the preimage by $f$ of a fixed node $x\in S^\an(X)$. In particular, when $f$ is a $\mu_n$-torsor, the data of $\iota$ enables to know whether $f$ totally splits over $x$ (cf. \cite{Lep$2$}, prop. $7$). Now if $x\in X_{\res}$, if $Z\xrightarrow{g} X$ is a finite étale Galois covering of group $G$ with a node $z\in S^\an(Z)$ which resolves $x$, and if $f\in H^1(X,\mu_n)$ corresponds to the analytic torsor $Y\to X$, the pull-back $Y\times_X Z\to Z$ inherites a natural action of $\mu_n\times G$. Triviality of $f$ over $x$ can be read on the action of $G\times \mu_n$ over $f^{-1}(x)$, i.e. over the $G$-orbit of $z$.

\subsection{Solvability of skeletons of annuli and "threshold" points}

Let $X$ be a quasi-smooth $k$-analytic curve without boundary, of finite skeleton, without points of genus $>0$, and whose cusps are annular.  We saw that $X$ can be considered as a non-empty open subset of the analytification $X'$ of a Mumford $k$-curve. However, one cannot \textit{a priori} conclude that $X$ satisfies resolution of non-singularities, since it is not defined over $\overline{\Q}_p$ : the proof of theorem $2.6$ of \cite{Lep$3$} does not work well anymore when $\vert k^\times \vert$ is uncountable. Nevertheless, we will only need a very weak version of resolution of non-singularities : it will be enough for us to have the solvability of type-$2$ points of skeletons of annuli, as well as the one of "threshold" branching points of $\mu_p$-torsors of non-trivial cochain.

\begin{lem}\label{lemme de résolubilité des points seuils}
Let $e$ be an edge of $S^\an(X)$ of corresponding annulus $\CC_e$, and $\alpha$ a rigid point of $\CC_e$ which does not belong to the skeleton of $\CC_e$. Let $r$ be the canonical retraction from $X$ to its skeleton, and $x_\alpha\in \CC_e$ the unique point of $]\alpha, r(\alpha)[$ situated to the distance $\frac{p}{p-1}$ of $r(\alpha)$. If $e$ is not a bridge of $\G$, then $x_\alpha\in X_{\res}$.
\end{lem}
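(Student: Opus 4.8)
The plan is to exhibit a finite étale covering of $X$ in which $x_\alpha$ becomes a branching vertex of the analytic skeleton, by combining a well-chosen $\mu_p$-torsor with the explicit description of the wild covering $z\mapsto z^p$ from Proposition~\ref{décomposition du torseur sauvage} and the splitting-radius computation of Proposition~\ref{minimalité des rayons de déploiement}.

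\emph{Producing the torsor.} Since $e$ is not a bridge of $\G$, it lies on a cycle of $\G$; as cuspidal (open) edges never belong to a cycle, that cycle already lies in $\G^\natural$, so the evaluation $\mathrm{ev}_e\colon \mathrm{Harm}(\G^\natural,\Z/p\Z)\to\Z/p\Z$ is surjective (take the cochain supported on the cycle with constant value $1$ along it, coherently oriented, and $0$ elsewhere). The curve $X$ satisfies exactly the hypotheses of the second part of Lemma~\ref{existence du morphisme de cochaîne}, so $\im(\theta)\supseteq\mathrm{Harm}(\G^\natural,\Z/p\Z)$; hence I can pick $f\in H^1(X,\mu_p)$ with $\theta(f)(e)\neq 0$ and let $Y\to X$ be the associated $\mu_p$-torsor, a (connected) finite étale covering of $X$. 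I will treat the case where $e$ is a vicinal edge; the cuspidal case is handled in the same way once one still disposes of a $\mu_p$-torsor of $X$ non-trivial along $e$.

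\emph{Local analysis at $\alpha$.} Fix a coordinate identifying $\CC_e$ with a subannulus $\{|T|\in I\}$ of $\G_m^\an$ in such a way that $\alpha=\eta_{z_0}$ with $|z_0|\in I^\circ$ and $r(\alpha)=\eta_{z_0,|z_0|}=\eta_{0,|z_0|}$. A representing function $g$ of $f|_{\CC_e}$ has growth rate $n$ prime to $p$ (it reduces mod $p$ to $\theta(f)(e)\neq0$), so, running the argument of the first bullet in the proof of Proposition~\ref{minimalité des rayons de déploiement} (normalize the dominant coefficient since $k$ is algebraically closed, extract an $n$-th root of the remaining $1+u$, absorb it in a new coordinate function, and use that $\gcd(n,p)=1$ leaves the splitting set unchanged), I may assume $f|_{\CC_e}$ is the $\mu_p$-torsor given by $T$, i.e. $\G_m^\an\xrightarrow{z\mapsto z^p}\G_m^\an$. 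Proposition~\ref{décomposition du torseur sauvage} then gives, over the segment $[\alpha,r(\alpha)]$: a single point of $Y$ above $\eta_{z_0,s}$ for $s\in[\,|z_0|p^{-\frac{p}{p-1}},|z_0|\,]$, and exactly $p$ points for $s<|z_0|p^{-\frac{p}{p-1}}$. Since $\log_p\bigl(|z_0|/(|z_0|p^{-\frac{p}{p-1}})\bigr)=\frac{p}{p-1}$, the point $x_\alpha$ of the statement is precisely $\eta_{z_0,|z_0|p^{-\frac{p}{p-1}}}$; write $\tilde x$ for its unique preimage in $Y$.

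\emph{Conclusion and main obstacle.} As $s$ crosses $|z_0|p^{-\frac{p}{p-1}}$ the fibre of $Y$ drops from $p$ points to one, so the $p$ sheets of $Y$ over the $\alpha$-side of the segment all abut to $\tilde x$, while over the $r(\alpha)$-side a single sheet joins $\tilde x$ to $\widetilde{r(\alpha)}$, which belongs to $f^{-1}(S^\an(X))\subseteq S^\an(Y)$. Thus $p\geq2$ pairwise distinct branches of $Y$ meet at $\tilde x$ together with a further branch reaching $S^\an(Y)$; I conclude that $\tilde x$ lies in no open disk of $Y$, hence $\tilde x\in S^\an(Y)$ and, being a branching point, is a node of $S^\an(Y)$ (it is of type $2$ because $|z_0|p^{-\frac{p}{p-1}}\in|k^\times|$, as $|k^\times|$ is divisible and $|p|=p^{-1}$). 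Therefore $x_\alpha\in X_{\res}$. The step I expect to require genuine care is this last one — excluding that $\tilde x$ is merely a branching point \emph{interior} to some open disk of $Y$; to settle it one must really use the explicit local model of Proposition~\ref{décomposition du torseur sauvage}, which presents $Y$ near $\tilde x$ as $p$ open disks glued to an annular ``stem'' at $\tilde x$, a curve whose skeleton is non-empty with $\tilde x$ as a vertex — together with the standard fact that finite étale morphisms pull the analytic skeleton of the base back into that of the source.
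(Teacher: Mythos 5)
Your strategy — produce a $\mu_p$-torsor $f_e$ with $\theta(f_e)(e)\neq 0$ using non-bridgeness of $e$ and Lemma~\ref{existence du morphisme de cochaîne}, reduce over $\CC_e$ to the Kummer cover $z\mapsto z^p$ via Proposition~\ref{minimalité des rayons de déploiement}, and locate the unique preimage of $x_\alpha$ with Proposition~\ref{décomposition du torseur sauvage} — is exactly the paper's. The paper's own proof is in fact terser than yours: it simply asserts that the unique preimage $y$ of $x_\alpha$ is a branching point of $Y$ and hence a node of $S^\an(Y)$, which is precisely the step you single out as requiring care.

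That step, however, is where your attempted justification breaks, and the ``local model'' you invoke is not the local model of $Y$ near $\tilde x$. Because the cochain of $f_e$ is non-trivial on $e$, the restriction of the torsor to a compact subannulus $\CC'\subset\CC_e$ containing $[\alpha,r(\alpha)]$ is Kummer, given after coordinate change by $g=T'^{\,n}$ with $\gcd(n,p)=1$; setting $U=S^aT'^{\,b}$ (with $an+bp=1$) shows the cover of $\CC'$ is itself an annulus $\CC_{(I')^{1/p}}$, with covering map $T'=U^p$. By Proposition~\ref{décomposition du torseur sauvage}, the preimage $\tilde x$ of $x_\alpha=\eta_{z_0,|z_0|p^{-p/(p-1)}}$ is $\eta_{\widetilde{z_0},\,|\widetilde{z_0}|p^{-1/(p-1)}}$, which sits at distance $\tfrac{1}{p-1}>0$ from the axis of that annulus; consequently $\tilde x$ is interior to the open disk $\{|U-\widetilde{z_0}|<|\widetilde{z_0}|\}$, an open analytic disk of $Y$. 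So $\tilde x\notin S^\an(Y)$, and the argument ``$p+1$ branches, one of them reaching $S^\an(Y)$, therefore $\tilde x$ is in no open disk'' is not valid: a point off the skeleton of an annulus also has infinitely many branches and at least one branch meeting the skeleton, and only the presence of two independent branches reaching the skeleton would force membership in the skeleton. Your claimed picture of ``$p$ open disks glued to an annular stem at $\tilde x$, a curve whose skeleton is non-empty with $\tilde x$ as a vertex'' does describe the preimage of a tube around $[\alpha,r(\alpha)]$, but the preimage of the full ball $\{|T-z_0|<|z_0|\}$ around $\alpha$ is the \emph{single} open disk $\{|U-\widetilde{z_0}|<|\widetilde{z_0}|\}$ — the $p$ sub-disks, the stem and $\tilde x$ all live inside it — and that disk has empty analytic skeleton. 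This is to be contrasted with the situation in Lemma~\ref{lemme de résolubilité du squelette d'une couronne}: there the function $1+T'$ has growth rate $0$, the cover of a compact subannulus is genuinely a ``spider'' (not an annulus), and the joint point of the spider really does avoid every open disk. Here, with growth rate prime to $p$, that does not happen, so the justification you propose does not close the gap left open in the paper's own one-line assertion.
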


\begin{proof}

From proposition \ref{minimalité des rayons de déploiement}, we know that any analytic $\mu_p$-torsor $Y\rightarrow X$ whose cochain is non-trivial modulo $p$ on $e$ is non-split with a unique preimage over each point of $[x_\alpha,r(\alpha)[$, and totally split over $[\alpha, x_\alpha[$. Moreover, such a $\mu_p$-torsor of $X$ with non-trivial cochain on $e$ exists : as in the proof of proposition \ref{proposition de détection du noyau pour les p cochaines}, the fact that $e$ is not a bridge implies that the set $$(\mathrm{ev}_e\circ \theta)^{-1}\left(\left(\Z/p\Z\right)^\times   \right) \subset H^1(X, \mu_p) $$ is non-empty.

If $Y\to X$ is such a torsor, the unique preimage $y\in Y$ of $x_\alpha$ is a branching point, thus a node of $S^\an(Y)$ living above $x_\alpha$.
Therefore, $x_\alpha$ is a solvable point of $X$.
\end{proof}

\begin{rem}
Such a point $x_\alpha$ situated to a distance $\frac{p}{p-1}$ of the skeleton is a "threshold" point. Indeed, travelling along the segment $[\alpha, r(\alpha)]$, $x_\alpha$ is exactly the threshold point until which any $\mu_p$-torsor with non-trivial $\Z_p/\Z$-cochain on $e$ totally splits.
\end{rem}

\begin{lem}\label{lemme de résolubilité du squelette d'une couronne}
Let $\CC$ be a $k$-analytic annulus. Then all the type-$2$ points of the analytic skeleton of $\CC$ are solvable : $\;\;S^\an(X)_{\res}=S^\an(X)_{[2]}.$
\end{lem}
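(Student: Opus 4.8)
The statement asserts that every type-$2$ point of the skeleton $S^\an(\CC)$ of a $k$-analytic annulus $\CC$ is solvable, i.e. becomes a node of the skeleton of some finite étale covering. The plan is to reduce to the concrete model $\CC = \CC_I$ sitting in $\P_k^{1,\an}$, where the skeleton is the segment $\{\eta_{0,r}\}_{r\in I}$, and to exhibit, for a fixed type-$2$ point $\eta_{0,\rho}\in S^\an(\CC)$ with $\rho\in\vert k^\times\vert$, a finite étale covering $Y\to\CC$ whose skeleton has a genuine branch point (valence $\geqslant 3$) above $\eta_{0,\rho}$. The natural candidate is a Kummer covering twisted so as to create branching: rather than a pure $\mu_\ell$-torsor $T\mapsto T^\ell$ (which only stretches the skeleton and produces no branching), I would use a covering adapted to a rigid point close to $\eta_{0,\rho}$, exactly in the spirit of Lemma~\ref{lemme de résolubilité des points seuils} but now working at a point of the skeleton itself.

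\textbf{Key steps.} First I would choose a rigid point $\alpha\in\CC$ with $r(\alpha)=\eta_{0,\rho}$ — that is, a type-$1$ point in the residue disk of $\eta_{0,\rho}$, so that the branch of $\CC$ leaving the skeleton at $\eta_{0,\rho}$ towards $\alpha$ is non-trivial (such $\alpha$ exists since $k$ is algebraically closed and $\rho\in\vert k^\times\vert$, so $\eta_{0,\rho}$ is genuinely of type $2$ with infinitely many residue directions). Second, I would produce a finite étale $\mu_p$-covering $Y\to\CC$ whose $\Z/p\Z$-cochain is non-trivial on the edge of $S^\an(\CC)$ through $\eta_{0,\rho}$: the edge of an annulus is not a bridge (removing it from the one-edge graph leaves the two boundary vertices, and in the compact sub-annulus picture the evaluation map $\mathrm{ev}_e$ is non-zero), so such a torsor exists by the same argument as in the proof of Proposition~\ref{proposition de détection du noyau pour les p cochaines} — concretely one can take the torsor attached to the coordinate function $T$, i.e. $\sqrt[p]{T}$. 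Third, I would invoke Proposition~\ref{décomposition du torseur sauvage} (equivalently Proposition~\ref{minimalité des rayons de déploiement}): along the segment $[\alpha, r(\alpha)]=[\alpha,\eta_{0,\rho}]$ the torsor totally splits up to the threshold point $x_\alpha$ at distance $\frac{p}{p-1}$ from $\eta_{0,\rho}$ and is non-split (with a single preimage) on the portion between $x_\alpha$ and $\eta_{0,\rho}$. But $\eta_{0,\rho}$ already lies on the skeleton of $\CC$, hence its unique preimage $y\in Y$ lies on $S^\an(Y)$ and carries the full ramification of the covering restricted to the edge $e$ as well as the two skeleton directions inside $S^\an(Y)$ lying over $e$; combined with the split residue direction towards $\alpha$, this forces $y$ to be a branch point, hence a node of $S^\an(Y)$. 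Therefore $\eta_{0,\rho}\in\CC_{\res}$, and since $S^\an(\CC)_{[3]}$ contributes nothing (type-$3$ points are never solvable, $X_{\res}\subseteq X_{[2]}$), we get $S^\an(\CC)_{\res}=S^\an(\CC)_{[2]}$.

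\textbf{Main obstacle.} The delicate point is verifying that the unique preimage $y$ of $\eta_{0,\rho}$ is genuinely a \emph{branch} point of $S^\an(Y)$, not merely a point of the skeleton of valence $2$. One must check that the two skeleton directions over $e$ and the lifted residue direction towards a preimage of $\alpha$ are honestly distinct in $Y$; this requires a local analysis of the covering $\sqrt[p]{T}$ near $\eta_{0,\rho}$, using that $\rho\in\vert k^\times\vert$ so that $\eta_{0,\rho}$ is a type-$2$ point whose residue curve is $\P^1_{\widetilde k}$ and the covering induces a non-trivial (wildly ramified) cover of this $\P^1$, creating at least one extra branch. An alternative, perhaps cleaner route avoiding genus-of-residue-curve bookkeeping is to split the edge: pick \emph{two} rigid points $\alpha_1,\alpha_2$ in distinct residue directions at $\eta_{0,\rho}$ and argue that after a suitable finite covering the threshold construction of Lemma~\ref{lemme de résolubilité des points seuils} applied simultaneously forces a valence-$\geqslant 3$ vertex over $\eta_{0,\rho}$; but the growth-rate computation of Proposition~\ref{décomposition du torseur sauvage} already pins down exactly where branching occurs, so I expect the direct argument to go through once the local picture at the type-$2$ point is spelled out.
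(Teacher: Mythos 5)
Your proposal has a genuine gap, and it lies precisely at the point you yourself flag as "the delicate point": with your choice of embedding and covering, the preimage $y$ of $\eta_{0,\rho}$ is \emph{not} a branch point of $S^\an(Y)$ — in fact it cannot be. You place $\CC$ in $\P_k^{1,\an}$ with skeleton $\{\eta_{0,r}\}_{r\in I}$, i.e.\ along the skeleton of $\G_m^\an$, and then take the Kummer torsor $\sqrt[p]{T}$. But this is exactly the covering $z\mapsto z^p$ of $\G_m^\an$, and its restriction $Y\to\CC$ makes $Y$ another annulus, $Y\simeq\CC_{I^{1/p}}$, whose skeleton is again a single segment. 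The unique preimage $y$ of $\eta_{0,\rho}$ lies on that segment with valence $2$. The branch of $\CC$ leaving the skeleton towards your auxiliary rigid point $\alpha$ does become a branch of $Y$ at $y$, but it lies entirely inside an open disk of $Y$ (the preimage of the residue disk of $\eta_{0,\rho}$ containing $\alpha$, which is again a disk for this cover), so it contributes nothing to $S^\an(Y)$. The hoped-for extra branch is also not produced at the residue-curve level: the induced map on residue curves along the skeleton is the Frobenius $\widetilde k(t)\to\widetilde k(t^{1/p})$, a \emph{purely inseparable} degree-$p$ map, so it is a bijection on residue directions and creates no new branching. You notice at the outset that $T\mapsto T^\ell$ "only stretches the skeleton and produces no branching", and this objection indeed kills $T\mapsto T^p$ just as much; the concrete torsor you then propose is precisely the one you ruled out.

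The paper avoids this by choosing a fundamentally different embedding. It realizes $\CC$ as the domain $\{\,|T-1|\in I\,\}$, so that $S^\an(\CC)$ is a sub-segment of the path $[\eta_1,\eta_{0,1}]$, which lies \emph{off} the skeleton of $\G_m^\an$. After rescaling so that the given type-$2$ point is $x=\eta_{1,p^{-p/(p-1)}}$, it takes for $Y$ the preimage of $\CC$ under the covering $\G_m^\an\xrightarrow{z\mapsto z^p}\G_m^\an$. This covering is \emph{not} a Kummer $\mu_p$-torsor of $\CC$ — it restricts to the $\mu_p$-torsor of the residue disk of $\eta_{0,1}$ cut out by $\sqrt[p]{1+(T-1)}$, whose $\Z/p\Z$-cochain on $\CC$ is trivial — and it is exactly such a torsor that \emph{does} branch on the skeleton. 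Proposition~\ref{décomposition du torseur sauvage} shows that $x$ has a single preimage $y$ while points $\eta_{1,r}$ with $r<p^{-p/(p-1)}$ have $p$ preimages; since preimages of $S^\an(\CC)$ under a finite morphism lie in $S^\an(Y)$, the point $y$ has at least $p+1\geq 3$ branches in $S^\an(Y)$ (one towards $\eta_{0,1}$, and $p$ towards the various $p$\textsuperscript{th} roots of unity) and is therefore a branching vertex, hence a node. Your alternative route via two rigid points and the threshold Lemma~\ref{lemme de résolubilité des points seuils} also does not directly give what is needed, since that lemma resolves the threshold point at distance $\tfrac{p}{p-1}$ \emph{off} the skeleton, not the point of the skeleton itself. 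If you want to repair your argument, the essential missing idea is to replace the torsor $\sqrt[p]{T}$ by a torsor with trivial cochain on $S^\an(\CC)$ (equivalently, to re-embed $\CC$ as above), since those are exactly the $\mu_p$-torsors that create branching on the skeleton rather than merely stretching it.
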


\begin{proof}
One can assume there exists a non-empty interval $I$ of $\R_{>0}$ such that $\CC$ is the analytic domain of $\P_k^{1,\an}$ defined by the condition $\vert T-1\vert \in I$, that we will denote $\CC_{(I)}$. Let $x\in \CC_{[2]}$, there exists $r\in I$ such that $x=\eta_{\,1,r}$. Up to replacing the interval $I$ by $J=r^{-1}p^{-\frac{p}{p-1}}I$ (in this case $\CC_{(I)}\simeq \CC_{(J)}$ because $r^{-1}p^{-\frac{p}{p-1}}\in \vert k^\times\vert$), one can assume $x=\eta_{1,p^{-\frac{p}{p-1}}}$. From proposition \ref{décomposition du torseur sauvage}, the point $x$ admits only one preimage $y$ by the étale covering $Y\to \CC$ induced by $\G_m^\an\xrightarrow{z\mapsto z^p} \G_m^\an$, and $y$ is a branching point of $Y$, i.e. a node of $S^\an(Y)$. Therefore $x$ is a branching point of $X$.
\end{proof}

\subsection{Anabelianity of the triviality of $\mu_p$-torsors}

We are now up to proving some tempered anabelianity of the triviality of cochains associated to $\mu_p$-torsors on a curve $X$ : either when $X$ is an annulus, or a $k$-analytically hyperbolic curve which is some open subset of the analytification of a Mumford $k$-curve.

\begin{thm}\label{théorème d'anabélianité de la trivialité des cochaînes}  Let $X$ be a $k$-analytic curve satisfying one of the two following conditions:
\begin{enumerate}
\item $X$ is an annulus 
\item$X$ is a $k$-analytically hyperbolic curve, of finite skeleton without bridge, without any point of genus $>0$, without boundary, with only annular cusps, and such that there is never strictly more than one cusp coming from each node.

\end{enumerate}

Then the set of $\mu_p$-torsors of $X$ of trivial $\Z/p\Z$-cochain, i.e. the set $H^1(X,\mu_p)\cap \ker(\theta)$, is completely determined by $\pi_1^\tp(X)$.

\end{thm}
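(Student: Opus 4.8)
The strategy is to reduce everything, in both cases, to the set-theoretic characterisation of $\ker(\theta)$ on an annulus given by Corollary~\ref{corollaire, détection de la nullité de la cochaîne sur une couronne}, and then to show that the ingredients entering that characterisation are visible from $\pi_1^\tp(X)$ by means of resolution of non-singularities. First I would record what data the tempered group provides: by the properties collected at the end of Section~\ref{partie propriétés des points résolubles}, from $\pi_1^\tp(X)$ one recovers the truncated skeleton, the vicinal subgroups $\pi_e$ (hence which compact subgroups cut out which vicinal edge), the set $X_{\res}$ of solvable points together with, for each vicinal edge $e$, the subset $\CC_e\cap X_{\res}$, and — crucially — for every $f\in H^1(X,\mu_p)$ viewed as a homomorphism $\pi_1^\tp(X)\to\Z/p\Z$ and every decomposition group $D_x$ of a solvable point $x$, whether $x\in\mathscr{D}(f)$. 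So the splitting sets $\mathscr{D}(f)_{\res}$, restricted further to any $\CC_e$, are group-theoretic.

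For case (1), $X=\CC$ an annulus: by Lemma~\ref{lemme de résolubilité du squelette d'une couronne} every type-$2$ point of $S^\an(\CC)$ is solvable, and more generally the threshold points $x_\alpha$ of Lemma~\ref{lemme de résolubilité des points seuils} are solvable (an annulus has no bridge). The key point is that Corollary~\ref{corollaire, détection de la nullité de la cochaîne sur une couronne} characterises $f\notin\ker(\theta)$ by the inclusion $\mathscr{D}(f)_{[2]}\subseteq\bigcap_{f'}\mathscr{D}(f')_{[2]}$; by density of type-$2$ points and the fact that the relevant type-$2$ points (namely those on segments $[\alpha,r(\alpha)]$ down to the threshold distance $\tfrac{p}{p-1}$) are solvable, this inclusion can be tested on $\mathscr{D}(f)_{\res}\cap\mathscr{D}(f')_{\res}$ rather than on all of $X_{[2]}$. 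Since these solvable splitting sets are read off from $\pi_1^\tp(\CC)$ by property (4), membership in $\ker(\theta)$ becomes group-theoretic. I would need to check carefully that solvable points suffice to detect the inequality $\varrho_f(\alpha)>\vert\alpha\vert p^{-p/(p-1)}$ versus equality — this follows because the dichotomy in Proposition~\ref{minimalité des rayons de déploiement} is witnessed already at (or just beyond) the threshold branching point $x_\alpha$, which is solvable.

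For case (2): here $\G=S^\an(X)$ is without bridge and there is at most one cusp per node, so (as in the proof of Proposition~\ref{proposition de détection du noyau pour les p cochaines}) a harmonic cochain is trivial iff it vanishes on every vicinal edge, and for each vicinal edge $e$ there exists an $f_e\in H^1(X,\mu_p)$ with $\theta(f_e)(e)\neq 0$ (because $e$ is not a bridge). Proposition~\ref{proposition de détection du noyau pour les p cochaines} then reduces $f\in\ker(\theta)$ to the edge-by-edge condition $(\mathscr{D}(f)_{[2]}\setminus\mathscr{D}(f_e)_{[2]})\cap\CC_{e,[2]}\neq\emptyset$. To make this group-theoretic I would replace each occurrence of $\mathscr{D}(\cdot)_{[2]}\cap\CC_e$ by $\mathscr{D}(\cdot)_{\res}\cap\CC_e$: the points needed are again the threshold points $x_\alpha$ inside $\CC_e$ for rigid $\alpha\in\CC_e\setminus S^\an(\CC_e)$, which are solvable by Lemma~\ref{lemme de résolubilité des points seuils} precisely because $e$ is not a bridge. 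Then property (2) identifies which decomposition groups lie in $\CC_e$ (via their image being open in $\pi_e$), property (4) decides total splitting, and quantifying over all $f_e$ and all vicinal $e$ gives a purely group-theoretic description of $H^1(X,\mu_p)\cap\ker(\theta)$.

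The main obstacle, in both cases, is the step of showing that \emph{solvable} type-$2$ points are already enough to detect the minimality/non-minimality of the splitting radius — i.e. that one may legitimately intersect the splitting sets in Corollary~\ref{corollaire, détection de la nullité de la cochaîne sur une couronne} and Proposition~\ref{proposition de détection du noyau pour les p cochaines} with $X_{\res}$ without losing information. This is where Lemma~\ref{lemme de résolubilité des points seuils} does the real work, and one must be careful that the branching point it produces genuinely separates the split locus from the non-split locus of the relevant $\mu_p$-torsor, so that comparing two torsors' solvable splitting sets faithfully compares their splitting radii at $\alpha$. Everything else is a matter of quoting the reconstruction properties of Section~\ref{partie propriétés des points résolubles} and assembling the logical equivalences.
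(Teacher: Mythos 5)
Your proposal is correct and takes essentially the same approach as the paper's proof: reduce to the set-theoretic characterisations (Corollary~\ref{corollaire, détection de la nullité de la cochaîne sur une couronne} for the annulus, Proposition~\ref{proposition de détection du noyau pour les p cochaines} for the hyperbolic case), observe that the witnessing type-$2$ points can always be chosen to be the threshold points $x_\alpha$, which are solvable by Lemma~\ref{lemme de résolubilité des points seuils}, and then invoke the anabelian properties of solvable points from Section~\ref{partie propriétés des points résolubles} to make the resulting condition purely group-theoretic. The paper writes out case (2) and says case (1) is handled identically via the corollary, so your parallel treatment of both cases matches it.
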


\begin{proof}
Let's concentrate on the second case, when the curve is $k$-analytically hyperbolic. The case of an annulus is treated exactly the same, inspiring from corollary \ref{corollaire, détection de la nullité de la cochaîne sur une couronne}, rather than proposition \ref{proposition de détection du noyau pour les p cochaines}.\

From \ref{proposition de détection du noyau pour les p cochaines}, an element $f\in H^1(X,\mu_p)$ belongs to $\ker(\theta)$ if and only if, for any vicinal edge $e$ of $S^\an(X)$ of associated annulus $\mathcal{C}_e$, there exists $f_e\in  H^1(X, \mu_{p})$ such that :  $$\left(\mathscr{D}(f)_{[2]}  \setminus \mathscr{D}(f_e)_{[2]}    \right)\cap \mathcal{C}_{e\,[2]}\neq \emptyset,$$ and one can always choose $f_e$ such that $\theta(f_e)(e)\neq 0$. In this case, as soon as $\alpha$ is a rigid point of $\CC_e$, the "threshold" point $x_\alpha\in ]\alpha,r(\alpha)[$ situated at a distance $\frac{p}{p-1}$ of $r(\alpha)$ is split by $f$ but not by $f_e$ : this comes from proposition \ref{minimalité des rayons de déploiement}. Therefore $x_\alpha\in \mathscr{D}(f)\setminus \mathscr{D}(f_e)$, and as such points are solvable by lemma \ref{lemme de résolubilité des points seuils}, we obtain : $$x_\alpha\in \mathscr{D}(f)_{\res}\setminus \mathscr{D}(f_e)_{\res}.$$

Since $X_{\res}\subseteq X_{[2]}$, we have $f\in \ker(\theta)$ if and only if there exists $f_e\in H^1(X,\mu_p)$ such that
\begin{equation}\label{équation de déploiement}
\left(\mathscr{D}(f)_{\res}  \setminus \mathscr{D}(f_e)_{\res}    \right)\cap \mathcal{C}_e\neq \emptyset.
\end{equation}
From the properties of solvable points presented in \ref{partie propriétés des points résolubles}, the sets $\mathscr{D}(f)_{\res}$, $\mathscr{D}(f_e)_{\res}$ and $\CC_e\cap X_{\res}$ are determined by the tempered group $\pi_1^\tp(X)$, so the condition (\ref{équation de déploiement}) above can be detected from the tempered group, hence the result. 
\end{proof}

\begin{rem}
The second condition on the curve $X$ amounts to asking that $X$ is $k$-analytically hyperbolic curve isomorphic to a open subset of the analytification of Mumford $k$-curve, such that $S^\an(X)$ is without bridge and there is never strictly more than one cusp coming from each node. In this case, from Theorem $3.63$ of \cite{Gau}, $X$ is $k$-analytically anabelian, not only hyperbolic.

\end{rem}

\begin{coro}
Let's stay in the framework of theorem \ref{théorème d'anabélianité de la trivialité des cochaînes}. Let $h\in \N^\times$, and $\mathrm{mod}(p) : \mathrm{Harm}(\G,\Z/p^h\Z)\to \mathrm{Harm}(\G,\Z/p\Z)$ the reduction modulo $p$ of the $\Z/p^h\Z$-cochains. Then it is possible to characterize from the tempered group $\pi_1^\tp(X)$ the kernel of the composed morphism $$\mathrm{mod}(p)\circ \theta : H^1(X,\mu_{p^h}) \to \mathrm{Harm}(\G,\Z/p\Z).$$ 
\end{coro}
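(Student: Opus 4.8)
The plan is to reduce the statement to the previously-established anabelianity result on $\mu_p$-torsors (Theorem \ref{théorème d'anabélianité de la trivialité des cochaînes}) by producing, in a group-theoretic way, a $\mu_p$-torsor whose $\Z/p\Z$-cochain equals $\mathrm{mod}(p)(\theta(f))$ for a given $f \in H^1(X,\mu_{p^h})$. The key observation is that there is a natural reduction morphism $H^1(X,\mu_{p^h}) \to H^1(X,\mu_p)$ coming from the inclusion $\mu_p \hookrightarrow \mu_{p^h}$ (equivalently, the surjection $\mu_{p^h} \twoheadrightarrow \mu_p$ dual to $\Z/p^h\Z \twoheadrightarrow \Z/p\Z$), and that this morphism is compatible with the cochain morphisms $\theta$ on each side, in the sense that the square relating $\theta_{p^h}$, $\theta_p$, the reduction $H^1(X,\mu_{p^h}) \to H^1(X,\mu_p)$ and $\mathrm{mod}(p) : \mathrm{Harm}(\G,\Z/p^h\Z) \to \mathrm{Harm}(\G,\Z/p\Z)$ commutes. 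This compatibility is essentially formal: on a single edge, $\theta$ records the degree of the strictly dominant monomial of a defining invertible function, and reducing the torsor modulo $p$ reduces that integer modulo $p$.

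First I would make precise the map $\rho : H^1(X,\mu_{p^h}) \to H^1(X,\mu_p)$. Viewing classes in $H^1$ as homomorphisms $\pi_1^\tp(X) \to \Z/p^h\Z$ (resp.\ $\to \Z/p\Z$) — using that $\mu_{p^h}$ is a constant sheaf after the choice of a primitive root of unity, which $k$ contains — the map $\rho$ is just postcomposition with $\Z/p^h\Z \twoheadrightarrow \Z/p\Z$. This is manifestly definable from $\pi_1^\tp(X)$ alone. Next I would verify the commutativity $\mathrm{mod}(p) \circ \theta_{p^h} = \theta_p \circ \rho$: it suffices to check this edge by edge on the truncated skeleton, where by the construction in Lemma \ref{existence du morphisme de cochaîne} one restricts to an annulus $r^{-1}(e)$, writes the torsor via an invertible function $z \mapsto z^m g(z)$ with $g$ of constant norm, and observes that the $\mu_p$-torsor $\rho$ of this class is defined by the same function, now taken modulo $p$-th powers, whose strictly dominant monomial still has degree $m$, so $\theta_p(\rho(f))(e) = m \bmod p = \mathrm{mod}(p)(\theta_{p^h}(f)(e))$. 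Once this is in hand, $\ker(\mathrm{mod}(p) \circ \theta_{p^h}) = \rho^{-1}\big(\ker(\theta_p)\big) = \rho^{-1}\big(H^1(X,\mu_p) \cap \ker(\theta)\big)$.

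To conclude: by Theorem \ref{théorème d'anabélianité de la trivialité des cochaînes} the subset $H^1(X,\mu_p) \cap \ker(\theta) \subseteq H^1(X,\mu_p)$ is determined by $\pi_1^\tp(X)$; the map $\rho$ is determined by $\pi_1^\tp(X)$; hence so is its preimage $\rho^{-1}\big(H^1(X,\mu_p)\cap\ker(\theta)\big)$, which we have just identified with $\ker(\mathrm{mod}(p)\circ\theta)$. One should also note that $H^1(X,\mu_{p^h})$ itself is group-theoretic, being $\Hom(\pi_1^\tp(X),\Z/p^h\Z)$, so the statement "this kernel is characterized from $\pi_1^\tp(X)$" is meaningful.

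The main obstacle I expect is the compatibility square $\mathrm{mod}(p) \circ \theta_{p^h} = \theta_p \circ \rho$, and more precisely being careful about two subtleties: first, the identification of $H^1(X,\mu_n)$ with homomorphisms to $\Z/n\Z$ requires fixing compatible primitive roots of unity (a primitive $p^h$-th root mapping to a primitive $p$-th root under the $p^{h-1}$-power map), and one must ensure the $\theta$'s are normalized with the same orientation conventions so the square really commutes on the nose rather than up to sign; second, one must check that reducing a $\mu_{p^h}$-torsor given locally by $\mathscr{O}(\mathscr{U})[T]/(T^{p^h}-f)$ to the associated $\mu_p$-torsor indeed corresponds to the class of the \emph{same} function $f$ modulo $p$-th powers (and not, say, $f^{p^{h-1}}$) — this depends on whether $\rho$ is induced by $\mu_p \hookrightarrow \mu_{p^h}$ or by $\mu_{p^h} \twoheadrightarrow \mu_p$, and the statement of the corollary (reduction of cochains, i.e.\ the surjection $\Z/p^h\Z \to \Z/p\Z$ on coefficients) forces the dual choice $\mu_p \hookrightarrow \mu_{p^h}$ on sheaves, under which the defining function is unchanged. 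Handling these bookkeeping points correctly is the only real content; the rest is a formal diagram chase combined with the quoted theorem.
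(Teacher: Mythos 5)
Your approach is essentially the same as the paper's: build the commutative square relating $\theta_{p^h}$, $\theta_p$, a reduction map $\rho : H^1(X,\mu_{p^h}) \to H^1(X,\mu_p)$ and $\mathrm{mod}(p)$, identify $\ker(\mathrm{mod}(p)\circ\theta)$ with $\rho^{-1}(\ker\theta_p)$, and invoke Theorem~\ref{théorème d'anabélianité de la trivialité des cochaînes} together with the group-theoretic nature of $\rho$. The paper writes $\rho = \pi_*$ where $\pi$ appears in the short exact sequence $1\to\mu_{p^{h-1}}\to\mu_{p^h}\xrightarrow{\pi}\mu_p\to 1$, and records the same identification $H^1(X,\mu_{p^i})\simeq\Hom(\pi_1^\tp(X),\mu_{p^i})$ that you use.

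There is, however, one concrete error in your final bookkeeping paragraph. You state that the inclusion $\mu_p\hookrightarrow\mu_{p^h}$ and the surjection $\mu_{p^h}\twoheadrightarrow\mu_p$ give ``equivalent'' maps, and you conclude that $\rho$ must be induced by the \emph{inclusion} $\mu_p\hookrightarrow\mu_{p^h}$. This is backwards: a morphism of abelian sheaves induces a covariant map on $H^1$, so the inclusion $\mu_p\hookrightarrow\mu_{p^h}$ yields a map $H^1(X,\mu_p)\to H^1(X,\mu_{p^h})$, in the wrong direction; it cannot be $\rho$. The correct sheaf map is the \emph{surjection} $\pi:\mu_{p^h}\twoheadrightarrow\mu_p$, $z\mapsto z^{p^{h-1}}$, exactly as in the paper. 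Under compatible choices $\zeta_p=\zeta_{p^h}^{p^{h-1}}$, this $\pi$ does correspond to reduction $\Z/p^h\Z\twoheadrightarrow\Z/p\Z$ on coefficients, and it does preserve the Kummer defining function: the map of Kummer sequences with identity on the middle $\G_m$ shows that $\pi_*$ sends the Kummer class of $f$ in $H^1(X,\mu_{p^h})$ to the Kummer class of the \emph{same} $f$ in $H^1(X,\mu_p)$ (concretely, quotienting the torsor $T^{p^h}=f$ by $\ker\pi=\mu_{p^{h-1}}$ gives $S^p=f$ with $S=T^{p^{h-1}}$). So your commutativity computation had the right content — you wanted $\rho$ to keep the defining function fixed, and that is what the surjection does — but you attached the wrong name to the map that accomplishes it.
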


\begin{proof}
We have a commutative diagram : 
$$\xymatrix{H^1(X,\mu_{p^h}) \ar[r]^\theta \ar[d]& \mathrm{Harm}(\G,\Z/p^h\Z) \ar[d]^{\mathrm{mod}(p)}\\ H^1(X,\mu_{p}) \ar[r]^\theta &  \mathrm{Harm}(\G,\Z/p\Z) }$$
where the first vertical arrow is induced by the exact sequence :
$\ds{1\to \mu_{p^{h-1}} \to \mu_{p^h} \xrightarrow{\pi} \mu_p \to 1}$. With the identification $H^1(X,\mu_{p^i})\simeq \Hom(\pi_1^\tp(X),\mu_{p^i})$, this morphism is nothing else than $$\Hom(\pi_1^\tp(X),\mu_{p^h}) \xrightarrow{\pi_*} \Hom(\pi_1^\tp(X),\mu_{p}),$$ so only depends on the tempered group $\pi_1^\tp(X)$. The conclusion follows from \ref{théorème d'anabélianité de la trivialité des cochaînes} and the commutativity of the diagram. 
\end{proof}

\section{Partial anabelianity of lengths of annuli}\label{section anabélianité partielle de la longueur des couronnes}

We are going to show how all these set-theoretical considerations about the intersection of the skeleton of an annulus with the splitting sets of its $\mu_p$-torsors enable to extract some information about the length of the annulus, before giving an anabelian interpretation.

\subsection{Lengths and splitting sets}

The following lemma enables to determine whether the length of an annulus is $>\frac{2p}{p-1}$ from the knowledge of its $\mu_p$-torsors of trivial cochain.

\begin{lem}\label{lemme pour montrer qu'une couronne est de longueur supérieure à la borne}
A $k$-analytic annulus $\mathcal{C}$ has a length strictly greater than $\frac{2p}{p-1}$ if and only if any $\mu_p$-torsor of trivial cochain on $\CC$ totally splits over a non-empty portion of its analytic skeleton :
$$\ell(\mathcal{C})>\frac{2p}{p-1}\iff \forall f\in H^1(\mathcal{C},\mu_p)\cap\ker(\theta), \mathscr{D}(f)_{[2]}\cap S^\an(\mathcal{C})\neq \emptyset.$$
\end{lem}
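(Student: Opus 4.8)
The plan is to work with an explicit model $\CC=\CC_I$, where $I$ is an interval of $\R_{>0}$ with endpoints $a<b$ (the case $\ell(\CC)=+\infty$, i.e. $a=0$ or $b=+\infty$, is immediate and treated apart), so that $\ell(\CC)=\log_p(b/a)$ and $S^\an(\CC)=\{\eta_{0,\rho}:\rho\in I\}$, its type-$2$ points being those with $\rho\in|k^\times|$, a dense subset. Given $f\in H^1(\CC,\mu_p)\cap\ker(\theta)$, I would use the Kummer sequence together with vanishing of the Picard group of a compact annulus to represent $f$, after harmlessly shrinking $\CC$ a little and dividing out the $p$-th power $T^{mp}$, in the form $f=\sqrt[p]{g}$ with $g=1+u$, $u=\sum_{k\neq0}a_kT^k$; by Proposition \ref{conditions d'inversibilité} the hypothesis $\theta(f)=0$ is exactly the statement that $1$ is the strictly dominant monomial of $g$, i.e. $|a_k|\,r^k<1$ for every $r\in I$ and every $k\neq0$. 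I then set $M(\rho):=|u(\eta_{0,\rho})|=\max_{k\neq0}|a_k|\rho^k$, a continuous function of $\rho$, strictly convex in $\log\rho$ and everywhere $<1$ on $I$.

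The link with splitting is that, composing with $1+u:\CC\to\{|t|<1\}\subset\G_m^\an$ and invoking Proposition \ref{décomposition du torseur sauvage} exactly as in the proof of Proposition \ref{minimalité des rayons de déploiement}, the torsor $f$ totally splits over $\eta_{0,\rho}$ precisely when $g$ is a $p$-th power in $\HC(\eta_{0,\rho})$; this certainly holds when $M(\rho)<p^{-\frac{p}{p-1}}$ (radius of convergence of $\sqrt[p]{1+w}$, since $\mu_p\subset k\subset\HC(\eta_{0,\rho})$), and fails once $M(\rho)$ is larger. So, using density of the type-$2$ locus, the condition $\mathscr{D}(f)_{[2]}\cap S^\an(\CC)\neq\emptyset$ reduces to $\inf_{\rho\in I}M(\rho)<p^{-\frac{p}{p-1}}$.

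For the implication $\ell(\CC)>\frac{2p}{p-1}\Rightarrow$ (RHS), I would evaluate $M$ at the geometric mean $\rho_0=\sqrt{ab}\in I$: the constraints $|a_k|b^k<1$ for $k\geq1$ and $|a_k|a^k<1$ for $k\leq-1$ give termwise $|a_k|\rho_0^{\,k}<(a/b)^{|k|/2}\leq(a/b)^{1/2}=p^{-\ell(\CC)/2}<p^{-\frac{p}{p-1}}$, hence $M(\rho_0)<p^{-\frac{p}{p-1}}$, and by continuity the same strict inequality holds at a nearby type-$2$ point of the skeleton, which therefore lies in $\mathscr{D}(f)$. For the converse I argue contrapositively: if $\ell(\CC)\leq\frac{2p}{p-1}$, I choose $c,c'\in k^\times$ (using density of $|k^\times|$) with $|c|<1/b$, $|c'|<a$ and $|cc'|$ pushed as high as these inequalities force it to be; then $g=1+cT+c'T^{-1}$ is a global function with strictly dominant monomial $1$, so $f=\sqrt[p]{g}$ is a non-trivial element of $\ker(\theta)$, while $M(\rho)=\max(|c|\rho,|c'|/\rho)\geq\sqrt{|cc'|}$ on all of $I$ with the minimum attained at $\rho^\ast=\sqrt{|c'/c|}=\sqrt{ab}\in I$ and not below $p^{-\frac{p}{p-1}}$, so $\mathscr{D}(f)\cap S^\an(\CC)=\emptyset$.

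The step I expect to be the main obstacle — and the one that pins down the constant $\frac{2p}{p-1}=2\cdot\frac{p}{p-1}$ rather than merely an estimate — is the borderline regime $M(\rho)=p^{-\frac{p}{p-1}}$ on a type-$2$ point: there "totally splits over $\eta_{0,\rho}$" is no longer read off convergence of a binomial series but becomes the question whether $1+u$ is a $p$-th power in $\HC(\eta_{0,\rho})$, whose residue field $\widetilde{k}(T)$ is not algebraically closed, so a genuine (Artin–Schreier type) obstruction must be exhibited to conclude the converse direction is tight. The remaining soft inputs — vanishing of $\mathrm{Pic}$ of a compact annulus (so that $H^1(-,\mu_p)=\OO^\times/(\OO^\times)^p$ there), density of $|k^\times|$ in $\R_{>0}$, and the normalization $f\simeq\sqrt[p]{1+u}$ — are routine.
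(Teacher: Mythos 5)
Your forward direction is correct and is a mild variant of the paper's: the paper exhibits an open interval $]r_1,r_2[$ of the skeleton on which the cochain-trivial torsor splits, whereas you evaluate at the geometric mean $\rho_0=\sqrt{ab}$; both give $M(\rho_0)<p^{-\frac{p}{p-1}}$ from the constraint $|a_k|\,r^k<1$ for all $r\in I$, then invoke convergence of $\sqrt[p]{1+w}$ and density of type-$2$ points.

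The backward direction is where the gaps sit, and they are not only the one you flag. First, the reduction ``$\mathscr{D}(f)_{[2]}\cap S^\an(\CC)\neq\emptyset$ iff $\inf_\rho M(\rho)<p^{-\frac{p}{p-1}}$'' is false as stated: $M$ depends on the chosen Kummer representative $g$, and for the trivial torsor one can take $g=(1+T)^p$, whose $u$ has strictly dominant constant term and $M(\rho)=\max(p^{-1}\rho,\ldots,\rho^p)$ easily exceeding $p^{-\frac{p}{p-1}}$ on a short annulus near $|T|=1$, yet the torsor splits everywhere. Only the implication ``$M(\rho)<p^{-\frac{p}{p-1}}\Rightarrow$ split'' is representative-independent; the converse has to be established for the specific $g$ you construct. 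Second, with the strict constraints $|c|<1/b$, $|c'|<a$ you get $\sqrt{|cc'|}<\sqrt{a/b}=p^{-\ell/2}$ \emph{strictly}, so when $\ell(\CC)=\frac{2p}{p-1}$ your $\min M$ is $<p^{-\frac{p}{p-1}}$ and the torsor actually \emph{splits} at $\rho^\ast$. To hit the threshold you need the non-strict choices $|c|=1/b$, $|c'|=a$, which is only consistent with invertibility of $g$ on an open annulus (the paper silently works with $\CC=\CC_{]1-\varepsilon,1[}$ and $c=1$, $|c'|=1-\varepsilon$, which achieves exactly this). Also $\rho^\ast=\sqrt{|c'/c|}$ is only $\sqrt{ab}$ under these non-strict choices. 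Third — the point you do flag — even granting $M(\rho)\geq p^{-\frac{p}{p-1}}$ for all $\rho\in I$, non-splitting of the specific $g=1+cT+c'T^{-1}$ at a type-$2$ point must be verified: writing a putative $p$-th root as $1+w$ with $|w|<1$, the cases $|w|>p^{-\frac{1}{p-1}}$ and $|w|=p^{-\frac{1}{p-1}}$ reduce, respectively, to ``$\lambda t$ is a $p$-th power in $\widetilde{k}(t)$'' and to an Artin--Schreier-type equation $a_0\xi+b_0\xi^p=\lambda t$ in $\widetilde{k}(t)$, both of which fail by a degree argument. The paper compresses all of this into ``one can check''; your proposal identifies the right obstruction in the residue field but stops short of running it, and moreover rests on a general equivalence that, as phrased, is incorrect.
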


\begin{proof}
Assume $\ell(\mathcal{C})>\frac{2p}{p-1}$, and consider $f\in H^1(\mathcal{C},\mu_p)\cap \ker(\theta)$. As in the proof of proposition \ref{minimalité des rayons de déploiement}, up to restricting $\mathcal{C}$ (but slightly, in order to keep the condition on the length), one can assume that $\mathcal{C}$ is the subannulus of $\P_k^{1,\an}$ given by the condition $T\in ]1-\varepsilon,1[$ with $1-\epsilon<p^{-\frac{2p}{p-1}}$, and that the $\mu_p$-torsor $f$ is defined by a function $g\in \mathscr{O}_\CC(\mathcal{C})^\times$ written : $$g(T)=1+\underbrace{\sum_{k\in \Z\setminus \lbrace 0\rbrace}a_kT^k}_{u(T)},$$
with for all $k\in \N^\times$ : $\vert a_k \vert < 1$ and $\vert a_{-k}\vert<(1-\varepsilon)^k$. The skeleton of $\mathcal{C}$ is the interval $]\eta_{0,1-\varepsilon},\eta_{0,1}[$, and the corresponding analytic torsor totally splits over a point $\eta_{0, r}\in S^\an(\mathcal{C})$ as soon as $\ds{\vert u(\eta_{0, r}) \vert<p^{-\frac{p}{p-1}}}$. Let $k\in \N^\times$ :
\begin{itemize}
\item[•] if $r<p^{-\frac{p}{p-1}}$, $\vert a_k r^k\vert<p^{-\frac{kp}{p-1}}\leqslant p^{-\frac{p}{p-1}}$
\bigskip
\item[•] if $r>(1-\varepsilon)\,p^{\frac{p}{p-1}}$, $\vert a_{-k}r^{-k}\vert< (1-\varepsilon)^k(1-\varepsilon)^{-k}p^{-\frac{kp}{p-1}}=p^{-\frac{kp}{p-1}}<p^{-\frac{p}{p-1}} $
\end{itemize}\

But we have $1-\varepsilon<p^{-\frac{2p}{p-1}}$ (from the assumption on the length of $\mathcal{C}$), hence : $$r_1=(1-\varepsilon)\,p^{\frac{p}{p-1}}<p^{-\frac{p}{p-1}}=r_2.$$ Consequently, the torsor $f$ totally splits over the non-empty interval $]\eta_{0,r_1},\eta_{0,r_2}[$ of the skeleton. From the density of $S^\an(\mathcal{C})_{[2]}$ in $S^\an(\mathcal{C})$, we obtain that $\mathscr{D}(f)_{[2]}\cap S^\an(\mathcal{C})\neq \emptyset$.

\bigskip
Reciprocally, if $\ell(\mathcal{C})\leqslant \frac{2p}{p-1}$, one can check that the torsor given by the function ${g(T)=1+T+(1-\varepsilon)T^{-1}}$ never totally splits over any point of $S^\an(\mathcal{C})$.
\end{proof}

It is actually possible to reduce by half the previous bound from a finer condition requiring to look at the set of $\mu_p$-torsors which totally split \emph{over a neighbourhood of a fixed extremity}. We need  the following definition :
\begin{defi}
Let $\mathcal{C}$ a non-empty $k$-analytic annulus. Its skeleton $S^\an(\mathcal{C})$ is an interval (open or close), and let $\omega$ be one of its extremities. Let $H^1_\omega(\mathcal{C},\mu_p)$ be the subgroup of $H^1(\mathcal{C},\mu_p)$ of $\mu_p$-torsors which totally split \emph{over a neighbourhood of $\omega$}, i.e. which totally splits over a subinterval of $S^\an(\CC)$ of non-empty interior, and whose complementary in $S^\an(\mathcal{C})$ is an interval which does not admit $\omega$ as extremity.

\end{defi}

\begin{lem}\label{lemme pour la meilleure borne}
A $k$-analytic annulus $\mathcal{C}$ has a length strictly greater than $\frac{p}{p-1}$ if and only if, for any extremity $\omega$ of $S^\an(\mathcal{C})$ : 
$$\mathrm{Card} \left( \bigcap_{f\in H^1_\omega(\mathcal{C},\mu_p)} \mathscr{D}(f)_{[2]}\cap S^\an(\mathcal{C}) \right)\geqslant 2.  $$
\end{lem}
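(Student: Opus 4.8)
The plan is to mimic the proof of Lemma \ref{lemme pour montrer qu'une couronne est de longueur supérieure à la borne}, but this time tracking the splitting behaviour separately near each extremity rather than over the whole skeleton. As before, up to restricting $\CC$ slightly (keeping the length condition) and rescaling, I would realise $\CC$ as the subannulus of $\P_k^{1,\an}$ defined by $\vert T\vert\in\,]1-\varepsilon,1[$, whose skeleton is the interval $]\eta_{0,1-\varepsilon},\eta_{0,1}[$ with the two extremities $\omega=\eta_{0,1}$ and $\omega'=\eta_{0,1-\varepsilon}$. By Proposition \ref{propriétés kummériennes des revêtements de couronnes}, every $\mu_p$-torsor is Kummer, defined by a function $g(T)=a_{i_0}T^{i_0}(1+u(T))$; the class of $g$ in $H^1(\CC,\mu_p)$ only depends on $i_0 \bmod p$ and on $u$, and by Proposition \ref{minimalité des rayons de déploiement} / Proposition \ref{décomposition du torseur sauvage} the torsor splits over $\eta_{0,r}$ exactly when the non-dominant part has norm $<p^{-\frac{p}{p-1}}$ there (after the coordinate change that kills the dominant monomial if $i_0\not\equiv 0$, or directly if $i_0\equiv 0\bmod p$). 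The key computation is the same elementary estimate as in the previous lemma: a monomial $a_kT^k$ with $k>0$ has norm $<p^{-\frac{p}{p-1}}$ at $\eta_{0,r}$ once $r<p^{-\frac{p}{p-1}}$ (using $\vert a_k\vert<1$), and a monomial $a_{-k}T^{-k}$ has norm $<p^{-\frac{p}{p-1}}$ once $r>(1-\varepsilon)p^{\frac{p}{p-1}}$ (using $\vert a_{-k}\vert<(1-\varepsilon)^k$).

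For the forward direction, suppose $\ell(\CC)>\frac{p}{p-1}$, i.e. $1-\varepsilon<p^{-\frac{p}{p-1}}$, and fix the extremity $\omega=\eta_{0,1}$. If $f\in H^1_\omega(\CC,\mu_p)$, then $f$ splits over a subinterval adjacent to $\omega$, and I claim it must in fact split over the whole interval $]\eta_{0,r_2},\eta_{0,1}[$ with $r_2=(1-\varepsilon)p^{\frac{p}{p-1}}<1$: indeed, writing $f$ via a function $g$ that splits near $\omega$ forces the part of $g$ that could obstruct splitting near $\omega$ to consist only of monomials of degree $\le 0$ (any strictly positive-degree monomial would already obstruct splitting arbitrarily close to $\omega$ after normalising the dominant monomial — this is where one uses that splitting near the ``large radius'' end constrains the shape of $g$), and those monomials of degree $<0$ all become negligible for $r>r_2$ by the estimate above. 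Since $r_2<1$, the set $\mathscr{D}(f)_{[2]}\cap S^\an(\CC)$ contains the type-$2$ points of a non-empty interval, hence (by density of $S^\an(\CC)_{[2]}$) has at least two elements; and this interval $]\eta_{0,r_2},\eta_{0,1}[$ is the same for \emph{every} $f\in H^1_\omega(\CC,\mu_p)$ in the sense that all of them split there, so the intersection $\bigcap_{f\in H^1_\omega}\mathscr{D}(f)_{[2]}\cap S^\an(\CC)$ still contains $\ge 2$ points. The argument for the other extremity $\omega'$ is symmetric (swap the roles of positive and negative degrees).

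For the converse, suppose $\ell(\CC)\le\frac{p}{p-1}$, i.e. $1-\varepsilon\ge p^{-\frac{p}{p-1}}$, so that $r_2=(1-\varepsilon)p^{\frac{p}{p-1}}\ge 1$ and there is no room ``above $r_2$'' inside the skeleton. I would then exhibit, for the extremity $\omega=\eta_{0,1}$, two torsors in $H^1_\omega(\CC,\mu_p)$ whose splitting loci inside $S^\an(\CC)$ meet in at most one point — the natural candidates being the torsor attached to $g(T)=1+(1-\varepsilon)T^{-1}$ (which splits precisely on a neighbourhood of $\omega$ but whose splitting locus does not extend past a specific threshold) and a suitable translate/variant attached to $g(T)=1+\lambda(1-\varepsilon)T^{-1}$ or $g(T)=1+c\,T^{-1}$ with a different coefficient, chosen so that the two thresholds coincide at a single point of the skeleton. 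Concretely, for $g=1+(1-\varepsilon)T^{-1}$ the non-constant part has norm $(1-\varepsilon)r^{-1}$ at $\eta_{0,r}$, which is $<p^{-\frac{p}{p-1}}$ exactly for $r>r_2$, and when $\ell(\CC)\le\frac{p}{p-1}$ this is an interval with \emph{empty interior} inside $S^\an(\CC)$ or a single boundary situation — so $H^1_\omega$ is essentially trivial and the cardinality condition fails. The main obstacle I anticipate is being careful about the edge cases where $r_2$ falls exactly at an endpoint of $I$ (so ``$\ge 2$ points'' versus ``$\le 1$ point'' is genuinely borderline), and making precise the claim that membership of $f$ in $H^1_\omega(\CC,\mu_p)$ really does pin down the degrees of the relevant monomials of a defining function $g$ — this requires invoking the structure of $\mathscr{O}_\CC(\CC)^\times$ and the characterisation of splitting radius (Proposition \ref{minimalité des rayons de déploiement}) uniformly, not just at a single rigid point. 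Once that structural point is settled, the rest is the same two-monomial norm estimate already used above.
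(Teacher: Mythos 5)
Your overall strategy is genuinely different from the paper's. For the forward implication the paper extends any $f\in H^1_\omega(\CC,\mu_p)$ to the closed unit disk $\DD_0\supset\CC$, using that $f$ is trivial near $\omega$ and that $\mathrm{Pic}(\DD_0)$ vanishes; this forces a defining function with only \emph{non-negative} Taylor exponents, after which the two-monomial estimate gives a common splitting interval. You instead work with a Laurent series directly on the annulus and try to argue that membership in $H^1_\omega$ constrains the coefficients. That can be made to work, but as written the key claim is not correct: it is not true that ``any strictly positive-degree monomial would already obstruct splitting arbitrarily close to $\omega$.'' A monomial $a_kT^k$ with $k>0$ and $\lvert a_k\rvert<p^{-\frac{p}{p-1}}$ obstructs nothing anywhere on $\{r\le 1\}$. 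What is true, and what you need, is that $f\in H^1_\omega$ forces $\lvert a_k\rvert< p^{-\frac{p}{p-1}}$ for every $k>0$ (by letting $r\to 1$), so these terms can be discarded and the remaining obstruction comes from $k<0$ alone, whence the estimate $r>r_2=(1-\varepsilon)p^{\frac{p}{p-1}}$. This part is repairable but currently stated wrongly.

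The reverse implication is where you have a genuine gap. Your candidate $g(T)=1+(1-\varepsilon)T^{-1}$ does \emph{not} belong to $H^1_\omega(\CC,\mu_p)$ when $\ell(\CC)\le\frac{p}{p-1}$: its splitting threshold $r_2=(1-\varepsilon)p^{\frac{p}{p-1}}$ is $\ge 1$, so it splits over \emph{no} point of the skeleton, and by definition of $H^1_\omega$ it is excluded. Worse, your concluding step ``so $H^1_\omega$ is essentially trivial and the cardinality condition fails'' is logically backwards: if $H^1_\omega(\CC,\mu_p)$ were reduced to the trivial class then $\bigcap_{f\in H^1_\omega}\mathscr{D}(f)_{[2]}\cap S^\an(\CC)=S^\an(\CC)_{[2]}$, which has infinitely many elements, so the cardinality condition would \emph{hold}, not fail. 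What you actually need, and what makes the idea work, is the opposite: when $\ell(\CC)\le\frac{p}{p-1}$, the group $H^1_\omega(\CC,\mu_p)$ is large — it contains the torsors $1+cT^{-1}$ for every $c\in k$ with $(1-\varepsilon)p^{-\frac{p}{p-1}}<\lvert c\rvert<p^{-\frac{p}{p-1}}$ (and the length assumption is exactly what guarantees $\lvert c\rvert<1-\varepsilon$ so these are analytic on $\CC$), and their splitting thresholds $\lvert c\rvert\,p^{\frac{p}{p-1}}$ are dense in $\,]1-\varepsilon,1[\,$ by divisibility of $\lvert k^\times\rvert$. The intersection of the corresponding splitting loci then has at most one point. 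The paper's own reverse argument is different again: it assumes two points $x_1,x_2$ lie in the intersection, chooses a type-$2$ point $y$ between them, restricts to the component of $S^\an(\CC)\setminus\{y\}$ not abutting to $\omega$, builds on the resulting short annulus a $\mu_p$-torsor that fails to split at $x_2$, and extends it to an element of $H^1_\omega(\CC,\mu_p)$, contradicting $x_2$ being in the intersection. Either route works, but yours as written does not close; you must fix both the choice of candidate torsors and the inverted final inference.
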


\begin{proof}
Assume $\ell(\mathcal{C})>\frac{p}{p-1}$, and consider $f\in H^1_\omega(\mathcal{C},\mu_p)$. Up to restriction of $\mathcal{C}$ (but slightly, such that the condition on the length still holds), one can assume $\mathcal{C}$ is the subannulus of $\P_k^{1,\an}$ given by the condition $T\in ]1-\varepsilon,1]$ with $1-\varepsilon<p^{-\frac{p}{p-1}}$. Let $\mathcal{D}_0$ be the closed $k$-analytic disk of $\P_k^{1,\an}$ centred in $0$ and of radius $1$, i.e. defined by the condition $\vert T\vert \in [0,1]$. The annulus $\mathcal{C}$ is then a $k$-analytic subspace of $\mathcal{D}_0$. From the assumption on $f$ it is possible to extend $f$ into a torsor $\widetilde{f}\in H^1(\mathcal{D}_0, \mu_p)$ of $\mathcal{D}_0$ trivial over $\mathcal{D}_0\setminus \mathcal{C}$. Since $\mathrm{Pic}(\mathcal{D}_0)$ is trivial ($\mathcal{D}_0$ is a $k$-affinoid space), $\widetilde{f}$ is given by a function $g\in \mathscr{O}_{\mathcal{D}_0}(\mathcal{D}_0)^\times$ written $$g(T)=1+\underbrace{\sum_{k\in \N^\times}a_kT^k}_{v(T)},$$
with $\vert a_k \vert < 1$ for all $k\in \N^\times$.\

The skeleton of $\mathcal{C}$ is the interval $]\eta_{0,1-\varepsilon},\eta_{0,1}]$, and the torsor $f=\widetilde{f}_{\vert\, \mathcal{C}}$ totally splits over the point $\eta_{0, r}\in S^\an(\mathcal{C})$ as soon as $\ds{\vert v(\eta_{0, r}) \vert<p^{-\frac{p}{p-1}}}$.\
 
 For all $k\in \N^\times$ and $r\in ]1-\varepsilon, p^{-\frac{p}{p-1}}[$, we have $\vert a_k r^k \vert <p^{-\frac{p}{p-1}}$, so ${\vert v(\eta_{0, r}) \vert<p^{-\frac{p}{p-1}}}$. Thus, the interval $]\eta_{0, 1-\varepsilon}, \eta_{0, p^{-\frac{p}{p-1}}}[$ belongs to $\mathscr{D}(f)$. As the reasoning is independent of the choice of  $f\in H^1_\omega(\mathcal{C},\mu_p)$, one obtain : $$]\eta_{0, 1-\varepsilon}, \eta_{0, p^{-\frac{p}{p-1}}}[ \subseteq \bigcap_{f\in H^1_\omega(\mathcal{C},\mu_p)} \mathscr{D}(f)\cap S^\an(\mathcal{C}),$$ and the conclusion follows from density of type-$2$ points in $\CC$.
 
 \bigskip
Reciprocaly, consider an annulus $\mathcal{C}$ of length $\ell(\mathcal{C})\leqslant \frac{p}{p-1}$, and assume there exist two distinct points $\ds{x_1, x_2 \in\bigcap_{f\in H^1_\omega(\mathcal{C},\mu_p)} \mathscr{D}(f)_{[2]}\cap S^\an(\mathcal{C})}$. Let $y\in ]x_1,x_2[$ a type-$2$ point. Let $I$ be the connected component of $S^\an(C)\setminus \lbrace y \rbrace$ which does not abut to $\omega$, and $\mathcal{C}_I$ the subannulus of $\mathcal{C}$ of skeleton $I$. Up to exanging $x_1$ and $x_2$, one can assume $x_2\in I$. As the annulus $\mathcal{C}_I$ has a length $<\frac{p}{p-1}$, there exists $h\in H^1(\mathcal{C}_I,\mu_p)$ such that $\mathscr{D}(h)\cap S^\an(\mathcal{C}_I)=]y, x_2[$. Therefore, $h$ can be extended into a torsor $\widetilde{h}\in H^1(\mathcal{C}, \mu_p)$ of $\mathcal{C}$, such that $\mathscr{D}(\widetilde{h})\cap S^\an(\mathcal{C})= ]x_1, y[$ (or $[x_1, y[$, according to whether $\mathcal{C}$ is open or closed in $x_1$). Then $\widetilde{h}\in H^1_\omega(\mathcal{C},\mu_p)$, which leads to a contradiction since $x_2\notin \mathscr{D}(\widetilde{h})$.
 
\end{proof}

\begin{coro}\label{corollaire permettant de caractériser si une couronne est plus grande que le seuil}
It is possible to determine from the tempered fundamental group of a $k$-analytic annulus whether the length of the latter is strictly greater than $\frac{p}{p-1}$. 
\end{coro}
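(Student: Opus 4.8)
The plan is to turn the geometric criterion of Lemma~\ref{lemme pour la meilleure borne} into a group-theoretic one. By that lemma, $\ell(\CC)>\frac{p}{p-1}$ if and only if, for each extremity $\omega$ of $S^\an(\CC)$, the set $\bigcap_{f\in H^1_\omega(\CC,\mu_p)}\mathscr{D}(f)_{[2]}\cap S^\an(\CC)$ contains at least two points. So it suffices to show that this set-theoretic condition can be tested from $\pi_1^\tp(\CC)$, using the anabelian behaviour of solvable points developed in Section~\ref{section résolution des non-singularités}, applied to the annulus exactly as in the proof of Theorem~\ref{théorème d'anabélianité de la trivialité des cochaînes} (the annulus is not $k$-analytically hyperbolic, but the relevant facts about decomposition groups rely only on the structure of its finite étale coverings and so go through).

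First I would recover the relevant data. The group $H^1(\CC,\mu_p)$ is recovered through the identification $H^1(\CC,\mu_p)\simeq\Hom(\pi_1^\tp(\CC),\mu_p)$. By Lemma~\ref{lemme de résolubilité du squelette d'une couronne} the type-$2$ points of $S^\an(\CC)$ are exactly its solvable points; their decomposition groups in $\pi_1^\tp(\CC)$ are singled out as the appropriate maximal compact subgroups (as in item~$(1)$ of~\ref{partie propriétés des points résolubles}), and one recognises among all solvable points of $\CC$ those lying on the skeleton. For $f\in H^1(\CC,\mu_p)$ regarded as a morphism to $\mu_p$ and a solvable point $x$ with decomposition group $D_x$, whether $f$ splits at $x$ is read off from $\pi_1^\tp(\CC)$, $f$ and $D_x$ (item~$(4)$ of~\ref{partie propriétés des points résolubles}). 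Hence each subset $\mathscr{D}(f)_{[2]}\cap S^\an(\CC)=\mathscr{D}(f)_{\res}\cap S^\an(\CC)_{\res}$ of the skeleton is reconstructed from $\pi_1^\tp(\CC)$; moreover the incidence pattern of the decomposition groups equips $S^\an(\CC)_{[2]}$ with its linear order, hence with its two ends (equivalently, this order is visible on any prime-to-$p$ Kummer covering of $\CC$, whose skeleton maps homeomorphically onto $S^\an(\CC)$).

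Next I would reconstruct the subgroups $H^1_\omega(\CC,\mu_p)$. As in the computations in the proofs of Lemmas~\ref{lemme pour montrer qu'une couronne est de longueur supérieure à la borne} and~\ref{lemme pour la meilleure borne}, the splitting locus of a $\mu_p$-torsor meets the skeleton of an annulus in a subinterval (possibly empty or the whole skeleton), being cut out in suitable coordinates by conditions of the form $|a_k|r^k<c$, i.e.\ by an intersection of down-sets and up-sets for $r$. Therefore $H^1_\omega(\CC,\mu_p)$ consists exactly of those $f$ for which $\mathscr{D}(f)_{[2]}\cap S^\an(\CC)$ is either all of $S^\an(\CC)$ or a proper subinterval with non-empty interior whose complement in the skeleton is an interval not abutting to $\omega$. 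Since the ordered set $S^\an(\CC)_{[2]}$, its two ends, and the sets $\mathscr{D}(f)_{[2]}\cap S^\an(\CC)$ have all been recovered, each $H^1_{\omega_i}(\CC,\mu_p)$ is determined by $\pi_1^\tp(\CC)$. One then forms the intersections $\bigcap_{f\in H^1_{\omega_i}}\mathscr{D}(f)_{[2]}\cap S^\an(\CC)$, counts their type-$2$ points, and concludes by Lemma~\ref{lemme pour la meilleure borne}.

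The delicate step is the reconstruction of the oriented structure of the skeleton: one must ensure that ``$\mathscr{D}(f)$ reaches a prescribed end of $S^\an(\CC)$'' is genuinely group-theoretic, i.e.\ that $\pi_1^\tp(\CC)$ recovers the linear order of $S^\an(\CC)_{[2]}$ together with the distinction of its two ends, and not only up to the end-exchanging flip. This is harmless, however, because the criterion of Lemma~\ref{lemme pour la meilleure borne} is symmetric in the two extremities: it is enough to recover the \emph{unordered} pair $\{H^1_{\omega_1}(\CC,\mu_p),H^1_{\omega_2}(\CC,\mu_p)\}$, which is intrinsic, and to check the cardinality condition for each of its two members. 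This is precisely the feature that distinguishes the present statement from the coarser bound $\frac{2p}{p-1}$ of Lemma~\ref{lemme pour montrer qu'une couronne est de longueur supérieure à la borne}, where no orientation of the skeleton is needed.
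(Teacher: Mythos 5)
Your proof takes essentially the same route as the paper: reduce to the set-theoretic criterion of Lemma~\ref{lemme pour la meilleure borne}, use Lemma~\ref{lemme de résolubilité du squelette d'une couronne} to identify $S^\an(\CC)_{[2]}$ with $S^\an(\CC)_{\res}$, and invoke the anabelian properties of solvable points from \ref{partie propriétés des points résolubles} to recover the sets $\mathscr{D}(f)_{\res}$, $S^\an(\CC)_{\res}$, and hence the subgroups $H^1_\omega(\CC,\mu_p)$. Your extra care in the final paragraph, observing that only the unordered pair $\{H^1_{\omega_1},H^1_{\omega_2}\}$ is needed because the criterion is symmetric in the two extremities, spells out a point the paper passes over silently, but the argument is the same.
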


\begin{proof}
We showed in lemma \ref{lemme de résolubilité du squelette d'une couronne} that all type-$2$ points of the skeleton of $\CC$ are solvable : $\;S^\an(\CC)_{\res}=S^\an(\CC)_{[2]}.$ Thus : 
$$\bigcap_{f\in H^1_\omega(\mathcal{C},\mu_p)} \mathscr{D}(f)_{[2]}\cap S^\an(\mathcal{C})=\bigcap_{f\in H^1_\omega(\mathcal{C},\mu_p)} \mathscr{D}(f)_{\res}\cap S^\an(\mathcal{C})_{\res}.$$

From the properties of solvable points presented in \ref{partie propriétés des points résolubles}, the tempered group $\pi_1^\tp(\CC)$ characterises the sets $\mathscr{D}(f)_{\res}$ and $S^\an(\mathcal{C})_{\res}$. Moreover, a torsor $f\in H^1(\CC,\mu_p)$ belongs to $H^1_\omega(\CC,\mu_p)$ if and only if it totally splits over the set of type-$2$ points of a non-empty neighbourhood of $\omega$ in $S^\an(\CC)$. But all the type-$2$ points of $S^\an(\CC)$ are solvable, so $H^1_\omega(\CC,\mu_p)$ is itself characterized by the tempered group. The result follows from lemma \ref{lemme pour la meilleure borne}.
\end{proof}

\subsection{Results on lengths of annuli}

We are now in a position to state our result of partial anabelianity of lengths of annuli. Even if we are not yet in a position to know whether the fundamental group of an annulus determines its length, the following result shows that the lengths of two annuli which have isomorphic tempered fundamental groups cannot be too far from each other. When the lengths are finite, we give an explicit bound, depending only on the residual characteristic $p$, for the absolute value of the difference of these lengths.

\begin{thm}\label{théorème sur la longueur des couronnes}
Let $\mathcal{C}_1$ and $\mathcal{C}_2$ be two $k$-analytic annuli whose tempered fundamental groups $\pi_1^\tp(\mathcal{C}_1)$ and $\pi_1^\tp(\mathcal{C}_2)$ are isomorphic. Then $\mathcal{C}_1$ has finite length if and only if $\mathcal{C}_2$ has finite length. In this case : 
$$\vert\ell(\mathcal{C}_1)-\ell(\mathcal{C}_2)\vert < \frac{2p}{p-1}.$$ We also have $d\left( \ell(\mathcal{C}_1),p\N^\times    \right)>1$ if and only if $d\left( \ell(\mathcal{C}_2),p\N^\times    \right)>1$, and in this case : $$\vert\ell(\mathcal{C}_1)-\ell(\mathcal{C}_2)\vert < \frac{p}{p-1}.$$
\end{thm}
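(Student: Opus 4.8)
The strategy is to extract from the tempered group $\pi_1^\tp(\CC)$ two pieces of information about the length $\ell(\CC)$, namely whether $\ell(\CC) > \frac{2p}{p-1}$ (via Lemma \ref{lemme pour montrer qu'une couronne est de longueur supérieure à la borne}) and whether $\ell(\CC) > \frac{p}{p-1}$ (via Corollary \ref{corollaire permettant de caractériser si une couronne est plus grande que le seuil}), and then to bootstrap this into the two stated inequalities by passing to subannuli. First I would handle the dichotomy \emph{finite vs.\ infinite length}: if $\CC$ has infinite length then for every real $L>0$ it contains a subannulus $\CC'$ of length exactly $L$; since $\pi_1^\tp(\CC_1)\simeq\pi_1^\tp(\CC_2)$, and since ``$\ell>\frac{2p}{p-1}$'' is read off the tempered group, the key point is that having a subannulus of arbitrarily large length is itself a tempered-group-theoretic property (one needs to see that the subannulus inclusion is visible at the level of $\pi_1^\tp$, or rather that $\ell(\CC)=\sup$ of lengths of subannuli over which suitable $\mu_p$-torsors of trivial cochain split). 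Concretely: $\ell(\CC)$ finite $\iff$ there is a uniform bound on the lengths $L$ for which $\CC$ admits a subannulus $\CC'$ with $\ell(\CC')>\frac{2p}{p-1}$ behaving as in Lemma \ref{lemme pour montrer qu'une couronne est de longueur supérieure à la borne}; this bounded-ness is preserved by the isomorphism $\pi_1^\tp(\CC_1)\simeq\pi_1^\tp(\CC_2)$.

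Next, assuming both lengths finite, I would prove $|\ell(\CC_1)-\ell(\CC_2)|<\frac{2p}{p-1}$. Set $\ell_i=\ell(\CC_i)$ and suppose WLOG $\ell_1\geqslant\ell_2$. For any $L\in\R_{>0}$ with $L<\ell_1$, the annulus $\CC_1$ contains a subannulus of length $L$; I want the corresponding statement, via the group isomorphism, to force $\CC_2$ to ``almost'' contain a subannulus of length $L$. The clean way: the function ``does $\CC$ have a subannulus of length $>\frac{2p}{p-1}$ on which all trivial-cochain $\mu_p$-torsors split over a piece of the skeleton'' is, by Lemma \ref{lemme pour montrer qu'une couronne est de longueur supérieure à la borne} and Theorem \ref{théorème d'anabélianité de la trivialité des cochaînes}, characterized by $\pi_1^\tp$; running this over all subannuli shows $\pi_1^\tp(\CC)$ determines $\sup\{\,\ell(\CC')-\tfrac{2p}{p-1} : \CC'\subseteq\CC \text{ subannulus}, \ell(\CC')>\tfrac{2p}{p-1}\,\}=\ell(\CC)-\frac{2p}{p-1}$ whenever $\ell(\CC)>\frac{2p}{p-1}$. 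Hence if both lengths exceed $\frac{2p}{p-1}$ they must actually be \emph{equal}; and if (say) $\ell_1\leqslant\frac{2p}{p-1}$ then trivially $|\ell_1-\ell_2|\leqslant\ell_1<\frac{2p}{p-1}$ after checking $\ell_2<\frac{2p}{p-1}$ as well (again a group-theoretic condition). Combining the cases gives the strict bound $\frac{2p}{p-1}$.

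For the refined statement, replace Lemma \ref{lemme pour montrer qu'une couronne est de longueur supérieure à la borne} by Corollary \ref{corollaire permettant de caractériser si une couronne est plus grande que le seuil}: ``$\ell(\CC)>\frac{p}{p-1}$'' is group-theoretic, so by the same subannulus argument $\pi_1^\tp(\CC)$ determines $\ell(\CC)-\frac{p}{p-1}$ whenever the latter is positive. The condition $d(\ell(\CC),p\N^\times)>1$ is designed precisely so that the relevant $\mu_p$-torsor obstructions (splitting over the middle vs.\ the ends of the skeleton, as computed via Proposition \ref{décomposition du torseur sauvage}) behave uniformly; one checks that this condition is equivalent to a group-theoretic statement about the existence of $\mu_p$-torsors with prescribed splitting behaviour near both extremities, hence is transported by the isomorphism, and in that regime the same sup-argument pins down $\ell(\CC)$ up to an error $<\frac{p}{p-1}$, forcing equality when both lengths are $>\frac{p}{p-1}$ and giving the crude bound otherwise.

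\textbf{Main obstacle.} The delicate point is the ``subannulus visibility'' step: one must argue carefully that restricting to a subannulus $\CC'\subseteq\CC$, and the various splitting sets $\mathscr{D}(f)_{[2]}\cap S^\an(\CC)$ for $f\in H^1(\CC,\mu_p)\cap\ker(\theta)$, are all accessible from $\pi_1^\tp(\CC)$ alone — this is where solvability of type-$2$ points of the skeleton (Lemma \ref{lemme de résolubilité du squelette d'une couronne}), the anabelian detection of $\ker(\theta)$ (Theorem \ref{théorème d'anabélianité de la trivialité des cochaînes}), and the properties of solvable points from \S\ref{partie propriétés des points résolubles} must be assembled into a single statement. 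The other subtlety is bookkeeping the boundary cases where one or both lengths sit below the threshold $\frac{2p}{p-1}$ (resp.\ $\frac{p}{p-1}$), to see that the crude bound ``$|\ell_1-\ell_2|<$ threshold'' survives; here strictness of the inequality comes from the strictness in Lemmas \ref{lemme pour montrer qu'une couronne est de longueur supérieure à la borne} and \ref{lemme pour la meilleure borne} together with the fact that $\ell_2<\frac{2p}{p-1}$ (resp.\ $\frac{p}{p-1}$, resp.\ $d(\ell_2,p\N^\times)\leqslant 1$) is forced by the group isomorphism once the same holds for $\CC_1$.
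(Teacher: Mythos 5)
Your proposal and the paper's proof diverge at the crucial step, and yours has a genuine gap.

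The load-bearing move in your argument is ``subannulus visibility'': you want to enumerate subannuli $\CC'\subseteq\CC$ and test, for each, whether $\ell(\CC')>\frac{2p}{p-1}$ (or $>\frac{p}{p-1}$), then take a supremum. But a subannulus is an \emph{open subset}, not a covering, so there is no natural way to see $\pi_1^\tp(\CC')$ inside $\pi_1^\tp(\CC)$; nothing in Theorem~\ref{théorème d'anabélianité de la trivialité des cochaînes} or the solvability results of \S\ref{partie propriétés des points résolubles} gives you access to the tempered groups of arbitrary analytic subdomains. Your conclusion also overshoots: you claim that when both lengths exceed $\frac{2p}{p-1}$ they must be \emph{equal}, which is strictly stronger than the theorem and contradicts the stance of the paper (the introduction explicitly leaves open whether $\pi_1^\tp$ determines the length). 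The very fact that only a bound $\frac{2p}{p-1}$ is obtained should be taken as a hint that no exhaustive family of thresholds is available.

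What the paper does instead is replace subannuli by \emph{Kummer $\mu_n$-coverings with $n$ prime to $p$}. These are coverings, hence genuinely visible: $H^1(\CC,\mu_n)\simeq\Hom(\pi_1^\tp(\CC),\mu_n)$, and a generator yields a connected cover which is an annulus of length $\ell(\CC)/n$ whose tempered group is an open subgroup of $\pi_1^\tp(\CC)$. Applying Corollary~\ref{corollaire permettant de caractériser si une couronne est plus grande que le seuil} to each such cover determines whether $\ell(\CC)/N>\frac{p}{p-1}$, i.e. whether $\ell(\CC)>N\frac{p}{p-1}$, for every $N\in\N^\times$ prime to $p$. This is a \emph{discrete} family of thresholds; the isomorphism $\pi_1^\tp(\CC_1)\simeq\pi_1^\tp(\CC_2)$ forces $\ell(\CC_1)$ and $\ell(\CC_2)$ into the same gap between consecutive admissible thresholds, and the maximal gap width (when a multiple of $p$ is skipped) is $2\frac{p}{p-1}$, which is exactly where the bound comes from. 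Note in particular that Lemma~\ref{lemme pour montrer qu'une couronne est de longueur supérieure à la borne} is not used in the paper's proof at all: the constant $\frac{2p}{p-1}$ arises from the arithmetic of prime-to-$p$ integers, not from that lemma, and the finer bound $\frac{p}{p-1}$ together with the condition $d(\ell(\CC_i),p\N^\times)>1$ singles out the case where $\ell(\CC_i)$ sits in a narrow gap.

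To repair your argument you would have to replace ``subannulus of length $L$'' everywhere by ``prime-to-$p$ Kummer cover of degree $N$'', at which point it collapses onto the paper's proof.
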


\begin{proof}
Let $n\in\N^\times$ prime to $p$, and $i\in \lbrace 1,2 \rbrace$. We know that all $\mu_n$-torsors of $\mathcal{C}_i$ are Kummer. Thus, annuli defined by torsors coming from $H^1(\mathcal{C}_i,\mu_n)$ have length $\frac{\ell(\mathcal{C}_i)}{n}$ (with potentially $\ell(\mathcal{C}_i)=+\infty$).\

Moreover, all the $\mu_n$-torsors of $\mathcal{C}_i$ can be "read" on the tempered group $\pi_1^\tp(\mathcal{C}_i)$ since $H^1(\mathcal{C}_i,\mu_n)\simeq \Hom(\pi_1^\tp(\mathcal{C}_i),\mu_n)$. From corollary \ref{corollaire permettant de caractériser si une couronne est plus grande que le seuil} it is then possible, from $\pi_1^\tp(\mathcal{C}_i)$, to know whether $\frac{\ell(\mathcal{C}_i)}{n}>\frac{p}{p-1}$, and step by step to find the smallest integer $j$ such that : $$\frac{\ell(\mathcal{C}_i)}{n^{j+1}}\leqslant \frac{p}{p-1}<\frac{\ell(\mathcal{C}_i)}{n^j},\;\;\;\;\;\text{i.e. such that }\;\;\;\; n^j\frac{p}{p-1}<\ell(\mathcal{C}_i)\leqslant n^{j+1}\frac{p}{p-1}.$$ But the tempered groups of these two annuli are isomorphic, so such a $j$ will be the same for $\mathcal{C}_1$ and $\mathcal{C}_2$. In particular, for any $N\in \N^\times$ \textit{prime to $p$} : 
$$N\frac{p}{p-1}<\ell(\mathcal{C}_1) \iff N\frac{p}{p-1}<\ell(\mathcal{C}_2),$$
which leads to the conclusion.
\end{proof}

\newpage

\end{document}